\let\pa\partial
\let\na\nabla
\let\eps\varepsilon
\newcommand{\N}{{\mathbb N}}
\newcommand{\R}{{\mathbb R}}
\newcommand{\diver}{\operatorname{div}}
\newcommand{\dd}{{\mathrm d}}
\newcommand{\Prob}{\mathbb{P}}
\newcommand{\E}{\mathbb{E}}
\newtheorem{theorem}{Theorem}
\newtheorem{lemma}[theorem]{Lemma}
\newtheorem{remark}[theorem]{Remark}
\newtheorem{corollary}[theorem]{Corollary}
\begin{document}

\title[Fluctuations in the moderate regime]{Fluctuations around the mean-field limit for attractive Riesz potentials in the moderate regime}

\author[L. Chen]{Li Chen}
\address{University of Mannheim, School of Business Informatics and Mathematics, 68131 Mann\-heim, Germany}
\email{chen@math.uni-mannheim.de}

\author[A. Holzinger]{Alexandra Holzinger}
\address{Mathematical Institute, University of Oxford, Andrew Wiles Building, Radcliffe Observatory Quarter, Woodstock Road, Oxford, United Kingdom OX2 6GG}
\email{alexandra.holzinger@maths.ox.ac.uk} 

\author[A. J\"ungel]{Ansgar J\"ungel}
\address{Institute of Analysis and Scientific Computing, Vienna University of Technology, Wiedner Hauptstra\ss e 8--10, 1040 Wien, Austria}
\email{juengel@tuwien.ac.at} 

\date{\today}

\thanks{The first author acknowledges partial support from the German Research Foundation (DFG), grant CH 955/8-1. The last author acknowledges partial support from the Austrian Science Fund (FWF), grants P33010 and F65. This work has received funding from the European Research Council (ERC) under the European Union's Horizon 2020 research and innovation programme, ERC Advanced Grant Nonlocal-CPD with no.~883363 and ERC Advanced Grant NEUROMORPH with no.~101018153.} 

\begin{abstract}
A central limit theorem is shown for moderately interacting particles in the whole space. The interaction potential approximates singular attractive or repulsive potentials of sub-Coulomb type. It is proved that the fluctuations become asymptotically Gaussians in the limit of infinitely many particles. The methodology is inspired by the classical work of Oelschl\"ager on fluctuations for the porous-medium equation. The novelty in this work is that we can allow for attractive potentials in the moderate regime and still obtain asymptotic Gaussian fluctuations. The key element of the proof is the mean-square convergence in expectation for smoothed empirical measures associated to moderately interacting $N$-particle systems with rate $N^{-1/2-\eps}$ for some $\eps>0$. To allow for attractive potentials, the proof uses a quantitative mean-field convergence in probability with any algebraic rate and a law-of-large-numbers estimate as well as a systematic separation of the terms to be estimated in a mean-field part and a law-of-large-numbers part.
\end{abstract}

\keywords{Mean-field limit, moderately interacting particle systems, 
mean-square convergence, law of large numbers, central limit theorem, intermediate fluctuations.}  
 
\subjclass[2000]{35Q70, 35Q92, 60J70, 82C22.}

\maketitle


\setcounter{tocdepth}{1}
\tableofcontents

\section{Introduction}

Systems of stochastic interacting particles arise in many applications like mathematical physics, population biology, and sociology. If the number of particles is very large, the numerical computation is very costly. Therefore, it is reasonable to determine the particle dynamics in the (mean-field) limit of infinitely many particles. Depending on the strength of interaction, the singularity of the interaction potential, and the type of force (repulsive or attractive), various convergence results of the empirical measure towards a deterministic PDE solution can be derived. This can be seen as a law-of-large-numbers result for interacting particles of mean-field type. 

One might ask for a stochastic correction of this deterministic limit, in particular whether a central limit theorem can be recovered in the large population limit. In this paper, we show that asymptotically, the fluctuations around the deterministic mean-field limit become Gaussian, where the variance is determined by the dual backward time-evolution of the linearized limiting nonlinear PDE. To prove this result, the mean-square convergence in expectation with algebraic rate for smoothed empirical measures for moderately interacting particles with attractive sub-Coulomb potentials is shown. Compared to the previous works \cite{Oel87, JoMe98}, which are concerned with fluctuations in the moderate mean-field regime, we allow for aggregating particles (for sub-coulomb potentials) and a central limit theorem scaling of the fluctuations process with $\sqrt{N}$, where $N$ denotes the number of particles. 

The technical novelties of this paper are as follows. First, we prove $L^2(\R^d)$ estimates of the smoothed empirical measure using a convergence-in-probability result with any algebraic rate, inspired by works of Pickl; see \cite{HLP20,LaPi17}. Compared to these works, we do not use a cut-off of the interaction kernel and purely work with mollified potentials. Second, we use the $L^2(\R^d)$ estimates to derive an asymptotic central limit theorem for moderate particle systems of aggregating or repulsive types, following ideas of Oelschl\"ager \cite{Oel87} developed for the porous medium equation.

\subsection{The setting}

We consider a system of $N$ interacting particles with positions $X_i^\eta(t)$, satisfying the stochastic differential equations
\begin{equation}\label{1.X}
\begin{aligned}
  \dd X_i^\eta(t) &= \frac{\kappa}{N}\sum_{j=1}^{N}\na V^\eta
  (X_i^\eta(t)-X_j^\eta(t))\dd t + \sqrt{2\sigma}\dd W_i(t), 
  \quad t>0, \\
  X_i^\eta(0) &= \zeta_i \quad\mbox{in }\R^d,\ i=1,\ldots,N.
\end{aligned}
\end{equation}
Here, the space dimension is $d\ge 3$, the parameter $\kappa=\pm 1$ describes the type of interaction ($\kappa=1$ for aggregation and $\kappa=-1$ for repulsion), $\sigma>0$ is the diffusion constant, $W_i$ are independent Brownian motions on $\R^d$, and $\zeta_1,\ldots,\zeta_N$ are independent, identically distributed (i.i.d.) random variables with the common probability density function $u_0$. The potential $V^\eta$ is given by
\begin{equation}\label{1.Veta}
  V^\eta = \chi^\eta*\Phi, \quad \Phi(x)=|x|^{-\lambda},
\quad 0<\lambda<d-2,
\end{equation}
where $\chi^\eta(x)=\eta^{-d}\chi(|x|/\eta)$ is a smooth mollification kernel with $\chi=\xi*\xi$ and $\xi\in C_0^\infty(\R^d)$. We refer to Section \ref{sec.main} for the complete list of assumptions. The function $\Phi$ is a Riesz kernel, and the condition $0<\lambda<d-2$ refers to potentials of sub-Coulomb type. We could treat Coulomb potentials with $\lambda=d-2$ for our convergence in probability result (Theorem \ref{thm.prob}), but only using a suitable cut-off, which does not allow us to show a fluctuation theorem; see Remark \ref{rem.coulomb2}. However, we can allow for general potentials $V^\eta$ satisfying suitable growth conditions with respect to $\eta$; see Remark \ref{rem.general}. As in \cite{Oel87}, a key assumption in this paper is that the potential is a convolution square, i.e., there exists $\Psi$ such that $\Phi=\Psi*\Psi$. For the Riesz kernel in \eqref{1.Veta}, this assumption is satisfied; see Lemma \ref{lem.Psi}. 

In the so-called moderate regime, the mollification parameter $\eta$ is coupled to the number of particles $N$ such that $\eta(N)\to 0$ if $N\to \infty$. Standard choices of $\eta$ are $N^{-\beta}$ or $\log(N)^{-\beta}$ for suitable ranges of $\beta$; see the classical work \cite{Oel85} as well as \cite{MeRo87,JoMe98, CDHJ21,ORT23, KOdeS24} and references therein. It is known \cite{ORT23} that in the moderate regime $\eta=N^{-\beta}$ for some $0<\beta<1/(2d-2)$, particle system \eqref{1.X} propagates chaos in the many-particle limit $N\to\infty$ towards the non-linear SDE system
\begin{equation*}
\begin{aligned}
  \dd X_i(t) &= \kappa\nabla \Phi \ast u(t, X_i(t)) \dd t 
  + \sqrt{2\sigma} \dd W_i(t), \quad t>0, \\
  X_i(0) &= \zeta_i \quad\mbox{in }\R^d,\ i=1,\ldots,N,
\end{aligned}
\end{equation*} 
where $u$ is the law of $X_i$ for all $i=1,\ldots,N$, solving the problem
\begin{equation}\label{1.u} 
  \pa_t u = \sigma\Delta u - \kappa\diver(u\na\Phi*u), \quad t>0, \quad
  u(0)=u_0\quad\mbox{in }\R^d
\end{equation} 
in the weak sense. Indeed, introducing the empirical measure
\begin{equation}\label{1.mu}
  \mu^\eta(t) = \frac{1}{N}\sum_{i=1}^N\delta_{X_i^\eta(t)},
  \quad t>0,
\end{equation}
where $\delta$ is the Dirac delta distribution, the work \cite{ORT23} shows that $\mu^\eta(t)$ converges towards the PDE solution $u$ of \eqref{1.u}.

The main aim of this paper is to determine the limiting asymptotic behavior of the fluctuations process 
\begin{align}\label{1.fluctuation}
\mathcal{F}^\eta(t):=\sqrt{N}(\mu^\eta(t)-\bar{u}^\eta(t))
\end{align} as $N\to \infty, \eta = N^{-\beta} \to 0$ for some $\beta >0$, where $\bar{u}^\eta$ is the unique solution to the so-called intermediate nonlocal diffusion equation
\begin{equation}\label{1.baru}
  \pa_t \bar{u}^\eta = \sigma\Delta\bar{u}^\eta - \kappa\diver
  (\bar{u}^\eta\na V^\eta*\bar{u}^\eta),\quad t>0, \quad
  \bar{u}^\eta(0) = u_0\quad\mbox{in }\R^d.
\end{equation} 
This PDE can be seen as the mean-field limit in the weak interaction regime, i.e.\ in the large population limit $N\to \infty$ but for fixed $\eta>0$ independent of $N$. The associated intermediate particle description for fixed $\eta >0$ reads as
\begin{equation}\label{1.barX}
\begin{aligned}
  \dd\bar{X}_i^\eta(t) &= \kappa\big(\bar{u}^\eta*\na V^\eta(t,\bar{X}_i^\eta(t))\big)dd t 
  + \sqrt{2\sigma}\dd W_i(t), \quad t>0, \\
  \bar{X}_i^\eta(0) &= \zeta_i\quad\mbox{in }\R^d,\ i=1,\ldots,N.
\end{aligned}
\end{equation}
We notice that all positions $\bar{X}_i^\eta$ are independent and possess the common density function $\bar{u}^\eta$. Equations \eqref{1.barX} still depend on the particle number $N$ via the interaction radius $\eta=N^{-\beta}$.

The choice of the fluctuations process in \eqref{1.fluctuation} with respect to $\bar{u}^\eta$ rather than $u$ is motivated by the work of Oelschl\"ager \cite{Oel87}. He showed that the corrected fluctuations process $\widetilde{\mathcal{F}}^\eta=\sqrt{N}(\widetilde{\mu}^\eta-\widetilde{u}+C_N)$ recovers Gaussian behavior in the limit $N\to\infty$, where $\lim_{N\to\infty}C_N(x,t)=0$, $\widetilde{u}$ solves the viscous porous medium equation, and $\widetilde{\mu}^\eta$ denotes the corresponding empirical measure of the interacting particle system. The correction $C_N$ is needed since convergence towards the viscous porous medium equation $\widetilde{u}$ is expected to be too slow to compensate the $\sqrt{N}$ scaling. In the present article, in comparison to \cite{Oel87}, we do not use an asymptotic expansion to derive the deterministic correction $C_N$, but we are able to compare directly with the solution $\bar{u}^\eta$ of the nonlocal equation \eqref{1.baru}. The correction $C_N$ used by Oelschl\"ager corresponds to the PDE error $\bar{u}^\eta-u$ in the present work.

A different approach for fluctuations in the moderate regime was suggested by Jourdain and M\'el\'eard \cite{JoMe98}, where a scaling of the form $(\log N)^{r}$ for some $r>0$ instead of $\sqrt{N}$ is used to define the fluctuations process. For this scaling, only a deterministic limit can be expected. To the best of the authors knowledge, \cite{JoMe98, Oel87} are the only works which consider fluctuations in the moderate mean-field regime.

Our main result (Theorem \ref{thm.fluct}) is a central limit theorem for the intermediate fluctuations process $\mathcal{F}^\eta$ defined in \eqref{1.fluctuation}. If this process converges to some limit process $\mathcal{F}$, it can be heuristically seen as a correction of the mean-field behavior, since
$$
  \mu^\eta(t) = u(t) + (\bar{u}^\eta-u)(t) + N^{-1/2}\mathcal{F}^\eta(t) 
  \sim u(t) + N^{-1/2}\mathcal{F}(t),
$$
using that $\bar{u}^\eta \to u$ for $\eta \to0$. This means that the particle dynamics can be captured for sufficiently large $N$ by the mean-field limit $u(t)$ plus some stochastic correction $\mathcal{F}$ with scaling $N^{-1/2}$. If $\mathcal{F}$ is a Gaussian process, this corresponds to a central limit theorem. Indeed, Theorem \ref{thm.fluct} below shows that the limiting behavior is asymptotically Gaussian, in particular
\begin{align}\label{1.fluctuations}
  \langle \mathcal{F}(t), \phi \rangle 
  \sim \mathcal{N}\bigg(0,\langle u_0,(T^t_\phi (0))^2 \rangle 
  - \langle u_0, T^t_\phi (0)\rangle^2 
  + 2\sigma\int_{0}^t \langle u(s), |\nabla T^t_\phi (s)|^2\rangle 
  \dd s\bigg),
\end{align}
where $u$ solves \eqref{1.u} and $T^t_\phi$ is the unique solution to the dual backward problem of the linearized version of \eqref{1.u}, i.e., it solves for fixed $t>0$ and test function $\phi$ the equation
\begin{align}\label{1.back}
  -\pa_s v - \sigma\Delta v 
  = \kappa(\na\Phi*u)\cdot\na v - \kappa\na\Phi*(u\na v)
  \quad\mbox{in }\R^d,\ s\in(0,t), \quad v(t)=\phi.
\end{align} 
We refer the reader to Lemma \ref{lem.estT} for existence and uniqueness results as well as regularity estimates for \eqref{1.back}. The proof that the laws of $\langle\mathcal{F}^\eta,\phi\rangle$ are asymptotically normally distributed is inspired by Oelschl\"ager \cite[Sec.~3]{Oel87}. He proved a related result for the porous medium equation, i.e.\ with potential $\Phi = \delta_0$ and $\kappa=-1$, assuming conditions on the Fourier transform of $V^\eta$ and its derivatives. We do not need such conditions and can include attractive potentials. However, our framework is currently limited to Riesz kernels and does not allow for attractive Dirac kernels in the limit. It is an open question whether a central limit theorem can be recovered also in that case.

To derive the fluctuations result, we need a convergence rate of order $N^{-1/2-\eps}$ with $\eps>0$ in $L^\infty((0,T); L^2(\R^d)) \cap L^2((0,T); H^1(\R^d))$ norm of the smoothed empirical measure towards the smoothed intermediate PDE solution; see Theorem \ref{thm.L2} below. The necessity of this type of convergence can be formally understood from the stochastic differential equation for $\mathcal{F}^\eta$ in the weak form 
\begin{align}\label{1.flucSDE}
  \dd\langle\mathcal{F}^\eta,\psi\rangle 
  &= \frac{\sqrt{2\sigma}}{\sqrt{N}}
  \sum_{i=1}^N\na\psi(X_i^\eta)\cdot\dd W_i 
  + \sqrt{N}R(t)\dd t,
\end{align}
where $\psi$ is a test function and $R(t)$ an error term depending on the difference $\mu_N - \bar{u}^\eta$; see Lemma \ref{lem.dF}. In the limit $N\to\infty$, the error term $R(t)$ can be shown to converge to zero, since a smoothed version of the empirical measure converges in $L^2(\R^d)$ norm faster than $N^{-1/2}$ towards a smoothed version of $\bar{u}^\eta$, which is guaranteed by our second result, Theorem \ref{thm.L2}.

The {\em key technical step of this work} (Theorem \ref{thm.L2}) is the mean-square convergence of a smoothed empirical measure associated to \eqref{1.X} towards the smoothed solution to the intermediate equation \eqref{1.barX}. More precisely, let $Z^\eta=\xi^\eta*\Psi$ such that $V^\eta=Z^\eta*Z^\eta$, where $\xi^\eta(x)=\eta^{-d}\xi(|x|/\eta)$. The smoothed quantities are
\begin{equation*}
  f^\eta:= Z^\eta*\mu^\eta, \quad g^\eta := Z^\eta*\bar{u}^\eta.
\end{equation*}
Choosing $\beta>0$ in an appropriate range, for any $T>0$, there exist $\eps>0$ and $C>0$ such that
\begin{equation*}
  \E\bigg(\sup_{0<t<T}\|(f^\eta-g^\eta)(t)\|_{L^2(\R^d)}^2
  + \sigma\int_0^T\|\na(f^\eta-g^\eta)(t)\|_{L^2(\R^d)}^2\dd t\bigg)
  \leq CN^{-1/2-\eps}.
\end{equation*}
As mentioned above, in the limit $\eta=N^{-\beta}\to 0$, the function $\bar{u}^\eta$ converges to the solution to 
\begin{equation*} 
  \pa_t u = \sigma\Delta u - \kappa\diver(u\na\Phi*u), \quad t>0, \quad
  u(0)=u_0\quad\mbox{in }\R^d;
\end{equation*} 
see Lemma \ref{lem.wLinfty}, which implies the convergence of $f^\eta$ towards $\Psi*u$. Observe that formally $g^\eta$ and $f^\eta$ converge to $\Psi*u$, not to $u$, because of the smoothing with the interaction kernel $V^\eta$. This convolution structure appears naturally because of the double convolution structure of $V^\eta$ in the estimates of the fluctuations process; see \eqref{1.flucSDE}. The structure of the $L^2(\R^d)$ norm of $f^\eta-g^\eta$ corresponds to the modulated energy used by Serfaty and co-authors \cite{Ser17,Ser20,Ser24} for singular repulsive kernels. However, in the present setting, for fixed $N$, our interaction kernel is smooth.

The convergence $\mu^\eta \to u$ for particle system \eqref{1.X} in Lebesgue spaces was proved in \cite{ORT23}. 
In the proof of the fluctuation result (Theorem \ref{thm.fluct}), we show the result of Oelschl\"ager \cite{Oel87}, where only the repulsive regime is admissible, for aggregating equations of sub-Coulomb type. In order to allow for attractive regimes, we use a convergence result in probability with arbitrary algebraic convergence rate, which is guaranteed by our third main result. (We refer the reader to Section \ref{sec.key} for a more detailed explanation of the main ideas for the proof of Theorem \ref{thm.L2}.)

Our {\em preparatory result} (Theorem \ref{thm.prob}) for Theorem \ref{thm.L2} concerns the convergence in probability of $X_i^\eta-\bar{X}_i^\eta$: If $\beta>0$ is sufficiently small and $0<\alpha<1/2$ suitably chosen, for any $\gamma>0$ and $T>0$, there exists $C(\gamma,T)>0$ such that for all $0<t<T$,
\begin{align}\label{1.propab}
  \Prob\Big(\max_{i=1,\ldots,N}|X_i^\eta(t)-\bar{X}_i^\eta(t)|
  >N^{-\alpha}\Big) \le C(\gamma,T)N^{-\gamma}.
\end{align}
This provides a convergence result with any algebraic rate. For this result, we need the sub-Coulomb condition $\lambda<d-2$; see Section \ref{sec.key} for more details. The proof uses ideas from \cite{HLP20,LaPi17}, where the Coulomb potential with a cutoff was considered. The main idea relies on a Gronwall-type argument and a law-of-large-numbers estimate; see Lemma \ref{lem.law}. {Existing (quantitative) convergence results \cite{KOdeS24,ORT23,Tom23} concerning particle system \eqref{1.X} do not cover \eqref{1.propab}, since we need an arbitrary algebraic convergence rate with a cutoff $N^{-\alpha}$ depending on the number of particles for $0< \alpha < 1/2$.

\subsection{State of the art}

Since the seminal work of Kac \cite{Kac56} on the many-particle limit of the spatially homogeneous Boltzmann equation, many works have been devoted to interacting particle systems and the associated mean-field limit. Depending on the concrete situation, there exist many types of mean-field limits. Because of the extensive literature, we mention only some of the relevant works and refer to \cite{ChDi22,ChDi22a,JaWa17} for more references.

The fluctuations problem around a limiting mean-field PDE was investigated already in the 1970s and 1980s focusing on jump-type particle systems \cite{McK75,Tan82}. One of the earliest works on fluctuations of interacting diffusions is due to It\^o \cite{Ito83} for one-dimensional Brownian motions, and the limit of the corresponding fluctuations is a Gaussian process. Fluctuations in the path space were first obtained by Tanaka and Hitsuda \cite{TaHi81}, later generalized in \cite{Szn84}. 
It is shown in \cite{FeMe97} that the fluctuations for weakly interacting diffusions are governed by a linear stochastic PDE. Another earlier result is due to Dittrich \cite{Dit88}, who proved the convergence in law of the fluctuations process $\widetilde{\mathcal{F}}^\eta=\sqrt{N}(\mu^\eta-F)$, where $F$ solves a reaction-diffusion equation in the setting of Poisson initial distributions. The effect of spatial damping on the interactions is investigated in \cite{LuSt16}, showing that if the damping is sufficiently weak, the fluctuations remain Gaussian. While the paper \cite{FeMe97} requires smooth kernels, this condition was extended to singular kernels on the torus, including the two-dimensional Biot--Savart and Coulomb kernels, in \cite{WZZ23}. Gaussian fluctuations for singular Coulomb potentials in repulsive settings were studied in \cite{Ser24, BLS18} under some constrains on the dimension by the use of energy estimates. Recently, in \cite{GGK22} a central limit theorem was shown for stochastic gradient decent models in the mean-field setting under smoothness assumptions on the given data of the mean-field equation.

As already mentioned, the fluctuations process around the mean-field limit for \textit{moderately interacting particles} was studied by Oelschl\"ager in \cite{Oel87} in the porous medium setting, i.e.\ $\kappa=-1$ and $\Phi = \delta_0$, as well as by Jourdain and M\'el\'eard in \cite{JoMe98} in a more general setting but still for $\Phi = \delta_0$, however, without the central limit theorem scaling $\sqrt{N}$. Oelschl\"ager showed that the corrected fluctuations process has a Gaussian behavior in the limit $N\to\infty$. Later, Jourdain and M\'el\'eard \cite{JoMe98} studied the fluctuations around the mean-field limit for moderately interacting particles but with a logarithmic scaling instead of $\sqrt{N}$, such that no Gaussian behavior can be obtained.  The authors are not aware of any other work concerning fluctuations for moderately interacting particles. 

Additionally, it seems that no fluctuations result is available in the literature for singular attractive mean-field interaction in the moderate or weak interaction regime.}When finalizing this draft, we came across an announcement in \cite{RS24} regarding a central limit theorem for logarithmic and Riesz flows in the whole space, which is currently in preparation.

In the following, we give a short summary of mean-field limits related to our setting. 

First, we focus on moderately interacting mean-field limits.  Oelschl\"ager introduced in \cite{Oel84,Oel85,Oel87} the concept of moderately interacting particles for gradient systems in the repulsive case, $\Phi =\delta_0$ and $\eta =N^{-\beta}$ with $0<\beta<1/(d+2)$, followed by classical works of M\'el\'eard and co-authors \cite{JoMe98,MeRo87} as well as Sznitman \cite{Szn84, Szn91}. In 2000, Stevens \cite{Ste00} derived a chemotaxis model with an algebraic connection between $\eta$ and $N$. Later, Figalli and Philipowski \cite{FP08} used the ideas of moderately interacting particles to derive the porous medium equation with $\eta = (\log N)^{-\nu}$ for some $\nu >0$.  Recently, moderate regimes have been used to derive cross-diffusion models; see \cite{CG24,CDHJ21,CDJ19,LCZ}. A quantitative propagation-of-chaos result for particle system \eqref{1.X} was shown in \cite{ORT23} using a semigroup approach developed by Flandoli and co-authors \cite{FLO19} in the moderate regime. Additionally, in a slightly different setting, the work \cite{ORT20} also considers interaction kernels which approximate Riesz potentials.

Second, the choice of $\kappa$ distinguishes the type of force. The repulsive case $\kappa=-1$ was first studied by Oelschl\"ager \cite{Oel85,Oel87} for moderately interacting particles, $\Phi = \delta_0$. Under the assumption that the regularized kernel $\na V^\eta$, which approximates a Dirac delta, is Lipschitz continuous for fixed $\eta>0$, it is well known that propagation of chaos holds; see, e.g., \cite{JoMe98,MeRo87,Szn91}. However, except for \cite{Oel87}, these results require either a logarithmic scaling of $\eta$ or they are not quantitative. The first result for aggregating forces $\kappa=1$ is due to Stevens \cite{Ste00}, who proved the mean-square convergence in probability for smoothed quantities. The (stronger) mean-square convergence in the path space under a (weaker) logarithmic scaling between $\eta$ and $N$ was derived in \cite{CGK18}. Recently, in \cite{ORT20, ORT23} quantitative results for Riesz kernels in the moderate setting for $\kappa=\pm 1$ were obtained.

Third, physical applications often require a singular potential. Earlier works were concerned with the derivation of Navier--Stokes equations from particle systems with logarithmic potentials in two space dimensions \cite{MaPu82,Osa86}. Propagation of chaos for the Landau equation with moderately soft potentials was shown in \cite{FoHa16}. Coulomb potentials (with cutoff) are allowed in the derivation of the Vlasov--Poisson--Fokker--Planck equation in $\R^3$ in the weakly interacting regime \cite{CCS19,HLP20}. The mean-field limit for weakly interacting particles with (repulsive) Coulomb or (attractive) Newton potentials towards the Vlasov--Poisson system was proved in \cite{LaPi17}. The results of Jabin and Wang \cite{JaWa16} were extended to Riesz potentials \cite{Due16} and to Coulomb as well as super-Coulomb potentials \cite{Ser20} using the modulated energy argument inspired by \cite{Ser17}. The modulated energy of Serfaty \cite{Ser17} and the relative entropy of Jabin and Wang \cite{JaWa16} were recently combined by Bresch et al.\ \cite{BJW23} to treat Coulomb potentials via the modulated free energy method. Very recently, a combination of modulated energy and relative entropy methods was used in \cite{RS24} to derive a quantitative convergence result on the whole space for Riesz potentials. A related work is \cite{BCG23}, where the $L^1(\R)$ convergence of the primitive of the empirical measure including time-dependent weights was proved for opinion dynamics with Coulomb interactions. More general noise terms (white in time and colored in space) were considered in \cite{GuLu23} for Biot--Savart or repulsive Poisson kernels.

Fourth, the results differ according to the type of convergence:
convergence in law of the sequence of empirical measures, e.g.\ in \cite{Oel85,MeRo87} (which is equivalent to the propagation of chaos in the sense of \cite[Prop.~2.2]{Szn91}); convergence in probability of the empirical measures \cite{Ste00,Tom23}; convergence in probability in the $p$-Wasserstein distance of probability measures \cite{LaPi17} (see also the review \cite{CCP17} for second order systems); mean-square convergence in expectation of smoothed empirical measures \cite{Oel87,Oel90};  mean-square convergence in expectation of the stochastic processes \cite{CDHJ21,CDJ19}; convergence of the marginals of the $N$-particle distribution in the $L^1(\R^d)$ norm \cite{JaWa16}; convergence of the empirical measures in the relative entropy \cite{CHH23}. Tightness results, not providing convergence rates, were employed in \cite{JTT18,Tom23} to prove the convergence of the empirical measures in the weak sense. Further details and references can be found in \cite{ChDi22a}. 

Fifth, the interaction strength $\eta$ may depend on the particle number in a logarithmic way ($\eta^{-\beta}=\log N$ for some $\beta>0$) or in an algebraic way ($\eta=N^{-\beta}$). In this article, we consider the more natural algebraic rate, leading to a central limit theorem for moderate regimes.

Recent works are concerned with {\em quantitative} convergence results for attractive, singular kernels in the moderately interacting regime. The convergence in probability for the Newtonian $N$-particle flow towards the lifted effective flow for any algebraic rate was shown in \cite{FePi23}. The proof is based on a Gronwall estimate for the maximal distance between the exact microscopic and the mean-field dynamics. The $L^p(\R^d)$ convergence of smoothed empirical measures was proved in \cite{ORT20,ORT23} with the rate $N^{-\gamma}$ for some $\gamma<1/(d+2)$, whose precise value depends on $p$ and the singularity of the (super-Coulomb) potential. The proof uses semigroup theory and the regularity of stochastic convolution integrals.

Finally, we mention the recent papers on small noise fluctuations of solutions to Dean--Kawasaki stochastic PDEs. It was shown in \cite{CoFi23,CFIR23} that the law of fluctuations predicted by a discretized Dean--Kawasaki equation approximates the law of fluctuations of the original weakly interacting particle system. The convergence in probability of density fluctuations associated to a Dean--Kawasaki equation towards the solution to a linearized Langevin equation was proved in \cite{ClFe23}. 

Our work combines some features of the literature: We allow for attractive, singular potentials and obtain the strong notion of mean-square convergence in expectation of order $N^{-1/2-\eps}$ needed for a central limit theorem to hold. We prove the convergence of smoothed empirical measures associated to \eqref{1.X} towards smoothed solutions to the nonlocal diffusion equation \eqref{1.baru}, which implies a fluctuations result and shows that the fluctuations become asymptotically Gaussian in the large population limit $N\to \infty$. The potentials need to be of sub-Coulomb type. The originality of our results is that we determine an explicit algebraic convergence rate sufficient to study fluctuations around the mean-field limit. 

\subsection{Main results}\label{sec.main}

We impose the following assumptions.

\begin{itemize}
\item[(A1)] Parameters: $d\ge 3$, $\sigma>0$, $\kappa=\pm 1$, $T>0$.
\item[(A2)] $W_1,\ldots,W_N$ are independent $d$-dimensional Brownian motions on the filtered probability space $(\Omega,\mathscr{A},(\mathscr{A}_t)_{t\ge 0},\Prob)$, and the initial data $\zeta_1,\ldots,\zeta_N$ are $\mathscr{A}^0$-measurable independent and identically distributed (i.i.d.) square-integrable random variables with the common density function $u_0 \in L^1(\R^d)\cap L^\infty(\R^d)$, where $\mathscr{A}^0\subset \mathscr{A}$ is independent of $\mathscr{A}_t$ for $t\geq 0.$
\item[(A3)] Sub-Coulomb potential: $\Phi(x)=|x|^{-\lambda}$ for $x\in\R^d$ with $0<\lambda<d-2$.
\item[(A4a)] Initial data: Let the initial data $u_0$ fulfill $\|u^0\|_{L^{p^*}(\R^d)}< C(p^*)$, where $p^*=d/(d-\lambda)$ and $C(p^*)>0$ depends on $\sigma$, $d$, $\lambda$, and the constants of some Sobolev inequalities; see Lemma \ref{lem.wLinfty}.
\item[(A4b)] Stronger assumption on the initial data: $u_0\in W^{K,1}(\R^d)$ $\cap W^{K,\infty}(\R^d)$ (needed in Theorem \ref{thm.fluct}) with $\Vert u_0 \Vert_{L^{p*}(\R^d)} < C(p^*)$, where $K>1/(2\beta)$.
\end{itemize}

Let $\xi\in C_0^\infty(\R^d)$ be a radially symmetric probability density function, i.e.\ $\xi(x)=\xi(|x|)$. We introduce the mollification kernel $\xi^\eta(x)=\eta^{-d}\xi(x/\eta)$ for $x\in\R^d$ and set $\chi= \xi*\xi$. A simple computation shows that $\chi^\eta = \xi^\eta*\xi^\eta$. We set $V^\eta:=\chi^\eta*\Phi$. Then we can write
\begin{equation*}
  V^\eta = Z^\eta*Z^\eta,\quad\mbox{where }Z^\eta = \xi^\eta*\Psi,
\end{equation*}
where $\Psi(x)=c_{d,\lambda}|x|^{-(\lambda+d)/2}$ for some constant $c_{d,\lambda}\in \R$; see Lemma \ref{lem.Psi}. Our first result concerns the convergence in probability with any algebraic rate.

\begin{theorem}[Convergence in probability]
\label{thm.prob}
Let Assumptions (A1)--(A4a) hold and let
$$
  0 < \beta < \frac{1}{4\lambda+12}, \quad
  \beta(\lambda +3) < \alpha < \frac12 - \beta(\lambda+1).
$$
Let $(X_i^\eta)_{i=1,\ldots,N}$ and $(\bar{X}_i^\eta)_{i=1,\ldots,N}$ be the solutions to systems \eqref{1.X} and \eqref{1.barX}, respectively. Then for any $\gamma>0$ and $T>0$, there exists a constant $C(\gamma,T)>0$ such that for all $0<t<T$ and $N\in\N$,
\begin{equation*}
  \Prob\Big(\max_{i=1,\ldots,N}|X_i^\eta(t)-\bar{X}_i^\eta(t)|
  > N^{-\alpha}\Big) \le C(\gamma,T)N^{-\gamma}.
\end{equation*}
\end{theorem}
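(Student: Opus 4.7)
I would follow a stopping-time/Gronwall strategy in the spirit of Lazarovici--Pickl \cite{LaPi17} and Huang--Liu--Pickl \cite{HLP20}, adapted to the cut-off-free setting. Because both systems share the same Brownian motions $W_i$ and the same initial data $\zeta_i$, the coupling error $D_i(t) := X_i^\eta(t) - \bar X_i^\eta(t)$ has no stochastic part: $D_i(0) = 0$ and $\dot D_i(t) = \kappa(A_i(t) + B_i(t))$, where
\begin{equation*}
  A_i(t) := \frac{1}{N}\sum_{j=1}^{N}\bigl[\nabla V^\eta(X_i^\eta-X_j^\eta) - \nabla V^\eta(\bar X_i^\eta-\bar X_j^\eta)\bigr](t)
\end{equation*}
is the ``mean-field'' Lipschitz part and
\begin{equation*}
  B_i(t) := \frac{1}{N}\sum_{j=1}^{N}\nabla V^\eta(\bar X_i^\eta-\bar X_j^\eta)(t) - (\nabla V^\eta*\bar u^\eta)(t,\bar X_i^\eta(t))
\end{equation*}
is a centered empirical average whose summands are i.i.d.\ with density $\bar u^\eta$ conditionally on $\bar X_i^\eta$. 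This systematic separation into a ``mean-field'' contribution and a ``law-of-large-numbers'' contribution is what the abstract refers to.

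\textbf{Effective Lipschitz control of $A_i$ on a stopping-time event.} Define $\tau^N := \inf\{t \ge 0 : \max_i |D_i(t)| > N^{-\alpha}\}$, and write $A_i = \frac{1}{N}\sum_j M_{ij}\,(D_i-D_j)$ with $M_{ij}(t) = \int_0^1 \nabla^2 V^\eta(\bar X_i^\eta - \bar X_j^\eta + s(D_i - D_j))\,ds$. The naive bound $\|\nabla^2 V^\eta\|_{L^\infty}\sim\eta^{-(\lambda+2)}$ is too large, so I pass to an empirical average. On $\{t\le\tau^N\}$ the shift is at most $2N^{-\alpha}$, and Taylor expansion combined with $\|\nabla^3 V^\eta\|_{L^\infty} \le C\eta^{-(\lambda+3)} = CN^{\beta(\lambda+3)}$ gives
\begin{equation*}
  |M_{ij}(t)| \le |\nabla^2 V^\eta(\bar X_i^\eta(t)-\bar X_j^\eta(t))| + CN^{\beta(\lambda+3)-\alpha},
\end{equation*}
so the second term is uniformly bounded thanks to $\alpha > \beta(\lambda+3)$. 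For the first term, the \emph{sub-Coulomb} condition $\lambda < d-2$ forces $|\nabla^2 \Phi|\in L^1_{\mathrm{loc}}(\R^d)$ and, combined with the $L^1 \cap L^\infty$ bound on $\bar u^\eta$ uniformly in $\eta$ from Lemma \ref{lem.wLinfty}, gives the \emph{$\eta$-uniform} estimate $\||\nabla^2 V^\eta| * \bar u^\eta\|_{L^\infty} \le C$ by an elementary split-integral argument. Applying the law-of-large-numbers estimate of Lemma \ref{lem.law} to the empirical average $\frac{1}{N}\sum_j|\nabla^2 V^\eta(\bar X_i^\eta-\bar X_j^\eta)|$ then produces an event $\Omega_1^N$ with $\Prob(\Omega_1^N) \ge 1 - C(\gamma,T)N^{-\gamma}$ on which $\max_i|A_i(t)| \le C\,\max_k|D_k(t)|$ for all $t \le T \wedge \tau^N$, with $C$ independent of $N$ and $\eta$.

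\textbf{LLN bound on $B_i$ and closure.} Since the summands of $B_i$ are bounded almost surely by $2\|\nabla V^\eta\|_{L^\infty} \le CN^{\beta(\lambda+1)}$, Lemma \ref{lem.law} together with a standard time-discretisation and a union bound over $i$ yields an event $\Omega_2^N$ with $\Prob(\Omega_2^N) \ge 1 - C(\gamma,T)N^{-\gamma}$ on which $\sup_{t\le T}\max_i|B_i(t)| \le CN^{-1/2+\beta(\lambda+1)+\delta}$ for any prescribed $\delta > 0$; this is where the upper bound $\alpha < 1/2 - \beta(\lambda+1)$ is used. On $\Omega_1^N \cap \Omega_2^N$ the two previous steps combine into
\begin{equation*}
  \max_i|D_i(t)| \le C\int_0^t \max_k|D_k(s)|\,ds + CT N^{-1/2+\beta(\lambda+1)+\delta} \quad \text{for } t \le T \wedge \tau^N,
\end{equation*}
so Gronwall's inequality gives $\max_i|D_i(t\wedge\tau^N)| \le C(T)N^{-1/2+\beta(\lambda+1)+\delta}$, which is strictly smaller than $N^{-\alpha}$ for $N$ large. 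By continuity of $D_i$ this forces $\tau^N \ge T$ on $\Omega_1^N \cap \Omega_2^N$, so $\Prob(\max_i\sup_{t\le T}|D_i(t)| > N^{-\alpha}) \le C(\gamma,T)N^{-\gamma}$, and the conclusion follows.

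\textbf{Main obstacle.} The key difficulty is producing an \emph{$\eta$-uniform} effective Lipschitz constant for the interaction, overcoming the blow-up $\|\nabla^2 V^\eta\|_{L^\infty}\sim\eta^{-(\lambda+2)}$: this forces us to work with an empirical average instead of a pointwise bound, which in turn requires the sub-Coulomb hypothesis to make $|\nabla^2 \Phi|*\bar u^\eta$ uniformly bounded, a shift perturbation argument that generates the lower constraint $\alpha > \beta(\lambda+3)$, and a sufficiently strong LLN supplying the upper constraint $\alpha < 1/2 - \beta(\lambda+1)$. The restriction $\beta < 1/(4\lambda+12)$ ensures that the admissible window for $\alpha$ is non-empty with enough slack to absorb the arbitrarily small loss $\delta$ from the LLN step.
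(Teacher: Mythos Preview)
Your proposal is correct in its key ideas and uses the same ingredients as the paper: the decomposition into the mean-field piece $A_i$ and the law-of-large-numbers piece $B_i$, the second-order Taylor expansion, the crucial sub-Coulomb bound $\||\nabla^2 V^\eta|*\bar u^\eta\|_{L^\infty}\le C$, and Lemma~\ref{lem.law}. The organisational difference is that you run a \emph{pathwise} Gronwall argument on a high-probability event $\Omega_1^N\cap\Omega_2^N$ (as in \cite{LaPi17,HLP20}), whereas the paper first applies Markov's inequality, introduces $S_\alpha^k(t)=(N^\alpha\max_i|D_i(t\wedge\tau_\alpha)|)^k\le 1$, and runs Gronwall on the \emph{moment} $\E(S_\alpha^k(t))$. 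The moment route has one concrete advantage here: since Lemma~\ref{lem.law} is stated only for fixed time $t$, the paper can split $\Omega$ into $\mathcal{B}^N_{\theta,\psi_\eta}(s)$ and its complement \emph{inside the time integral} and use Fubini, so no time-uniform LLN is ever needed. Your pathwise argument instead requires the ``standard time-discretisation'' you invoke for both $\Omega_1^N$ and $\Omega_2^N$, which is a genuine extra step: discretise $[0,T]$ into $N^p$ points, absorb the union-bound factor $N^p$ by enlarging $m$ in Lemma~\ref{lem.law}, and interpolate between grid points using H\"older regularity of the paths $\bar X_i^\eta$ together with the Lipschitz bounds $\|\nabla^2 V^\eta\|_{L^\infty}\le CN^{\beta(\lambda+2)}$ and $\|\nabla^3 V^\eta\|_{L^\infty}\le CN^{\beta(\lambda+3)}$. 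This is feasible within the stated range of $(\alpha,\beta)$, but it is not free; the paper's moment formulation sidesteps it entirely.
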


The choice of $\beta$ implies that the admissible interval for $\alpha$ is not empty. If $\beta$ is chosen close to zero, the parameter $\alpha$ can be chosen in the interval $(0,1/2)$, while for $\beta$ close to the maximal value $1/(4\lambda+12)$ and large values of $\lambda$ (hence higher space dimensions), $\alpha$ needs to be chosen around $1/4$.

\begin{remark}[Sub-Coulomb potentials]\label{rem.subcoulomb}\rm
The proof of Theorem \ref{thm.prob} is based on an estimate of the form $\||D^2 V^\eta|*\bar{u}^\eta (t)\|_{L^\infty(\R^d)} \leq C$. This can only be obtained for $\lambda<d-2$ which limits our framework to sub-Coulomb potentials. In the case $\lambda = d-2$ with an additional cut-off around the origin, the estimate $\||D^2 V^\eta|* \bar{u}^\eta (t) \|_{L^\infty(\R^d)} \leq C \log N$ can be shown, which is still sufficient to derive convergence in probability; see \cite{LaPi17}. This cut-off, however, violates the convolution structure $V^\eta = Z^\eta \ast Z^\eta$ and cannot be easily used for the proof of the $L^2(\R^d)$ convergence; see Theorem \ref{thm.L2}. Using a suitable cut-off, this result also holds for Coulomb potentials.
\qed\end{remark}

The next result of this paper is the convergence of the smoothed empirical measures to a smoothed solution to the intermediate diffusion equation \eqref{1.baru}. We recall the definitions
\begin{equation}\label{1.fg}
  f^\eta := Z^\eta*\mu^\eta, \quad g^\eta := Z^\eta*\bar{u}^\eta,
\end{equation}
where $\mu^\eta$ is defined in \eqref{1.mu} and $\bar{u}^\eta$ solves \eqref{1.baru}.

\begin{theorem}[Mean-square convergence with $N^{-1/2-\eps}$ rate]\label{thm.L2}
Let Assumptions (A1)--(A4a) hold and let 
$$
  0 < \beta < \frac{1}{8\lambda+12}. 
$$
Then for any $T>0$, there exists $\eps>0$ and a constant $C=C(\beta,d,T)>0$ such that, for all $N\in\N$,
$$
  \E\bigg(\sup_{0<t<T}\|(f^\eta-g^\eta)(t)\|_{L^2(\R^d)}^2
  + \sigma\int_0^T\|\na(f^\eta-g^\eta)(t)\|_{L^2(\R^d)}^2\dd t\bigg)
  \leq CN^{-1/2-\eps}.
$$
\end{theorem}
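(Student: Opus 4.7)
The plan is to derive a stochastic energy estimate for $h := f^\eta - g^\eta$ via It\^o's formula, then close it using Gronwall's inequality after extracting a gain of $N^{-\eps}$ from each error term. Applying It\^o to each summand in $f^\eta(t,x)=N^{-1}\sum_i Z^\eta(x-X_i^\eta(t))$ using \eqref{1.X} and differentiating $g^\eta = Z^\eta \ast \bar u^\eta$ via \eqref{1.baru} yields
\begin{align*}
  dh = \sigma\Delta h\,dt - \kappa\nabla\cdot\bigl[Z^\eta\ast(\mu^\eta\,\nabla V^\eta\ast\mu^\eta - \bar u^\eta\,\nabla V^\eta\ast\bar u^\eta)\bigr]dt + dM_t,
\end{align*}
with martingale part $dM_t = -\sqrt{2\sigma}\,N^{-1}\sum_i\nabla Z^\eta(\cdot-X_i^\eta)\cdot dW_i$. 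A second application of It\^o to $\|h\|_{L^2}^2$, followed by integration by parts, gives
\begin{align*}
  d\|h\|_{L^2}^2 + 2\sigma\|\nabla h\|_{L^2}^2\,dt = 2\kappa D(t)\,dt + \frac{2\sigma}{N}\|\nabla Z^\eta\|_{L^2}^2\,dt + d\tilde M_t,
\end{align*}
where $D(t)$ is a drift cross-term and the third term is the It\^o correction from the quadratic variation.

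Several pieces are straightforward. A Fourier computation on $Z^\eta = \xi^\eta\ast\Psi$ using $\hat\Psi(k)\sim|k|^{(\lambda-d)/2}$ yields $\|\nabla Z^\eta\|_{L^2}^2\sim\eta^{-\lambda-2}$, so the It\^o correction is $\sim N^{\beta(\lambda+2)-1}$, which is $\le CN^{-1/2-\eps}$ because $\beta<1/(8\lambda+12)$ implies $\beta(\lambda+2)<1/4$. The initial error $\E\|h(0)\|_{L^2}^2\lesssim N^{-1}\|Z^\eta\|_{L^2}^2\sim N^{\beta\lambda-1}$ is similarly controlled. The cross-term $D(t)$ admits, after integration by parts and use of the key convolutional identity $Z^\eta\ast h = V^\eta\ast w$ with $w := \mu^\eta-\bar u^\eta$, the decomposition
\begin{align*}
  D(t) = \int|\nabla V^\eta\ast w|^2\,d\mu^\eta + \int(\nabla V^\eta\ast\bar u^\eta)\cdot(\nabla V^\eta\ast w)\,dw.
\end{align*}
To bound both pieces, I insert the intermediate empirical measure $\bar\mu^\eta := N^{-1}\sum_i\delta_{\bar X_i^\eta}$ and use the systematic splitting $\mu^\eta - \bar u^\eta = (\mu^\eta - \bar\mu^\eta) + (\bar\mu^\eta - \bar u^\eta)$: the mean-field difference $\mu^\eta - \bar\mu^\eta$ is controlled by Theorem \ref{thm.prob} (Taylor-expanding $\nabla V^\eta$ to produce an error $\lesssim\|\nabla^2 V^\eta\|_{L^\infty}\max_i|X_i^\eta-\bar X_i^\eta|\sim N^{\beta(\lambda+2)-\alpha}$ on the good event of probability $1-N^{-\gamma}$ for arbitrary $\gamma$, the complement being absorbed polynomially), while the law-of-large-numbers difference $\bar\mu^\eta - \bar u^\eta$ is controlled by the independence of the $\bar X_i^\eta$ (giving variance $\lesssim N^{-1}\|\nabla V^\eta\|_{L^\infty}^2\sim N^{2\beta(\lambda+1)-1}$ per coordinate). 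The $L^\infty$ bound on $\bar u^\eta$ coming from the intermediate PDE theory then closes integrals like $\int|\nabla V^\eta\ast w|^2\bar u^\eta\,dx\lesssim\|\bar u^\eta\|_{L^\infty}\|\nabla V^\eta\ast w\|_{L^2}^2$, allowing the residual to be absorbed into $\|\nabla h\|_{L^2}^2$ and $\|h\|_{L^2}^2$. Gronwall yields the pointwise-in-time estimate, and a Burkholder--Davis--Gundy argument on $\tilde M_t$ upgrades it to the supremum in time.

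The main obstacle is the attractive case $\kappa=+1$, where the quadratic term $\int|\nabla V^\eta\ast w|^2\,d\mu^\eta$ appears with the wrong sign and cannot be absorbed by the diffusion. Since $\|\nabla V^\eta\|_{L^\infty}$ and $\|\nabla^2 V^\eta\|_{L^\infty}$ scale as $\eta^{-\lambda-1}$ and $\eta^{-\lambda-2}$ respectively as $\eta\to 0$, no naive interpolation estimate is good enough. The resolution is precisely the two-stage approximation $\mu^\eta\to\bar\mu^\eta\to\bar u^\eta$: the first step exchanges particles using the algebraic rate of Theorem \ref{thm.prob} with carefully chosen exponent $\alpha$, while the second exploits the i.i.d.\ cancellation among the $\bar X_i^\eta$. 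Requiring the Taylor error $N^{\beta(\lambda+2)-\alpha}$ and the LLN contribution $N^{2\beta(\lambda+1)-1}$ to both beat $N^{-1/2}$, within the admissible range $\beta(\lambda+3)<\alpha<\tfrac12-\beta(\lambda+1)$ inherited from Theorem \ref{thm.prob}, forces exactly the stronger constraint $\beta<1/(8\lambda+12)$. The sub-Coulomb condition $\lambda<d-2$ is likewise essential, as it underlies the $L^\infty$ bounds on $\bar u^\eta$ and on $\||D^2V^\eta|\ast\bar u^\eta\|_{L^\infty}$ used throughout.
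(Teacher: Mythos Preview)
Your overall strategy coincides with the paper's: derive the energy identity for $h=f^\eta-g^\eta$, isolate the two drift contributions (which the paper calls $M(T)=\int|\nabla V^\eta\ast w|^2\,d\mu^\eta$ and $L(T)=\int(\nabla V^\eta\ast\bar u^\eta)\cdot(\nabla V^\eta\ast w)\,dw$), handle the It\^o correction $\sim N^{\beta(\lambda+2)-1}$ and the initial error $\sim N^{\beta\lambda-1}$, and apply the splitting $\mu^\eta-\bar u^\eta=(\mu^\eta-\bar\mu^\eta)+(\bar\mu^\eta-\bar u^\eta)$ with Theorem~\ref{thm.prob} on the first piece and the law of large numbers on the second. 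Your identification of the constraint $\beta<1/(8\lambda+12)$ as arising from the squared Taylor error $N^{2\beta(\lambda+2)-2\alpha}$ against the admissible window for $\alpha$ is correct. The paper's derivation of the energy identity differs only cosmetically (It\^o applied to $V^\eta(X_i^\eta-X_j^\eta)$ for particle pairs rather than to a function-space SDE for $h$).

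There is, however, a genuine gap in your handling of the attractive term $M$. You write that after the splitting a residual like $\int|\nabla V^\eta\ast w|^2\bar u^\eta\,dx\le\|\bar u^\eta\|_{L^\infty}\|\nabla V^\eta\ast w\|_{L^2}^2$ remains, ``allowing the residual to be absorbed into $\|\nabla h\|_{L^2}^2$ and $\|h\|_{L^2}^2$''. This absorption fails: $\nabla V^\eta\ast w=Z^\eta\ast\nabla h$, and since $|\widehat{Z^\eta}(k)|=|\hat\xi(\eta k)\hat\Psi(k)|\sim|k|^{(\lambda-d)/2}$ is unbounded near $k=0$, neither $\|Z^\eta\ast\nabla h\|_{L^2}^2\le C\|\nabla h\|_{L^2}^2$ nor $\le C\|h\|_{L^2}^2$ holds with an $N$-independent constant. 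The paper never produces such a residual. Instead it bounds $M(T)$ \emph{entirely at the particle level}: writing $(\nabla V^\eta\ast w)(X_i^\eta)=N^{-1}\sum_j\nabla V^\eta(X_i^\eta-X_j^\eta)-\nabla V^\eta\ast\bar u^\eta(X_i^\eta)$ and inserting \emph{both} $\nabla V^\eta(\bar X_i^\eta-\bar X_j^\eta)$ and $\nabla V^\eta\ast\bar u^\eta(\bar X_i^\eta)$ yields three pieces $M_1,M_2,M_3$, each of which is $\le CN^{-1/2-\eps}$ outright (two by Theorem~\ref{thm.prob} on $\mathcal C_\alpha$ and its complement, one by Lemma~\ref{lem.law} on $\mathcal B_\theta$). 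Absorption into $\sigma\|\nabla h\|_{L^2}^2$ occurs only for $L(T)$, and consequently no Gr\"onwall step is needed at the end. Note also that applying the LLN to $N^{-1}\sum_j\nabla V^\eta(X_i^\eta-\bar X_j^\eta)$ at the point $X_i^\eta$ is not covered by Lemma~\ref{lem.law}, since $X_i^\eta$ is not independent of the $\bar X_j^\eta$; this is why the outer evaluation point must also be swapped to $\bar X_i^\eta$ (producing the extra piece $M_3$).
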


A similar theorem was proved by Oelschl\"ager \cite{Oel87} for the so-called corrected fluctuations in the repulsive case $\kappa=-1$ and with $\Phi = \delta_0$. With this choice of $\kappa$, a certain quadratic expression (see \eqref{4.M}) is nonpositive and can be neglected. This is no longer possible if $\kappa=1$ (attractive case). We are able to treat this case by exploiting the mean-field convergence in probability (Theorem \ref{thm.prob}), which helps us to absorb the terms with $\kappa=1$. 

\begin{remark}\label{rem.general}\rm
Theorem \ref{thm.L2} can be generalized to a more general class of potentials $\Phi$. Indeed, let $\Phi=\Psi*\Psi$, where $\Psi$ is symmetric and nonnegative. Additionally, we assume that for $0<\lambda<d-2$, there exists $C>0$ such that for all $0<t<T$ and $k=0,1,2$,
\begin{equation}\label{1.prop}
\begin{aligned}
  & \|\mathrm{D}^k V^\eta\|_{L^\infty(\R^d)} \le CN^{\beta(\lambda+k)}, 
  \\
  & \|Z^\eta\|_{L^2(\R^d)} \le CN^{\beta \lambda/2}, \
  \|\mathrm{D}^k V^\eta*\bar{u}^\eta(t)\|_{L^\infty(\R^d)} \le C, \\
  &\|\na Z^\eta\|_{L^2(\R^d)} \le CN^{\beta (\lambda+1)/2}, \quad
  \||\mathrm{D}^2 V^\eta|*\bar{u}^\eta(t)\|_{L^\infty(\R^d)} \le C.
\end{aligned}
\end{equation}
Then the result of Theorem \ref{thm.L2} holds. We show in Section \ref{sec.riesz} that the Riesz kernels $\Phi(x)=|x|^{-\lambda}$ satisfy \eqref{1.prop}. As mentioned in Remark \ref{rem.subcoulomb}, the last estimate in \eqref{1.prop} can only be obtained for $\lambda<d-2$, which limits our framework to the sub-Coulomb case. This estimate is needed in our proof of convergence in probability (see Theorem \ref{thm.prob}). If Theorem \ref{thm.prob} holds also for $\lambda=d-2$, then the proof of fluctuations around the mean-field limit can be extended to Coulomb potentials.
\qed\end{remark}

Our final result concerns the central limit theorem for the intermediate fluctuations process, which is a consequence of Theorem \ref{thm.L2}.

\begin{theorem}[Asymptotic fluctuations limit via characteristic function]
\label{thm.fluct}\
Let the assump\-tions of Theorem \ref{thm.L2} and additionally Assumption (A4b) hold, and let $\mathcal{F}_0$ be the Gaussian random distribution such that
$$
  \E|\langle\mathcal{F}^\eta(0),\phi\rangle-\langle\mathcal{F}_0,
  \phi\rangle|\to 0\quad\mbox{as }\eta=N^{-\beta}\to 0
$$ 
for all Schwartz functions $\phi$. Then for all Schwartz functions $\phi$, $0\le s<t<\infty$, and $\theta\in\R$,
\begin{align*}
  \E\bigg|\E\bigg(\exp\big[\mathrm{i}\theta\langle\mathcal{F}^\eta(t),
  \phi\rangle \big]\bigg|\mathscr{F}_s\bigg) - \exp\bigg(\mathrm{i}\theta
  \langle\mathcal{F}^\eta(s),T_\phi^t(s)\rangle 
   - \sigma\theta^2\int_s^t\langle u(r),|\na T_\phi^t(r)|^2\rangle
  \dd r \bigg)\bigg|\to 0
\end{align*}
as $\eta=N^{-\beta}\to 0$, where $u$ solves \eqref{1.u}, 
$$
  \mathscr{F}_s = \sigma\big(\langle\mathcal{F}^\eta(r),
  \phi \rangle: 0\leq r\leq s, \phi \mbox{ is a Schwartz function}\big),
$$
$u$ is the solution to \eqref{1.u}, $T_\phi^t$ is the solution to the backward problem $\mathcal{L}^*\psi=0$ in $\R^d\times(0,t)$, $\psi(t)=\phi$, and $\mathcal{L}^*$ is the dual of the operator $\mathcal{L}$ of the linearized version of \eqref{1.u} (i.e.\ $T_\phi^t$ solves \eqref{1.back}).
\end{theorem}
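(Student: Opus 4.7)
The plan is to realize $\E[\exp(\mathrm{i}\theta\langle\mathcal{F}^\eta(t),\phi\rangle)\mid\mathscr{F}_s]$ as the terminal value of an exponential process. For $r\in[s,t]$ set
$$
  \Xi^\eta(r) := \exp\bigg(\mathrm{i}\theta\langle\mathcal{F}^\eta(r),T_\phi^t(r)\rangle - \sigma\theta^2\int_r^t\langle u(\rho),|\na T_\phi^t(\rho)|^2\rangle\dd\rho\bigg),
$$
so that $\Xi^\eta(t)=\exp(\mathrm{i}\theta\langle\mathcal{F}^\eta(t),\phi\rangle)$, while $\Xi^\eta(s)$ is precisely the target exponential in the statement and is $\mathscr{F}_s$-measurable. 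Since $|\Xi^\eta|\le 1$, the assertion reduces to proving $\E|\E[\Xi^\eta(t)-\Xi^\eta(s)\mid\mathscr{F}_s]|\to 0$ as $\eta=N^{-\beta}\to 0$.

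Applying the weak SDE \eqref{1.flucSDE} for $\mathcal{F}^\eta$ (Lemma \ref{lem.dF}) with the time-dependent test function $T_\phi^t(r)$, decomposing $\mu^\eta=\bar u^\eta+h^\eta$ with $h^\eta:=\mathcal{F}^\eta/\sqrt N$, and symmetrizing via the evenness of $V^\eta$, I obtain
$$
\begin{aligned}
  \dd\langle\mathcal{F}^\eta(r),T_\phi^t(r)\rangle &= \big[\langle\mathcal{F}^\eta,(\pa_r+\sigma\Delta)T_\phi^t\rangle + \kappa\langle\mathcal{F}^\eta,(\na V^\eta*\bar u^\eta)\cdot\na T_\phi^t\rangle \\
  &\quad - \kappa\langle\mathcal{F}^\eta,\na V^\eta*(\bar u^\eta\na T_\phi^t)\rangle + \mathcal{R}_2^\eta\big]\dd r + \dd M^\eta_r,
\end{aligned}
$$
where $\mathcal{R}_2^\eta:=\kappa\langle\mathcal{F}^\eta,(\na V^\eta*h^\eta)\cdot\na T_\phi^t\rangle$ is the nonlinear remainder and $\dd M^\eta_r:=\sqrt{2\sigma/N}\sum_i\na T_\phi^t(X_i^\eta)\cdot\dd W_i$ has quadratic variation $2\sigma\langle\mu^\eta,|\na T_\phi^t|^2\rangle\dd r$. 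Substituting the backward equation \eqref{1.back} for $T_\phi^t$ collapses the linear-in-$\mathcal{F}^\eta$ drift into the PDE-discrepancy
$$
  \mathcal{R}_1^\eta := \kappa\langle\mathcal{F}^\eta,(\na V^\eta*\bar u^\eta-\na\Phi*u)\cdot\na T_\phi^t\rangle - \kappa\langle\mathcal{F}^\eta,\na V^\eta*(\bar u^\eta\na T_\phi^t)-\na\Phi*(u\na T_\phi^t)\rangle,
$$
and Itô's formula, together with the extra deterministic drift $\sigma\theta^2\langle u,|\na T_\phi^t|^2\rangle$ built into $\Xi^\eta$, yields
$$
  \dd\Xi^\eta(r) = \mathrm{i}\theta\Xi^\eta(r)\dd M^\eta_r + \Xi^\eta(r)\big[\mathrm{i}\theta(\mathcal{R}_1^\eta+\mathcal{R}_2^\eta)+\sigma\theta^2\langle u-\mu^\eta,|\na T_\phi^t|^2\rangle\big]\dd r.
$$
Integrating over $(s,t)$ and taking $\E[\cdot\mid\mathscr{F}_s]$, the stochastic integral vanishes and the theorem reduces to showing that
$$
  \E\int_s^t\big(|\mathcal{R}_1^\eta|+|\mathcal{R}_2^\eta|+|\langle u-\mu^\eta,|\na T_\phi^t|^2\rangle|\big)\dd r\to 0.
$$

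The hard part will be $\mathcal{R}_2^\eta$, which is quadratic in $h^\eta$ but premultiplied by $\sqrt N$; the plain CLT scaling $\mathcal{F}^\eta=O_\Prob(1)$ would leave an $O(1)$ obstruction, and it is precisely here that the sharper rate of Theorem \ref{thm.L2} is decisive. Using $V^\eta=Z^\eta*Z^\eta$, I rewrite $\na V^\eta*h^\eta=\na Z^\eta*(f^\eta-g^\eta)$ and, after moving one $Z^\eta$ onto the remaining $h^\eta$-factor via Fubini and symmetry and applying a standard commutator estimate that exploits the $W^{2,\infty}$-bound on $T_\phi^t$ from Lemma \ref{lem.estT}, Cauchy--Schwarz gives
$$
  |\mathcal{R}_2^\eta|\lesssim\sqrt N\big(\|f^\eta-g^\eta\|_{L^2}^2+\|f^\eta-g^\eta\|_{L^2}\|\na(f^\eta-g^\eta)\|_{L^2}\big),
$$
whence $\E\int_s^t|\mathcal{R}_2^\eta|\dd r\lesssim\sqrt N\cdot N^{-1/2-\eps}=N^{-\eps}\to 0$ by Theorem \ref{thm.L2}. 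The PDE-discrepancy $\mathcal{R}_1^\eta$ is handled by combining the crude bound $\|\mathcal{F}^\eta\|_{TV}\le 2\sqrt N$ with the Assumption (A4b) regularity $u_0\in W^{K,\infty}\cap W^{K,1}$ for $K>1/(2\beta)$: standard mollifier estimates together with Lemma \ref{lem.wLinfty} then yield $\|\na V^\eta*\bar u^\eta-\na\Phi*u\|_{L^\infty}+\|\na V^\eta*(\bar u^\eta\na T_\phi^t)-\na\Phi*(u\na T_\phi^t)\|_{L^\infty}=o(N^{-1/2})$, and hence $\E|\mathcal{R}_1^\eta|=o(1)$. For the last term, splitting $\langle u-\mu^\eta,|\na T_\phi^t|^2\rangle = \langle u-\bar u^\eta,|\na T_\phi^t|^2\rangle+\langle\bar u^\eta-\mu^\eta,|\na T_\phi^t|^2\rangle$, the first summand vanishes by Lemma \ref{lem.wLinfty}, while the second is controlled by invoking Theorem \ref{thm.prob} to replace each $X_i^\eta$ by $\bar X_i^\eta$ up to an error of order $N^{-\alpha}$ on an event of $\Prob$-probability $1-CN^{-\gamma}$, followed by a standard $N^{-1/2}$ variance estimate for the i.i.d.\ family $(|\na T_\phi^t|^2(\bar X_i^\eta))_{i=1}^N$. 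Assembling the three bounds proves the theorem.
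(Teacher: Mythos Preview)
Your overall architecture---defining the exponential process $\Xi^\eta$, applying It\^o's formula, and reducing to the three remainders---matches the paper's Lemma~\ref{lem.fourier}. The genuine gap is in your treatment of $\mathcal{R}_1^\eta$.

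You claim that Assumption~(A4b) together with ``standard mollifier estimates'' yields $\|\na V^\eta*\bar u^\eta-\na\Phi*u\|_{L^\infty}=o(N^{-1/2})$. This is false under the stated hypotheses. The mollifier $\chi^\eta$ is only assumed radially symmetric, so only its odd moments vanish; the Taylor expansion of $(\chi^\eta*w-w)$ therefore gives at best $O(\eta^2)$, regardless of how many derivatives $w$ has. Combined with $\|\bar u^\eta-u\|_{L^\infty}=O(\eta)$ from Lemma~\ref{lem.wLinfty}, the discrepancy is $O(\eta)$, so your crude TV bound yields $|\mathcal{R}_1^\eta|\lesssim\sqrt{N}\,\eta=N^{1/2-\beta}$, which \emph{diverges} since $\beta<1/(8\lambda+12)<1/2$. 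The paper flags exactly this obstruction (``a naive estimation now would lead to $C(T)N^{1/2-\beta}$, which diverges'') and resolves it not by mollifier estimates but by an \emph{iteration argument} (Lemma~\ref{lem.F23}): one re-applies the fluctuation identity \eqref{8.F123} with $\psi^1_{\eta,t,\phi}$ in place of $\phi$, gaining an additional factor of~$\eta$ at each step, and after $K^*>1/(2\beta)$ iterations obtains $\sqrt{N}\,\eta^{K^*}\to 0$. This is precisely why Assumption~(A4b) demands $K>1/(2\beta)$---to control the $W^{K^*,\infty}$ norms of the iterated test functions via Lemma~\ref{lem.estT}, not to feed into a mollifier rate.

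A secondary concern is your bound on $\mathcal{R}_2^\eta$. The ``standard commutator estimate'' you invoke would require controlling $Z^\eta*(F\,\dd(\mu^\eta-\bar u^\eta))$ in $L^2$, but $\mu^\eta-\bar u^\eta$ is a signed measure and the commutator kernel $Z^\eta(z)|z|$ is not in $L^2(\R^d)$ when $\lambda\le 2$ (since $Z^\eta\sim|z|^{-(\lambda+d)/2}$ at infinity). The paper does not attempt such a pointwise $L^2$ bound; instead it estimates $\E\int|\mathcal{R}_2^\eta|$ via the mean-field/law-of-large-numbers splitting of Remark~\ref{rem.F2}, which ultimately relies on Theorem~\ref{thm.prob} and Lemma~\ref{lem.law} rather than on commutators.
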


{We observe that, since the initial conditions are i.i.d., $\langle\mathcal{F}_0, \phi\rangle$ is indeed Gaussian with mean zero and variance $\langle u_0,\phi^2\rangle - \langle u_0, \phi\rangle^2$.
The proof of Theorem \ref{thm.fluct} is based on the $N^{-1/2-\eps}$ decay from Theorem \ref{thm.L2} and uses the strategy of \cite[Sec.~3]{Oel87} to estimate the characteristic function of the intermediate fluctuations process. An immediate consequence of Theorem \ref{thm.fluct} is the following corollary.

\begin{corollary}[Asymptotic central limit theorem] 
Let the assumptions of Theorem \ref{thm.fluct} hold. Then, for any Schwartz function $\phi$ and $t>0$, as $\eta=N^{-\beta}\to 0$,
\begin{align*}
  \mathrm{Law}(\langle\mathcal{F}^\eta(t),\phi\rangle|\mathscr{F}_0)
  \to \mathcal{N}\bigg(\langle\mathcal{F}_0,T_\phi^t(0)\rangle,
  2\sigma\int_0^t\langle u(s),|\na T_\phi^t(s)|^2\rangle\dd s\bigg).
\end{align*}
\end{corollary}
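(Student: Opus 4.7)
The plan is to apply Theorem \ref{thm.fluct} with $s=0$ and then invoke L\'evy's continuity theorem for conditional distributions. Since $\langle\mathcal{F}^\eta(0),T_\phi^t(0)\rangle$ depends only on the i.i.d.\ initial data $\zeta_1,\dots,\zeta_N$, it is $\mathscr{F}_0$-measurable; thus setting $s=0$ in Theorem \ref{thm.fluct} gives
\begin{equation*}
  \E\bigg|\E\big(\exp[\mathrm{i}\theta\langle\mathcal{F}^\eta(t),\phi\rangle]\big|\mathscr{F}_0\big)
  - \exp\big(\mathrm{i}\theta\langle\mathcal{F}^\eta(0),T_\phi^t(0)\rangle\big)\,
     \exp\Big(-\sigma\theta^2\int_0^t\langle u(r),|\na T_\phi^t(r)|^2\rangle\dd r\Big)\bigg|\to 0.
\end{equation*}
The deterministic exponential on the right is precisely the characteristic function at $\theta$ of a centered Gaussian with variance $2\sigma\int_0^t\langle u(s),|\na T_\phi^t(s)|^2\rangle\dd s$, while the random exponential captures the $\mathscr{F}_0$-measurable mean.

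The next step is to identify the limit. By Lemma \ref{lem.estT}, the backward solution $T_\phi^t(0)$ inherits enough regularity and decay from the Schwartz terminal datum $\phi$ to belong to the class of test functions for which the assumed initial convergence $\E|\langle\mathcal{F}^\eta(0),\psi\rangle-\langle\mathcal{F}_0,\psi\rangle|\to 0$ applies. Using the elementary bound $|e^{\mathrm{i}x}-e^{\mathrm{i}y}|\le|x-y|$ together with the triangle inequality, I replace $\langle\mathcal{F}^\eta(0),T_\phi^t(0)\rangle$ by $\langle\mathcal{F}_0,T_\phi^t(0)\rangle$ in the $L^1(\Omega)$-limit, obtaining
\begin{equation*}
  \E\big(\exp[\mathrm{i}\theta\langle\mathcal{F}^\eta(t),\phi\rangle]\big|\mathscr{F}_0\big)
  \longrightarrow \exp\!\Big(\mathrm{i}\theta\langle\mathcal{F}_0,T_\phi^t(0)\rangle
     - \sigma\theta^2\int_0^t\langle u(r),|\na T_\phi^t(r)|^2\rangle\dd r\Big)
  \quad\text{in } L^1(\Omega).
\end{equation*}
The right-hand side is exactly the $\mathscr{F}_0$-conditional characteristic function of the Gaussian law stated in the corollary, since $\mathcal{F}_0$ is $\mathscr{F}_0$-measurable (being the limit in $L^1$ of an $\mathscr{F}_0$-measurable sequence).

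Finally, to pass from convergence of conditional characteristic functions to convergence of conditional laws, I would extract a subsequence along which the convergence holds almost surely for every $\theta$ in a countable dense subset of $\R$; continuity in $\theta$ of the limiting characteristic function then yields pointwise-in-$\omega$ convergence for all $\theta\in\R$, and L\'evy's continuity theorem, applied $\omega$-wise, gives weak convergence of the conditional laws on that subsequence. Since the limit does not depend on the chosen subsequence, the full family converges, as claimed. The main obstacle I anticipate is verifying sufficient regularity of $T_\phi^t(0)$: one needs $T_\phi^t(0)$ to be admissible as a test function in the initial-convergence hypothesis, which amounts to parabolic regularity and decay estimates for the backward problem \eqref{1.back}; the remaining upgrade from $L^1$ convergence of characteristic functions to weak convergence of conditional laws is a routine subsequence argument.
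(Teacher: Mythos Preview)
Your proposal is correct and matches the paper's approach: the paper treats this corollary as an immediate consequence of Theorem \ref{thm.fluct} and, in the remark following Lemma \ref{lem.fourier}, sketches exactly your argument---set $s=0$, recognize the resulting expression as a Gaussian conditional characteristic function, and invoke the regular conditional law. Your concern about whether $T_\phi^t(0)$ is admissible in the initial-convergence hypothesis is not a genuine obstacle: Lemma \ref{lem.estT} only yields $W^{k,\infty}$ bounds (not Schwartz decay), but because the initial data $\zeta_1,\dots,\zeta_N$ are i.i.d.\ with common density $u_0$, the classical CLT gives $\E|\langle\mathcal{F}^\eta(0),\psi\rangle-\langle\mathcal{F}_0,\psi\rangle|\to 0$ for any bounded measurable $\psi$, which covers $T_\phi^t(0)$; the paper uses this same reasoning in the proof of Lemma \ref{lem.F23} when bounding terms involving $\mathcal{F}^\eta(0)$.
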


Theorem \ref{thm.fluct} shows that the (asymptotic) limiting fluctuations process is normally distributed. Indeed, it follows from this theorem and the fact that the initial conditions are i.i.d.\ that the asymptotic limit $\mathcal{F}$ fulfills \eqref{1.fluctuations}, i.e.
\begin{align*}
  \langle \mathcal{F}(t), \phi \rangle 
  \sim \mathcal{N}\bigg(0,\langle u_0,(T^t_\phi (0))^2 \rangle 
  - \langle u_0, T^t_\phi (0)\rangle^2 
  + 2\sigma\int_{0}^t \langle u(s), |\nabla T^t_\phi (s)|^2\rangle 
  \dd s\bigg).
\end{align*}
Formally, we can expect that the limiting distribution $\mathcal{F}$ of $\mathcal{F}^\eta$ is a generalized Ornstein--Uhlenbeck process, namely a formal solution to the following stochastic PDE:
\begin{align*}
  \partial_t\mathcal{F} = \sigma\Delta\mathcal{F} 
  - \kappa\diver(\mathcal{F}\na\Phi*u+u\na\Phi*\mathcal{F})
  - \sqrt{2\sigma}\nabla \cdot(\sqrt{u}\xi),
\end{align*} 
in $\R^d\times(0,T)$ with initial datum $\mathcal{F}(0)=\mathcal{F}_0$, where $\xi\in L^2((0,T)\times\R^d;\R^d)\to L^2(\Omega;\R^d)$ is a vector-valued space-time white noise and $u$ solves \eqref{1.u}.  

The paper is organized as follows. We sketch the technical proofs of the main theorems in Section \ref{sec.key}. The law-of-large-numbers estimate is shown in Section \ref{sec.law}. Our first result, the mean-field estimate of Theorem \ref{thm.prob}, is proved in Section \ref{sec.prob}. With these preparations, the lengthy proof of Theorem \ref{thm.L2} is detailed in Section \ref{sec.quant}, and Theorem \ref{thm.fluct} is shown in Section \ref{sec.fluct}. Some technical proofs are deferred to the appendices: Estimates \eqref{1.prop} for the Riesz kernels \eqref{1.Veta} are proved in Appendix \ref{sec.riesz}, while the uniform bounds for the solutions to \eqref{1.baru} and the solution $T_\phi^t$ to the backward dual problem are investigated in Appendix \ref{sec.pde}.


\section{Ideas of the proofs}\label{sec.key}

We show the central limit theorem (Theorem \ref{thm.fluct}) by exploiting the $L^2(\R^d)$ convergence for the convoluted quantities $Z^\eta\ast(\mu^\eta - \bar{u}^\eta)$ (Theorem \ref{thm.L2}) with rate $N^{-1/2-\eps}$ for some $\eps >0$. This $L^2(\R^d)$ convergence for aggregating or repulsive particles can be obtained by using a quantitative convergence-in-probability result for the difference of the solutions to particle system \eqref{1.X} and the smoothed intermediate particle system \eqref{1.barX} (Theorem \ref{thm.prob}). In the following, we explain the key ideas for our three main theorems.

\subsection{Central limit theorem} 

The proof of the central limit theorem (Theorem \ref{thm.fluct}) follows ideas of \cite{Oel87}. Using the results of Theorem \ref{thm.L2}, the main idea is the calculate the expected value of the (conditional) characteristic function $\mathbb{E}(\exp(\mathrm{i}\theta\langle \mathcal{F}^\eta(t), \phi \rangle)|\mathscr{F}_s)$ for a given smooth test function $\phi$ (which does not depend on time) and $\theta \in \R$. This conditional characteristic function corresponds to the unique conditional law of an Ornstein--Uhlenbeck process; see, e.g., \cite{HS78} or \cite{FGN13}.

First, we show that, for any test function $\psi(x,t)$,
\begin{align}\label{2.F}
  &\dd\langle\mathcal{F}^\eta,\psi\rangle
  = -\langle\mathcal{F}^\eta,\mathcal{L}^*\psi\rangle\dd t
  + \frac{\sqrt{2\sigma}}{\sqrt{N}}\sum_{i=1}^N\na\psi(X_i^\eta(t))
  \cdot\dd W_i(t) \\
  &\phantom{x}+ \kappa\bigg(\frac{1}{\sqrt{N}}\langle\mathcal{F}^\eta,
  (\na V^\eta*\mathcal{F}^\eta)\cdot\na\psi\rangle
  + \big\langle\mathcal{F}^\eta,(\na V^\eta*\bar{u}^\eta)
  \cdot\na\psi - \na V^\eta*(\bar{u}^\eta\na\psi)\big\rangle \nonumber\\
  &\phantom{x}- \big\langle\mathcal{F}^\eta,
  (\na\Phi*u)\cdot\na\psi - \na\Phi*(u\na\psi)\big\rangle\bigg)\dd t, \nonumber
\end{align} 
where $\mathcal{L}^*\psi = -\pa_s\psi - \sigma\Delta\psi 
- \kappa((\na\Phi*u)\cdot\na\psi - \na\Phi*(u\na\psi))$; see Lemma \ref{lem.dF}. Let $T_\phi^t(s)$ be for fixed test function $\phi$ the dual linear backwards time evolution process; see Lemma \ref{lem.estT}. The choice $\psi(x,s) = T_\phi^t(s)$ for a smooth function $\phi$ leads to $ \mathcal{L}^*\psi=0$, and the first term on the right-hand side of \eqref{2.F} vanishes, while the other terms on the right-hand side converge to zero in expectation.

Next, by Theorem \ref{thm.L2} and a suitable iteration argument detailed in Lemma \ref{lem.F23}), it is shown that the last three terms in \eqref{2.F} do not contribute to the limiting fluctuation behavior, i.e.
\begin{align*}
    \E\bigg|\E\big(\exp(\mathrm{i}\theta\langle\mathcal{F}^\eta(t),
    \phi\rangle)|\mathscr{F}_s\big)
    - \exp\bigg(\mathrm{i}\theta\langle\mathcal{F}^\eta(s),T_\phi^t(s)
    \rangle-\theta^2 \sigma\int_s^t\langle u(r),|\na T^t_\phi(r)|^2\rangle\dd r\bigg)\bigg|\to 0.
\end{align*}
This shows that (since at $t=0$ a classical central limit theorem holds) at any time $t>0$, the asymptotic fluctuation behavior of $\langle \mathcal{F}^\eta(t), \phi \rangle$ remains Gaussian for $N\to \infty$ with mean zero and variance $\langle u_0,(T^t_\phi (0))^2 \rangle 
- \langle u_0, T^t_\phi (0)\rangle^2 
+ 2\sigma\int_{0}^{t}\langle u(s), |\nabla T_\phi^t(s)|^2 \rangle\dd s$. We refer to Section \ref{sec.fluct} for more details.

\subsection{Mean-field limit in $L^2(\R^d)$}

The proof of Theorem \ref{thm.L2} is based on two ideas: a {\em law-of-large-numbers estimate}, which measures the difference of the arithmetic mean and the expectation with respect to $\bar{X}_i^\eta$; and a {\em mean-field estimate}, expressing the convergence of $X_i^\eta-\bar{X}_i^\eta$ in probability (Theorem \ref{thm.prob}).

\subsubsection*{Law-of-large-numbers estimate} 

Roughly speaking, we wish to derive estimates for the probability that the difference between the arithmetic mean of some particle interactions and the convolution with the common density function lies outside a ball of radius $N^{-\theta}$ for any $\theta\ge 0$. More precisely, let
$$
  \mathcal{B}_{\theta,\psi_\eta}^N(t) 
  := \bigcup_{i=1}^N\bigg\{\omega\in\Omega:
  \bigg|\frac{1}{N}\sum_{j=1}^N
  \psi_\eta(\bar{X}_i^\eta(t)-\bar{X}_j^\eta(t)) 
  - (\psi_\eta*\bar{u}^\eta)(\bar{X}_i^\eta(t))\bigg|
  > N^{-\theta}\bigg\},
$$
where $\psi_\eta$ is a given bounded function. We claim in Lemma \ref{lem.law} below that, for any $m\in\N$ and $T>0$, there exists $C>0$ such that
\begin{equation}\label{1.probA}
  \Prob(\mathcal{B}_{\theta,\psi_\eta}^N(t)) 
  \le C(m)\|\psi_\eta\|_{L^\infty(\R^d)}^{2m}N^{2m(\theta-1/2)+1}.
\end{equation}
A possible choice is $\psi_\eta=\na V^\eta$, whose $L^\infty(\R^d)$ norm is bounded by $N^{\beta(\lambda+1)}$ (see Lemma \ref{lem.DkV}). Choosing $0<\theta<1/2$ and $m\in\N$ sufficiently large, inequality \eqref{1.probA} provides an estimate with factor $N^{-\gamma}$ for any $\gamma>0$. This factor allows us to absorb in the proof of Theorem \ref{thm.L2} other terms growing algebraically with $N$.

We prove estimate \eqref{1.probA} by applying Markov's inequality:
$$
  \Prob(\mathcal{B}_{\theta,\psi_\eta}^N(t))\le N^{2m\theta+1}
	\E\bigg(\frac{1}{N^{2m}}\bigg|\sum_{j=1}^N h_{ij}(t)\bigg|^{2m}\bigg)
	= N^{2m(\theta-1)+1}\E\bigg(\bigg(\sum_{j,k=1}^N
	h_{ij}h_{ik}\bigg)^m\bigg),
$$
where $h_{ij}=\psi_\eta(\bar X_i^\eta-\bar X_j^\eta)
-(\psi_\eta*\bar{u}^\eta)(\bar{X}^\eta_i)$. Note that due the symmetry of particle system \eqref{1.X}, the right-hand side is independent of $i=1,\ldots, N$. It turns out that the expectation on the right-hand side vanishes if $h_{ij}$ appears only once in the product and that the cardinality of all products $\prod_{k=1}^{2m}h_{ij_k}$, where $h_{ij_k}$ appears at least twice, is bounded by $N^m$ (up to some multiplicative constant). As each of the products is bounded by $\|\psi_\eta\|_{L^\infty(\R^d)}^{2m}$, we conclude estimate \eqref{1.probA}. 

\subsubsection*{Mean-field estimate} 

We sketch the proof of Theorem \ref{thm.prob}, the convergence in probability
$$
  \Prob\Big(\max_{i=1,\ldots,N}|X_i^\eta(t)-\bar{X}_i^\eta(t)|
  > N^{-\alpha}\Big) \le C(\gamma,T)N^{-\gamma},
$$
for arbitrary $\gamma>0$. To this end, we apply first Markov's inequality:
\begin{align*}
  \Prob\Big(\max_{i=1,\ldots,N}
  |X_i^{\eta}(t)-\bar X_i^\eta(t)|>N^{-\alpha}\Big)
  \le \E(S_\alpha^k(t)),
\end{align*}
where $S_\alpha^k(t)=N^{\alpha k}\max_{i=1,\ldots,N}|(X_i^{\eta}-\bar X_i^\eta)(t\wedge\tau_\alpha)|^k$, $\tau_\alpha$ is a suitable stopping time such that $|S_\alpha^k| \leq 1$, and $k\in\N$ is an arbitrary number. To estimate the expectation of $S_\alpha^k(t)$, we insert equations \eqref{1.X} and \eqref{1.barX} for $X_i^\eta$ and $\bar{X}_i^\eta$, respectively, and add and subtract $\na V^\eta(\bar X_i^\eta-\bar X_j^\eta)$. Then we need to estimate for $i=1,\ldots,N$,
\begin{align*}
  Q_1 &= \E\bigg(N^{\alpha k}\int_0^t\bigg|\frac{1}{N}\sum_{j=1}^N
	\big[\na V^\eta((X_i^{\eta}-X_j^{\eta})(s))
	- \na V^\eta((\bar X_i^\eta-\bar X_j^\eta)(s))\big]\bigg|^k\dd s\bigg), \\
  Q_2 &= \E\bigg(N^{\alpha k}\int_0^t\bigg|\frac{1}{N}\sum_{j=1}^N
	\na V^\eta((\bar X_i^{\eta}-\bar X_j^{\eta})(s))
	- (\na V^\eta*\bar u^\eta)(s,\bar X_i^\eta(s))\bigg|^k\dd s\bigg).
\end{align*}

The term $Q_2$ is estimated by a law-of-large-numbers argument, i.e., we apply estimate \eqref{1.probA}. The idea is to split $\Omega$ into the set $\mathcal{B}:=\mathcal{B}_{\theta,\psi_\eta}^N$ with $\psi_\eta=\na V^\eta$ and its complement $\mathcal{B}^c$. By definition of $\mathcal{B}^c$, the integrand of $Q_2$ is bounded on this set by $CN^{-\theta k}$, while on the set $\mathcal{B}$, the term $Q_2$ is bounded by $\|\na V^\eta\|_{L^\infty(\R^d)}^k\le CN^{\beta(\lambda+1) k}$ (see Lemma \ref{lem.DkV}) and by $\Prob(\mathcal{B})\le C\|\na V^\eta\|_{L^\infty(\R^d)}^{2m}N^{2m(\theta-1/2)+1}$ (see Lemma \ref{lem.law} or estimate \eqref{1.probA}). Hence, 
\begin{align*}
  Q_2 &\le CN^{\alpha k}\big(N^{-\theta k} 
  + N^{\beta (\lambda+1) k + 2m(\theta+\beta (\lambda+1)-1/2)+1}\big).
\end{align*}
The exponent becomes negative (with an arbitrary value) if we choose $k\in\N$ and $m\in\N$ sufficiently large, leading to $Q_2\le CN^{-\gamma}$ for any $\gamma>0$. We refer to the proof of Theorem \ref{thm.prob} for more details on the choice of the parameters.

The estimate of $Q_1$ is more involved. The idea is to perform a Taylor expansion of $\na V^\eta$ around $\bar{X}_i^\eta-\bar{X}_i^\eta$ up to second order. After some manipulations, the most delicate term is
\begin{align}\label{1.Q3}
  Q_3 = \E\bigg(\int_0^{t\wedge\tau_\alpha} S_\alpha^k(s)
    \max_{i=1,\ldots,N}\bigg(\frac{1}{N}\sum_{j=1}^N\big|\mathrm{D}^2
    V^\eta(\bar{X}_i^\eta(s)-\bar{X}_j^\eta(s))\big|\bigg)^k\dd s\bigg).
\end{align} 
This expression can be controlled by estimating the convolution $|D^2 V^\eta| \ast \bar{u}^\eta $. Terms like $\mathrm{D}^2 V^\eta*\bar{u}^\eta
= V^\eta*\mathrm{D}^2\bar{u}^\eta$ are bounded if $\bar{u}^\eta$ is smooth and if $0<\lambda<d$ (see Lemma \ref{lem.DkVbaru}). However, since the absolute value in \eqref{1.Q3} is inside the convolution, we need the stronger condition $0<\lambda<d-2$ to estimate the modulus $|\mathrm{D}^2 V^\eta|$. Under this sub-Coulomb assumption, the expression $|\mathrm{D}^2 V^\eta|*\bar{u}^\eta$ is bounded. After further manipulations, we find that
\begin{align*}
  Q_3 \le C\||\mathrm{D}^2V^\eta|*\bar{u}^\eta\|_{L^\infty(\R^d)}^k
  \int_0^{t}\E(S_\alpha^k(s))\dd s 
  \le C\int_0^{t}\E(S_\alpha^k(s))\dd s.
\end{align*}
Thus, we obtain finally
\begin{equation*}
  \Prob\Big(\max_{i=1,\ldots,N}
  |X_i^{\eta}(t)-\bar X_i^\eta(t)|>N^{-\alpha}\Big)
  \le \E(S_\alpha^k(t)) \le CN^{-\gamma} 
  + C\int_0^{t}\E(S_\alpha^k(s))\dd s.
\end{equation*}
This leads -- after applying Gronwall's lemma -- to the desired result. Observe that the sub-Coulomb condition is only needed in the estimate of $Q_3$.

\subsubsection*{Reformulation and estimation}

After these preparations, we sketch the proof of Theorem \ref{thm.L2}. We wish to estimate $f^{\eta}-g^\eta
= Z^\eta*(\mu^\eta-\bar{u}^\eta)$ in the $L^2(\R^d)$ norm. For this, we reformulate this difference by using It\^o's formula for the stochastic process $X_i^{\eta}$ and by inserting definition \eqref{1.mu} of the empirical measure $\mu^{\eta}$ to find after some reformulations detailed in Section \ref{sec.quant} that
\begin{align}\label{1.KLM}
  \E\bigg(&\sup_{0<\tau<t}\|(f^{\eta}-g^\eta)(\tau)\|_{L^2(\R^d)}^2
  - \|(f^{\eta}-g^\eta)(0)\|_{L^2(\R^d)}^2\bigg) \\
  &+ 2\sigma\E\int_0^t\|\na(f^{\eta}-g^{\eta})\|_{L^2(\R^d)}^2\dd s
  \leq \frac{2\sigma t}{N}\big|\Delta V^\eta(0)\big| 
  + K(t) + L(t) + M(t), \nonumber
\end{align}
where
\begin{align*}
  K(t) &= \sqrt{8\sigma}\E\bigg(\sup_{0<\tau<t}
  \frac{1}{N}\sum_{i=1}^N\int_0^t
  \big(\nabla V^\eta *(\mu^{\eta}-\bar u^\eta)\big)
  (s,X_i^{\eta}(s))\dd W_i(s)\bigg), \\	
  L(t) &= 2\kappa\E\bigg(\sup_{0<\tau<t}\int_0^\tau\big\langle
  \mu^{\eta}-\bar u^\eta, (\na V^\eta*\bar u^\eta)
  \cdot(\na Z^\eta*(f^{\eta}-g^\eta))\big\rangle\dd s\bigg), \\
  M(t) &= 2\kappa\E\bigg(\sup_{0<\tau<t}\int_0^\tau\big\langle
  \mu^{\eta},|\na Z^\eta*(f^{\eta}-g^\eta)|^2\big\rangle\dd s\bigg),
\end{align*}
and $\langle\cdot,\cdot\rangle$ is the dual product. The idea is to estimate each of the terms on the right-hand side of \eqref{1.KLM} such that they are either of order $N^{-1/2-\eps}$ or can be absorbed by the gradient term on the left-hand side of \eqref{1.KLM}. Observe that the condition $\sigma>0$ is needed here; see Remark \ref{rem.sigma} for a generalization.

By Lemma \ref{lem.DkV} (bound for $\mathrm{D}^k V^\eta$), the first term on the right-hand side of \eqref{1.KLM} is bounded by $CN^{\beta(\lambda+2)-1}$. This expression is of order $N^{-1/2-\eps}$ for some $\eps>0$ if we assume that $\beta<1/(2\lambda+4)$. 
By the Burkholder--Davis--Gundy inequality, the stochastic integral 
$K(t)$ can be estimated by $C/N+M(t)$, such that it remains to
estimate $L(t)$ and $M(t)$. 

For these terms, we employ similar techniques as described in the previous subsections. Indeed, we add and subtract several terms in $M(t)$ to split the difficulty into mean-field estimates between particle system \eqref{1.X} and intermediate system \eqref{1.barX} as well as  law-of-large numbers estimates between system \eqref{1.barX} and intermediate PDE \eqref{1.baru}. We apply the law-of-large-numbers argument of Lemma \ref{lem.law} and the mean-field estimate of Theorem \ref{thm.prob} applied to the set $\mathcal{C}_\alpha=\{\omega\in\Omega:\max_{i=1,\ldots,N}|X_i^\eta-\bar{X}_i^\eta|>N^{-\alpha}\}$. The arguments are possible by choosing $\beta>0$ sufficiently small (with an explicit bound depending on $\lambda$) and by taking $1/4 + \beta c_1<\alpha<1/2 - \beta c_2$ for some positive constants $c_1$ and $c_2$ depending on $d$ and $\lambda$. After some computations, this leads to
$$
  M(t) \le C(t)N^{-1/2-\eps}, \quad
  L(t) \le C(t)N^{-1/2-\eps} 
  + \sigma\mathbb{E}\int_0^t\|\na(f^\eta-g^\eta)\|_{L^2(\R^d)}^2\dd s,
$$
and the last integral can be absorbed by the left-hand side of \eqref{1.KLM}. Putting the estimates together, we deduce from \eqref{1.KLM} that
\begin{align*}
  \E\bigg(&\sup_{0<\tau<t}\|(f^{\eta}-g^\eta)(\tau)\|_{L^2(\R^d)}^2
  \bigg) + \sigma\E\int_0^t\|\na(f^{\eta}-g^{\eta})
  \|_{L^2(\R^d)}^2\dd s \\
  &\le C(T)N^{-1/2-\eps} + \E\|(f^{\eta}-g^\eta)(0)\|_{L^2(\R^d)}^2,
\end{align*}
and an estimate of $(f^\eta-g^\eta)(0)$ concludes the proof of Theorem \ref{thm.L2}. 

The proof of Theorem \ref{thm.L2} heavily relies on the mean-field estimate of Theorem \ref{thm.prob} and hence on the sub-Coulomb condition $0<\lambda<d-2$. Whenever the convergence in probability of Theorem \ref{thm.prob} can be proved, we infer the result of Theorem \ref{thm.L2}. In other words, the sub-Coulomb restriction is not directly needed in the proof of Theorem \ref{thm.L2}.

\section{Law of large numbers}\label{sec.law}

We prove two estimates in probability for the i.i.d.\ processes $\bar{X}_i^\eta(t)$ and their associated laws $\bar{u}^\eta(t)$. 

\begin{lemma}[Law of large numbers]\label{lem.law}
Let $(\bar{X}^\eta)_{i=1}^N$ be the solution to system \eqref{1.barX} and let $\bar u^\eta$ be the density function associated to each $\bar{X}^\eta_i$, i.e.\ the solution to \eqref{1.baru}. Given $\theta\ge 0$ and $\phi_\eta\in L^\infty(\R^d)$, $\psi_\eta\in L^\infty(\R^d;\R^n)$ with $n\in\{1,d,d\times d\}$, we define the sets
\begin{align*}
  \mathcal{A}_{\theta,\phi_\eta}^N(t) 
  &:= \bigg\{\omega\in\Omega:
  \bigg|\frac{1}{N}\sum_{i=1}^N\phi_\eta(\bar{X}_i^\eta(t,\omega)) 
  - \int_{\R^d}\phi_\eta(x)\bar{u}^\eta(t,x)\dd x\bigg|>N^{-\theta}\bigg\}, \nonumber \\ 
  \mathcal{B}_{\theta,\psi_\eta}^N(t) 
  &:= \bigcup_{i=1}^N\bigg\{\omega\in\Omega:
  \bigg|\frac{1}{N}\sum_{j=1}^N\psi_\eta
  \big(\bar{X}_i^\eta(t)-\bar{X}_j^\eta(t)\big)
  - (\psi_\eta*\bar{u}^\eta)(\bar{X}_i^\eta(t))\bigg| > N^{-\theta}\bigg\}. 
\end{align*}
Then, for every $m\in\N$ and $T>0$, there exists $C(m)>0$ such that for all $0<t<T$,
\begin{align*}
  \Prob(\mathcal{A}_{\theta,\phi_\eta}(t)) 
	&\le C(m)\|\phi_\eta\|_{L^\infty(\R^d)}^{2m}N^{2m(\theta-1/2)}, \\
	\Prob(\mathcal{B}_{\theta,\psi_\eta}(t)) 
	&\le C(m)\|\psi_\eta\|_{L^\infty(\R^d)}^{2m}N^{2m(\theta-1/2)+1}.
\end{align*}
\end{lemma}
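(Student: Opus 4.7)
The plan is to use Markov's inequality in the $2m$-th-moment form
$$
  \Prob(|X|>\lambda)\le\lambda^{-2m}\E|X|^{2m}
$$
for an integer $m$ chosen later, and then exploit the i.i.d.\ structure of the particles $\bar X_i^\eta$ (guaranteed by assumption (A2) and system \eqref{1.barX}) together with a combinatorial counting argument to control the resulting $2m$-th moment.

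For the first estimate I set $h_i:=\phi_\eta(\bar X_i^\eta(t))-\int_{\R^d}\phi_\eta(x)\bar u^\eta(t,x)\dd x$; these are i.i.d., centered, and bounded by $2\|\phi_\eta\|_{L^\infty(\R^d)}$. Markov's inequality yields
$$
  \Prob(\mathcal{A}_{\theta,\phi_\eta}^N(t))\le N^{2m(\theta-1)}\E\bigg|\sum_{i=1}^N h_i\bigg|^{2m},
$$
and I expand the moment as $\sum_{(i_1,\ldots,i_{2m})\in\{1,\ldots,N\}^{2m}}\E[h_{i_1}\cdots h_{i_{2m}}]$. Independence and $\E h_i=0$ force any multi-index in which some distinct value of $i$ appears only once to contribute zero, so only those multi-indices in which every value is repeated \emph{at least twice} survive. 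Such tuples use at most $m$ distinct indices in $\{1,\ldots,N\}$, so their cardinality is bounded by $C(m)N^m$, and each surviving summand is bounded by $(2\|\phi_\eta\|_{L^\infty(\R^d)})^{2m}$. This delivers $\E|\sum_i h_i|^{2m}\le C(m)\|\phi_\eta\|_{L^\infty(\R^d)}^{2m}N^m$ and the first claim.

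For the second estimate I apply a union bound over $i=1,\ldots,N$ and, for each fixed $i$, condition on $\bar X_i^\eta$. Set $h_{ij}:=\psi_\eta(\bar X_i^\eta-\bar X_j^\eta)-(\psi_\eta*\bar u^\eta)(\bar X_i^\eta)$; the family $\{h_{ij}\}_{j\ne i}$ is, conditionally on $\bar X_i^\eta$, i.i.d.\ and centered with $|h_{ij}|\le 2\|\psi_\eta\|_{L^\infty(\R^d)}$, while $h_{ii}$ is a bounded remainder handled trivially. Repeating the moment expansion and counting on the conditional sum $\sum_{j\ne i}h_{ij}$ (treating the vector components separately when $n>1$, which costs only an $n$-dependent constant) and applying Markov produces, for each fixed $i$, a probability bound of order $C(m)\|\psi_\eta\|_{L^\infty(\R^d)}^{2m}N^{2m(\theta-1/2)}$; taking expectation over $\bar X_i^\eta$ and then the union bound over $i$ supplies the extra factor of $N$ appearing in the stated inequality.

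The only nontrivial technical point is the combinatorial counting: the $N^m$ bound (rather than the trivial $N^{2m}$) on $2m$-tuples in which every index is repeated is precisely what upgrades the raw Markov estimate $N^{2m\theta}$ to the central-limit-type scaling $N^{2m(\theta-1/2)}$. The argument uses only boundedness and the i.i.d.\ property and is insensitive to any regularity of $\bar u^\eta$ or to the sub-Coulomb restriction, so the same lemma will be applicable later with scalar $\phi_\eta$ as well as vector-valued $\psi_\eta$ (e.g.\ $\psi_\eta=\na V^\eta$ or $\psi_\eta=\mathrm{D}^2V^\eta$), as needed in the proofs of Theorems \ref{thm.prob} and \ref{thm.L2}.
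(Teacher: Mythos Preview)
Your proposal is correct and follows essentially the same approach as the paper: Markov's inequality at the $2m$-th moment, expansion of the moment into a sum over multi-indices, the observation that any multi-index containing a singleton contributes zero by independence and centering, and the combinatorial bound $C(m)N^m$ on the surviving multi-indices. The only cosmetic difference is that you phrase the $\mathcal{B}$-estimate via conditioning on $\bar X_i^\eta$ (making $\{h_{ij}\}_{j\ne i}$ conditionally i.i.d.\ and centered), whereas the paper writes out the corresponding Fubini integral directly; these are equivalent formulations of the same argument.
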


\begin{proof}
For simplicity, we denote $\psi:= \psi_\eta$ and $\phi:=\phi_\eta$ but keep in mind those dependencies. To estimate the probability of $\mathcal{B}_{\theta,\psi}(t)$, we set
$$
  h_{ij}(t,\omega) := \psi\big(\bar X_i^\eta(t,\omega)-\bar X_j^\eta(t,\omega)\big) - (\psi*\bar u^\eta)(\bar X_i^\eta(t,\omega))
$$
for $t\ge 0$, $\omega\in\Omega$, and $i,j=1,\ldots,N$. Then
$\mathcal{B}_{\theta,\psi}^N(t)=\textstyle\bigcup_{i=1}^N\mathcal{B}_i^N(t)$, where
$$
  \mathcal{B}_i^N(t) := \bigg\{\omega\in\Omega:
  \bigg|\frac{1}{N}\sum_{j=1}^N h_{ij}(t,\omega)\bigg| 
  > N^{-\theta}\bigg\}.
$$
It follows from the Markov inequality for $m\in\N$ that
\begin{align}\label{2.Btheta}
  \Prob(\mathcal{B}_{\theta,\psi}^N(t)) 
  &\le N\max_{i=1,\ldots,N}\Prob(\mathcal{B}_i^N(t)) 
  \le N^{2m\theta+1}\max_{i=1,\ldots,N}
  \E\bigg(\bigg|\frac{1}{N}\sum_{j=1}^N h_{ij}(t)\bigg|^{2m}\bigg) \\
  &= N^{2m(\theta-1)+1}\max_{i=1,\ldots,N}
  \E\bigg(\bigg|\sum_{j=1}^N h_{ij}(t)\bigg|^{2m}\bigg) \nonumber \\
  &= N^{2m(\theta-1)+1}\max_{i=1,\ldots,N}
  \E\bigg(\bigg(\sum_{j,k=1}^N h_{ij}(t) h_{ik}(t)\bigg)^m\bigg). 
  \nonumber 
\end{align}

We distinguish two cases for the summands on the right-hand side. First, we consider summands for which there exists an index  $j\in\{1,\ldots,N\}$ such that $h_{ij}$ appears only once in the product (recall that $i=1,\ldots,N$ is fixed). We claim that the fact that $\bar X_i^\eta$ and $\bar X_j^\eta$ are i.i.d.\ for $i\neq j$ implies that these summands have zero mean, i.e.
$$
  \E\bigg(h_{ij}(t)\prod_{n=1,\, k_n\neq j}^{2m-1}h_{i k_n}(t)\bigg) = 0.
$$
To prove the claim, we assume that $\psi$ is scalar. Our argument can be extended to vector- or matrix-valued functions by taking the sum over their components. Let $K$ denote the set of different indices $k_\ell$ appearing in the product $\prod_{k_n\neq j}h_{i k_n}$, and for each $\ell\in\{1,\ldots,N\}$, let $\alpha_\ell$ denote its multiplicity in this product. Then, by Fubini's theorem,
\begin{align*}
  \E\bigg(h_{ij}(t)&\prod_{n=1,\, k_n\neq j}^{2m-1}h_{i k_n}(t)\bigg)
  = \int_{\R^d}\cdots\int_{\R^d}\bigg(\int_{\R^d}\big(
  \psi(x_i-x_j)-(\psi*\bar u^\eta)(x_i)\big)\bar u^\eta(x_j)
  \dd x_j\bigg) \\
  &\times\prod_{\ell\in K}\big(\psi(x_i-x_\ell)-(\psi*\bar u^\eta)(x_i)
  \big)^{\alpha_\ell}\bar u^\eta(x_\ell)\bar u^\eta(x_i)\dd x_i
  \bigotimes_{\ell\in K}\dd x_\ell.
\end{align*}
Since $\|\bar u^\eta\|_{L^1(\R^d)}=1$, the inner integral with index $j$ vanishes,
$$
  \int_{\R^d}\big(\psi(x_i-x_j)-(\psi*\bar u^\eta)(x_i)\big)
  \bar u^\eta(x_j)\dd x_j
  = (\psi*\bar u^\eta)(x_i) - (\psi*\bar u^\eta)(x_i)
  \int_{\R^d}\bar u^\eta(x_j)\dd x_j = 0,
$$
which proves the claim.

Next, we consider summands in which each term $h_{ij}$ appears at least twice in the product. We do not know whether those terms have zero expectation, but we can estimate the number of (potentially) nonvanishing terms. Indeed, we collect these summands in the set
$$
  \mathcal{N}_i := \bigg\{\prod_{n=1}^{2m}h_{ij_n}:\mbox{all indices }
  j_n\mbox{ appear at least twice}\bigg\}.
$$
We claim that the cardinality $|\mathcal{N}_i|$ of this set is, up to some factor, bounded by $N^m$. Indeed, let $A$ be the set of indices in the products $\prod_{\alpha\in A} h_{\alpha}^{r_\alpha}\in\mathcal{N}_i$ with $r_\alpha\in\{2,\ldots,2m\}$. Since all appearing indices appear at least twice, we have $|A|\le m$.

To estimate the cardinality of $\mathcal{N}_i$, consider first the case
$|A|=m$. This means that we have $m$ different indices, i.e., each
index appears exactly twice. We can choose $\tbinom{N}{m}$ such sets of indices, which is bounded by $C(m)N^m$ for some constant $C(m)>0$. We also need to take into account all permutations of such a selection of indices. Then the size of the set of products $\prod_{\alpha\in A}h_{\alpha}^{r_\alpha}$ can be estimated from above by $(2m)!C(m)N^m$, which is again of order $C(m)N^m$. If $|A|=m-n$ for some $n\in\{0,\ldots,m-1\}$, we can argue in a similar way, and the cardinality of the set of products is estimated by $C(m)N^{m-n}$. Adding all these cardinalities, we find that $|\mathcal{N}_i|\le C(m)(N^m+N^{m-1}+\cdots+N)\le C(m)N^m$. 

The expectation of $|\sum_{j=1}^N h_{ij}|^{2m}$ can be written as the sum of expectations of products of the form $\prod_{k=1}^{2m}h_{ij_k}$, which are individually bounded from above by $C\|\psi\|_{L^\infty(\R^d)}^{2m}$. This leads to
$$
  \E\bigg(\bigg|\sum_{j=1}^N h_{ij}\bigg|^{2m}\bigg)
  \le C|\mathcal{N}_i|\|\psi\|_{L^\infty(\R^d)}^{2m}
  \le C(m)N^m\|\psi\|_{L^\infty(\R^d)}^{2m}.
$$
We infer from \eqref{2.Btheta} that 
$$
  \Prob(\mathcal{B}_{\theta,\psi}^N(t))
  \le C(m)N^{2m(\theta-1)+1} N^m \|\psi\|_{L^\infty(\R^d)}^{2m}
  = C(m)N^{2m(\theta-1/2)+1}\|\psi\|_{L^\infty(\R^d)}^{2m}.
$$

It remains to show the estimate for $\mathcal{A}_{\theta,\phi}^N(t)$. 
By Markov's inequality,
\begin{align*}
  \Prob(\mathcal{A}_{\theta,\phi}^N(t)) 
  &\le N^{2m\theta}\E\bigg(\bigg|\frac{1}{N}
  \sum_{i=1}^N\phi(\bar X_i^\eta(t)) - \int_{\R^d}\phi(x)
  \bar u^\eta(t,x)\dd x\bigg|^{2m}\bigg) \\
  &\le N^{2m(\theta-1)}\E\bigg(\bigg|\sum_{i=1}^N h_i\bigg|^{2m}\bigg)
  = N^{2m(\theta-1)}\E\bigg(\bigg|\sum_{i,j=1}^N h_ih_j\bigg|^m\bigg),
\end{align*}
where $h_i(t) := \phi(\bar X_i^\eta(t)) - \int_{\R^d}\phi(x)\bar u^\eta(t,x)\dd x$. Similarly as before, the expectation of all terms in the sum such that one index $i\in\{1,\ldots,N\}$ appears only once vanish,
$$
  \E\bigg(h_i\sum_{n=1,\, k_n\neq i}^{2m-1}h_{k_n}\bigg)
  = \E(h_i)\E\bigg(\sum_{n=1,\, k_n\neq i}^{2m-1}h_{k_n}\bigg) = 0,
$$
since $(h_i)_{i=1}^N$ is i.i.d.\ and $\E(h_i)=0$. To estimate the remaining terms, we introduce
$$
  \mathcal{N} := \bigg\{\prod_{n=1}^{2m} h_{i_n}:\mbox{all indices }i_n
  \mbox{ appear at least twice}\bigg\}.
$$
Its size can be estimated as before, leading to
$|\mathcal{N}|\le C(m)(N^m+N^{m-1}+\cdots+N)\le C(m)N^m$. Finally, we deduce from $\E(\prod_{n=1}^{2m}h_{i_n})\le C\|\phi\|_{L^\infty(\R^d)}^{2m}$ that
$$
  \Prob(\mathcal{A}_{\theta,\phi}^N(t)) 
  \le C(m)N^{2m(\theta-1/2)}\|\phi\|_{L^\infty(\R^d)}^{2m}.
$$
This finishes the proof.
\end{proof}


\section{Convergence in probability}\label{sec.prob}

We split the proof of Theorem \ref{thm.prob} in several parts. 

\subsection{Preparations}

We start with some definitions. Let $\alpha>0$ be given as in the theorem and let $k\in\N$. We define the stopping time
$$
  \tau_\alpha(\omega) := \inf\Big\{t\in(0,T):\max_{i=1,\ldots,N}
  |(X_i^\eta-\bar{X}_i^\eta)(t,\omega)| \ge N^{-\alpha}\Big\}\wedge T
$$
and the random variable
$$
  S_\alpha^k(t) := \Big(N^{\alpha}\max_{i=1,\ldots,N}
  |(X_i^\eta-\bar{X}_i^\eta)(t\wedge\tau_\alpha)|\Big)^k \le 1.
$$
Then $B_\alpha(t):=\{\omega\in\Omega:S_\alpha^k(t)=1\}$ describes the set of $\omega\in \Omega$ such that $t\leq \tau_\alpha(\omega)$. This set does not depend on $k$, since $S_\alpha^k(t)=1$ is equivalent to $S^k_\alpha(t)^{1/k}=1$, and $S^k_\alpha(t)^{1/k}$ does not depend on $k$. It follows from the continuity of the paths of $X_i^\eta$ and $\bar{X}_i^\eta$ and the fact that if $\max_{i=1,\ldots,N}|X_i^\eta(t, \omega)-\bar{X}_i^\eta(t,\omega)|
> N^{-\alpha}$ for a fixed $t>0$ then $t>\tau_\alpha(\omega)$, that
\begin{align*}
  \Prob\Big(\max_{i=1,\ldots,N}|(X_i^\eta-\bar{X}_i^\eta)(t)| 
  > N^{-\alpha}\Big)
  &\le \Prob\Big(\max_{i=1,\ldots,N}
  |(X_i^\eta-\bar{X}_i^\eta)(t\wedge\tau_\alpha)|=N^{-\alpha}\Big) \\ 
  &= \Prob(B_\alpha(t)) = \Prob(S_\alpha^k(t)=1)\le\E(S_\alpha^k(t)),
\end{align*}
where the last estimate follows from Markov's inequality.

Now, if we show that for every $\gamma>0$ and $T>0$, there exists $k\in\N$ and $C=C(\gamma,k,T)>0$ such that $\E(S_\alpha^k(t))\le CN^{-\gamma}$, the proof is finished. To prove this claim, we insert the integral formulations \eqref{1.X} and \eqref{1.barX} and add and subtract $\na V^\eta(\bar{X}_i^\eta-\bar{X}_j^\eta)$. Indeed, it holds for $i=1,\ldots,N$ that
\begin{align}\label{3.diffX}
  |(&X_i^\eta-\bar{X}_i^\eta)(t\wedge\tau_\alpha)|^k \\
  &\le C(k,T)\int_0^{t\wedge\tau_\alpha}\bigg|\frac{1}{N}\sum_{j=1}^N
  \na V^\eta(X_i^\eta(s)-X_j^\eta(s))
  - (\na V^\eta*\bar{u}^\eta)(s,\bar{X}_i^\eta(s))\bigg|^k\dd s 
  \nonumber \\
  &\le C(k,T)\int_0^{t\wedge\tau_\alpha}
  \big(|I_{1,i}(s)|^k+|I_{2,i}(s)|^k\big)\dd s, \nonumber 
\end{align}
where
\begin{align*}
  I_{1,i}(s) &= \bigg|\frac{1}{N}\sum_{j=1}^N\big[
  \na V^\eta(X_i^\eta(s)-X_j^\eta(s))
  - \na V^\eta(\bar{X}_i^\eta(s)-\bar{X}_j^\eta(s))\big]\bigg|, \\
  I_{2,i}(s) &= \bigg|\frac{1}{N}\sum_{j=1}^N
  \na V^\eta(\bar{X}_i^\eta(s)-\bar{X}_j^\eta(s)) - (\na V^\eta*\bar{u}^\eta)(s,\bar{X}_i^\eta(s))\bigg|.
\end{align*}

\subsection{Estimate of $I_{2,i}(t)$}

The term $I_{2,i}(t)$ is estimated by a law-of-large numbers argument. We apply Lemma \ref{lem.law} with $\psi_\eta=\na V^\eta$ and $\theta>0$, which will be chosen at the end of the proof. With the notation of Lemma \ref{lem.law} we have $\mathcal{B}_{\theta,\na V^\eta}^N(s) = \bigcup_{i=1}^N\{I_{2,i}(s)>N^{-\theta}\}$. As we want to bound $\E(S_\alpha^k(t))$, we compute the expectation of $N^{\alpha k}\max_{i=1,\ldots,N}\int_0^{t\wedge\tau_\alpha}|I_{2,i}(s)|^k\dd s$ by splitting $\Omega$ into the two sets $\mathcal{B}_{\theta,\na V^\eta}^N(s)$ and its complement $\mathcal{B}_{\theta,\na V^\eta}^N(s)^c$. Taking into account that $I_{2,i}(s)\le N^{-\theta}$ on $\mathcal{B}_{\theta,\na V^\eta}^N(s)^c$ for $i=1,\ldots,N$ and applying Lemma \ref{lem.law}, for any $m\in\N$,
\begin{align*}
  \E\bigg(&N^{\alpha k}\max_{i=1,\ldots,N}
  \int_0^{t\wedge\tau_\alpha}|I_{2,i}(s)|^k\dd s\bigg) \\
  &\le \E\bigg(N^{\alpha k}\max_{i=1,\ldots,N}
  \int_0^{t\wedge\tau_\alpha}|I_{2,i}(s)|^k
  \mathrm{1}_{\mathcal{B}_{\theta,\na V^\eta}^N(s)^c}\dd s\bigg)\\
  &\phantom{xx}+ \E\bigg(N^{\alpha k}\max_{i=1,\ldots,N}\int_0^{t\wedge\tau_\alpha}|I_{2,i}(s)|^k
  \mathrm{1}_{\mathcal{B}_{\theta,\na V^\eta}^N(s)}\dd s\bigg) \\
  &\le TN^{\alpha k}N^{-\theta k} + C(T)N^{\alpha k}
  \|\na V^\eta\|_{L^\infty(\R^d)}^k \sup_{0<s<T}
  \Prob({\mathcal{B}_{\theta,\na V^\eta}^N(s)}) \\
  &\le TN^{(\alpha-\theta)k} + C(m,T)N^{\alpha k}\|\na V^\eta\|_{L^\infty(\R^d)}^{k+2m}N^{2m(\theta-1/2)+1},
\end{align*}
where in the last but one step we have applied Young's convolution inequality to estimate 
$$
  \|\na V^\eta*\bar{u}^\eta\|_{L^\infty(\R^d)}
  \le \|\na V^\eta\|_{L^\infty(\R^d)}\|\bar{u}^\eta\|_{L^1(\R^d)}
  \le C\|\na V^\eta\|_{L^\infty(\R^d)}.
$$
It follows from $\|\na V^\eta\|_{L^\infty(\R^d)}\le CN^{\beta(\lambda+1)}$ (Lemma \ref{lem.DkV}) that
\begin{align}\label{3.I2i}
  \E\bigg(&N^{\alpha k}\max_{i=1,\ldots,N}
  \int_0^{t\wedge\tau_\alpha}|I_{2,i}(t)|^k\dd s\bigg) \\
  &\le C(k,m,T)\big(N^{(\alpha-\theta)k}
  + N^{\alpha k+\beta(\lambda +1)(k+2m)+m(2\theta-1)+1}
  \big). \nonumber
\end{align}
We will determine the parameters $k$ and $m$ at the end of the proof.

\subsection{Estimate of the term with integrand $I_{1,i}(t)$}

This estimate is more technical than the previous one. We perform a Taylor expansion of $\na V^\eta$ around $(\bar{X}_i^\eta-\bar{X}_j^\eta)(s)$ with a linear term and a quadratic remainder:
\begin{align}\label{3.I1i}
  \mathbb{E}\Big(&N^{\alpha k} \int_{0}^{t \wedge \tau_\alpha} \max_{i=1,\ldots,N}I_{1,i}(s)^k \dd s\Big) \\
  &\le C(k)\E\bigg(N^{\alpha k}\int_0^{t\wedge\tau_\alpha}
  \max_{i=1,\ldots,N}\bigg|\frac{1}{N}\sum_{j=1}^N
  \mathrm{D}^2 V^\eta(\bar{X}_i^\eta(s)-\bar{X}_j^\eta(s)) \nonumber \\
  &\phantom{xx}\times
  \big((X_i^\eta-X_j^\eta)(s)
  - (\bar{X}_i^\eta-\bar{X}_j^\eta)(s)\big)\bigg|^k\dd s
  \bigg) \nonumber \\
  &\phantom{xx}{}+ C(k)\|\mathrm{D}^3 V^\eta\|_{L^\infty(\R^d)}^k
  \E\bigg(N^{\alpha k}
  \int_0^{t\wedge\tau_\alpha}\max_{i=1,\ldots,N}\frac{1}{N}\sum_{j=1}^N
  \big|\big((X_i^\eta-\bar{X}_i^\eta)(s) \nonumber \\
  &\phantom{xx}{}- (X_j^\eta - \bar{X}_j^\eta)(s)\big)
  \big|^{2k}\dd s\bigg). \nonumber
\end{align}
We collect in the first term on the right-hand side the differences with the same index, i.e. $(X_i^\eta-X_j^\eta) - (\bar{X}_i^\eta-\bar{X}_j^\eta)
= (X_i^\eta-\bar{X}_i^\eta) - (X_j^\eta-\bar{X}_j^\eta)$ such that
\begin{align*}
  &\mathbb{E}\Big(N^{\alpha k} \int_{0}^{t \wedge \tau_\alpha} \max_{i=1,\ldots,N}I_{1,i}(s)^k \dd s\Big)
  \le C(k)(I_{11}+I_{12}+I_{13})(t), \quad\mbox{where} \\
  & I_{11}(t) = \E\bigg(N^{\alpha k}\int_0^{t\wedge\tau_\alpha}
  \max_{i=1,\ldots,N}\bigg|\frac{1}{N}\sum_{j=1}^N\mathrm{D}^2
  V^\eta(\bar{X}_i^\eta-\bar{X}_j^\eta)
  (X_i^\eta-\bar{X}_i^\eta)(s)\bigg|^k\dd s\bigg), \\
  & I_{12}(t) = \E\bigg(N^{\alpha k}\int_0^{t\wedge\tau_\alpha}
  \max_{i=1,\ldots,N}\bigg|\frac{1}{N}\sum_{j=1}^N\mathrm{D}^2 
  V^\eta(\bar{X}_i^\eta-\bar{X}_j^\eta)
  (X_j^\eta-\bar{X}_j^\eta)(s)\bigg|^k\dd s\bigg), \\
  & I_{13}(t) = \|\mathrm{D}^3 V^\eta\|_{L^\infty(\R^d)}^k
  \E\bigg(N^{\alpha k}\int_0^{t}\max_{i=1,\ldots,N}
  \big|(X_i^\eta-\bar{X}_i^\eta)(s\wedge\tau_\alpha)\big|^{2k}
  \dd s\bigg).
\end{align*}

We start with the term $I_{13}$. It follows from Fubini's theorem,
the estimate $\|\mathrm{D}^3 V^\eta\|_{L^\infty(\R^d)}$ $\le CN^{\beta(\lambda+3)}$ from Lemma \ref{lem.DkV}, the definition of $S_\alpha^k(s)$, and $S_\alpha^k(s)^2\le S_\alpha^k(s)$ (since $S_\alpha^k(s) \leq 1$) that
\begin{equation}\label{3.I13}
  I_{13}(t) \le C(k)N^{\beta(\lambda+3)k}
  \int_0^t\E\big(N^{-\alpha k}S_\alpha^k(s)^2\big)\dd s
  \le C(k)N^{\beta(\lambda+3)k-\alpha k}\int_0^t\E(S_\alpha^k(s))\dd s.
\end{equation}
Note that we need the definition of the stopping time $\tau_\alpha$, which
guarantees that $S_\alpha^k(t)\le 1$. 

Next, we estimate
$I_{11}(t)\le I_{111}(t)+I_{112}(t)$ by adding and subtracting
$(\mathrm{D}^2 V^\eta*\bar{u}^\eta)(\bar{X}_i^\eta)$:
\begin{align*}
  I_{111}(t) &\le \E\bigg(\int_0^{t\wedge\tau_\alpha}S_\alpha^k(s)
  \max_{i=1,\ldots,N}\bigg|\frac{1}{N}\sum_{j=1}^N\mathrm{D}^2 V^\eta(\bar{X}_i^\eta(s)-\bar{X}_j^\eta(s)) \\
  &\phantom{xx}{}- (\mathrm{D}^2 V^\eta 
  * \bar{u}^\eta)(s,\bar{X}_i^\eta(s))\bigg|^k\dd s\bigg), \\
  I_{112}(t) &\le \E\bigg(\int_0^{t\wedge\tau_\alpha}S_\alpha^k(s)
  \max_{i=1,\ldots,N}\big|(\mathrm{D}^2 V^\eta 
  * \bar{u}^\eta)(s,\bar{X}_i^\eta(s))\big|^k\dd s\bigg).
\end{align*}
For $I_{111}(t)$, we apply Lemma \ref{lem.law} for $m\in\N$ (which will be chosen at the end of the proof) with $\psi_\eta=\mathrm{D}^2 V^\eta$ and $\theta=0$ and split $\Omega$ into $\mathcal{B}_{0,\mathrm{D}^2 V^\eta}(s)$ and $\mathcal{B}_{0,\mathrm{D}^2 V^\eta}(s)^c$. Then Fubini's theorem and the bound $\|\mathrm{D}^2 V^\eta\|_{L^\infty(\R^d)}\le CN^{\beta(\lambda+2)}$ from Lemma \ref{lem.DkV} lead to
\begin{align*}
  I_{111}(t) &\le \E\bigg(\int_0^{t\wedge\tau_\alpha}S_\alpha^k(s)
  \max_{i=1,\ldots,N}\bigg|\frac{1}{N}\sum_{j=1}^N\mathrm{D}^2 V^\eta(\bar{X}_i^\eta(s)-\bar{X}_j^\eta(s)) \\
  &\phantom{xxxx}- (\mathrm{D}^2 V^\eta * \bar{u}^\eta)
  (s,\bar{X}_i^\eta(s))\bigg|^k
  \mathrm{1}_{B_{0,\mathrm{D}^2 V^\eta}(s)^c}\dd s\bigg) \\
  &\phantom{xx}+ C(T)\|\mathrm{D}^2 V^\eta\|_{L^\infty(\R^d)}^k
  \sup_{0<s<T}\Prob(B_{0,\mathrm{D}^2 V^\eta}(s)) \\
  &\le N^{-0}\int_0^t\E(S_\alpha^k(s))\dd s 
  + C(m,T)N^{\beta (\lambda +2)k} N^{2m\beta(\lambda+2)}
  N^{2m(0-1/2)+1}.
\end{align*}
The estimate for $I_{112}(t)$ simply follows from Fubini's theorem 
and the bound for $\mathrm{D}^2 V^\eta*\bar{u}^\eta$ from Lemma \ref{lem.DkVbaru}, yielding
$$
  I_{112}(t) \le \|\mathrm{D}^2V^\eta * \bar{u}^\eta
  \|_{L^\infty(\R^d)}^k\int_0^t\E(S_\alpha^k(s))\dd s
  \le C(k)\int_0^t\E(S_\alpha^k(s))\dd s.
$$
We conclude that
\begin{equation}\label{3.I11}
  I_{11}(t) \le C(k)\int_0^t\E(S_\alpha^k(s))\dd s 
  + C(k,m,T)N^{\beta(\lambda+2)(k+2m)-m+1},
\end{equation}
where the constant $C(k)>0$ depends on the $L^\infty(0,T;L^{\infty}(\R^d)\cap L^{1}(\R^d))$ norm
of $\bar{u}^\eta$, which is bounded uniformly in $\eta$ thanks to Lemma \ref{lem.wLinfty}.

Finally, we estimate $I_{12}(t)$ by first taking the modulus of $X_j^\eta(s)-\bar{X}_j^\eta(s)$ inside the sum:
\begin{align*}
  I_{12}(t) &\le \E\bigg(N^{\alpha k}\int_0^{t\wedge\tau_\alpha}
  \max_{i=1,\ldots,N}\bigg(\frac{1}{N}\sum_{j=1}^N\big|\mathrm{D}^2 
  V^\eta(\bar{X}_i^\eta(s)-\bar{X}_j^\eta(s))\big|
  \big|X_j^\eta(s)-\bar{X}_j^\eta(s)\big|\bigg)^k\dd s\bigg) \\
  &\le \E\bigg(\int_0^{t\wedge\tau_\alpha} S_\alpha^k(s)
  \max_{i=1,\ldots,N}\bigg(\frac{1}{N}\sum_{j=1}^N\big|\mathrm{D}^2
  V^\eta(\bar{X}_i^\eta(s)-\bar{X}_j^\eta(s))\big|\bigg)^k\dd s\bigg),
\end{align*}
where we used the definition of $S_\alpha^k(s)$ for $s<\tau_\alpha$. Similarly as in the estimate for $I_{11}(t)$, we add and subtract
$|\mathrm{D}^2 V^\eta|*\bar{u}^\eta(s,\bar{X}_i^\eta(s))$, which yields $I_{12}(t)\le C(I_{121}(t)+I_{122}(t))$, where
\begin{align*}
  I_{121}(t) &= \E\bigg(\int_0^{t\wedge\tau_\alpha} 
  S_\alpha^k(s)\max_{i=1,\ldots,N}
  \bigg|\frac{1}{N}\sum_{j=1}^N\big|\mathrm{D}^2 V^\eta
  (\bar{X}_i^\eta(s)-\bar{X}_j(s))\big| \\
  &\phantom{xx}{}- \big(|\mathrm{D}^2 V^\eta|*\bar{u}^\eta\big)
  (s,\bar{X}_i^\eta(s))\bigg|^k\dd s\bigg), \\
  I_{122}(t) &= \E\bigg(\int_0^{t\wedge\tau_\alpha}
  S_\alpha^k(s)\max_{i=1,\ldots,N}
  \big((|\mathrm{D}^2 V^\eta|*\bar{u}^\eta)(s,\bar{X}_i^\eta(s))
  \big)^k\dd s\bigg).
\end{align*}
Lemma \ref{lem.law} with $\psi_\eta=|\mathrm{D}^2 V^\eta|$ and $\theta=0$ and the property $S_\alpha^k(t)\le 1$ imply that, for any $m\in\N$,
\begin{align*}
  I_{121}(t)&\le \int_0^t\E(S_\alpha^k(s))\dd s 
  + C(T)\|\mathrm{D}^2 V^\eta\|_{L^\infty(\R^d)}^k
  \sup_{0<s<T}\Prob(B^N_{0,|\mathrm{D}^2 V^\eta|}(s)) \\
  &\le \int_0^t\E(S_\alpha^k(s))\dd s 
  + C(T)\|\mathrm{D}^2 V^\eta\|_{L^\infty(\R^d)}^{k+2m}N^{2m(0-1/2)+1}
  \nonumber \\
  &\le \int_0^t\E(S_\alpha^k(s))\dd s 
  + C(m,T)N^{\beta(\lambda+2)(k+2m)-m+1}. \nonumber
\end{align*}
By Lemma \ref{lem.DkVbaru}, $|\mathrm{D}^2 V^\eta|*\bar{u}^\eta$ is bounded (here we need $\lambda<d-2$). Hence,
\begin{align*}
  I_{122}(t) &\le C\||\mathrm{D}^2 V^\eta|*\bar{u}^\eta
  \|_{L^\infty(0,T;L^\infty(\R^d))}^k
  \int_0^t\E(S_\alpha^k(s))\dd s
  \le C(k)\int_0^t\E(S_\alpha^k(s))\dd s.
\end{align*}
These estimates show that
\begin{equation}\label{3.I12}
  I_{12}(t) \le C(k)\int_0^t\E(S_\alpha^k(s))\dd s
	+ C(k,m,T)N^{\beta(\lambda+2)(k+2m)-m+1}.
\end{equation}

Summarizing, we insert estimates \eqref{3.I13}, \eqref{3.I11}, and \eqref{3.I12} into \eqref{3.I1i} to find that
\begin{align*}
  \mathbb{E}\Big(N^{\alpha k} \int_{0}^{t \wedge \tau_\alpha} 
  \max_{i=1,\ldots,N}I_{1,i}(s)^k \dd s\Big) 
  &\le C(k)\big(1+N^{\beta(\lambda+3)k-\alpha k}\big)
  \int_0^t\E(S_\alpha^k(s))\dd s \\
  &\phantom{xx}+ C(k,m,T)N^{\beta(\lambda+2)(k+2m)-m+1}.
\end{align*}
Combining this estimate with \eqref{3.I2i}, we conclude from \eqref{3.diffX} that
\begin{align*}
  \E(S_\alpha^k(t)) &= \E\Big(N^{\alpha k}\max_{i=1,\ldots,N}
  \big|X_i^\eta(t\wedge\tau_\alpha)-\bar{X}_i^\eta(t\wedge\tau_\alpha)
  \big|^k\Big) \\
  &\le C(k)\big(1+N^{\beta(\lambda +3)k-\alpha k}\big)
  \int_0^t\E(S_\alpha^k(s))\dd s \\
  &\phantom{xx}+ C(k,m,T)\big(N^{(\alpha-\theta)k}  
	+ N^{\alpha k + \beta(\lambda +1)(k+2m) + m(2\theta-1)+1}
	+ N^{\beta (\lambda +2)(k+2m)-m+1}\big).
\end{align*}

\subsection{End of the proof}

Since $\alpha\ge\beta(\lambda+3)$ by assumption, the factor $N^{\beta(\lambda+3)k-\alpha k}$ is bounded for all $N$. We claim that for any given $\gamma>0$ and $(\alpha,\beta)$ chosen according to the theorem, we can choose $k$, $\theta$, and $m$ such that
the remaining terms are bounded by $N^{-\gamma}$. Indeed, let  
$\theta\in(\alpha,1/2-\beta(\lambda +1))$ (this requires that $\alpha<1/2-\beta(\lambda +1)$). The parameters $k$ and $m$ are chosen as follows:
\begin{itemize}
\item We take $k\in\N$ sufficiently large such that $(\alpha-\theta)k\le -\gamma$. This is possible since $\alpha-\theta<0$.
\item We choose $m\in\N$ sufficiently large such that $\beta (\lambda+2)(k+2m)-m+1 \le -\gamma$, which is equivalent to
$$
  2m(\beta(\lambda +2)-1/2) \leq -\gamma  -1 -\beta(\lambda +2)k.
$$
This choice is possible since $\beta<1/(2\lambda+4)$.
\item Next, we take $m\in \N$ sufficiently large (or even larger) such that $\alpha k+\beta(\lambda +1)(k+2m)+m(2\theta-1)+1\le -\gamma$,
which is equivalent to
$$
  m(2\beta(\lambda +1) +2\theta -1) \leq -\gamma -\alpha k 
  - \beta(\lambda+1)k -1.
$$ 
Choosing $m\in \N$ large enough is possible since $\theta<1/2-\beta(\lambda +1)$.
\end{itemize}
We infer that
$$
  \E(S_\alpha^k(t)) \le C(k)\int_0^t\E(S_\alpha^k(s))\dd s 
  + C(k,m,T)N^{-\gamma}.
$$
Gronwall's lemma implies that $\E(S_\alpha^k(t))\le
C(k,m,T)N^{-\gamma}$. Since $\gamma>0$ was arbitrary, this concludes the proof.


\section{Mean-square mean-field estimate}\label{sec.quant}

In this section, we prove Theorem \ref{thm.L2}. We first reformulate $\|(f^\eta-g^\eta)(t)\|_{L^2(\R^d)}$, estimate the resulting terms, and then derive a bound for $\|(f^\eta-g^\eta)(0)\|_{L^2(\R^d)}$.

\subsection{First reformulation} 

We first reformulate the $L^2(\R^d)$ norm of $(f^\eta-g^\eta)(t)$
in terms of $V^\eta$, $\mu^\eta$, and $\bar{u}^\eta$. For this, we expand
\begin{align} \label{4.J123}
  &\|(f^\eta-g^\eta)(t)\|_{L^2(\R^d)}^2 
  - \|(f^\eta-g^\eta)(0)\|_{L^2(\R^d)}^2
  = J_1 + J_2 + J_3, \quad\mbox{where} \\
  & J_1 = \|f^\eta(t)\|_{L^2(\R^d)}^2 - \|f^\eta(0)\|_{L^2(\R^d)}^2, 
  \nonumber \\
  & J_2 = \|g^\eta(t)\|_{L^2(\R^d)}^2 - \|g^\eta(0)\|_{L^2(\R^d)}^2, 
  \nonumber \\
  & J_3 = -2\big(\langle f^\eta(t),g^\eta(t)\rangle 
  - \langle f^\eta(0),g^\eta(0)\rangle\big). \nonumber
\end{align}
	
{\em Step 1: Reformulation of $J_1$.}
By definition \eqref{1.fg} of $f^\eta$, $V^\eta=Z^\eta*Z^\eta$, the symmetry of $Z^\eta$ (which is a consequence of the symmetry of $\xi$ and $\Psi$), and the change of variable $y=x-X_i^\eta(t)$, we compute
\begin{align}\label{4.fL2}
  \|f^\eta(t)\|_{L^2(\R^d)}^2 &= \bigg\|\frac{1}{N}\sum_{i=1}^N 
  Z^\eta(\boldsymbol{\cdot} - X_i^\eta(t))\bigg\|_{L^2(\R^d)}^2
  = \frac{1}{N^2}\int_{\R^d}\bigg(\sum_{i=1}^N Z^\eta(x-X_i^\eta(t))
  \bigg)^2 \dd x \nonumber \\
  &= \frac{1}{N^2}\sum_{i,j=1}^N\int_{\R^d} Z^\eta\big(y+X_j^\eta(t)
  - X_i^\eta(t)\big) Z^\eta(y)\dd y \\
  &= \frac{1}{N^2}\sum_{i,j=1}^N (Z^\eta*Z^\eta)(X_i^\eta(t)-X_j^\eta(t))
  = \frac{1}{N^2}\sum_{i,j=1}^N V^\eta(X_i^\eta(t)-X_j^\eta(t)). 
  \nonumber
\end{align}
To reformulate the last expression, we apply It\^{o}'s formula.
For this, we rewrite the particle system \eqref{1.X}. In the following,
we omit the argument $t$ whenever this simplifies the notation. Recall that, by definition of the empirical measure, system \eqref{1.X} can be written as 
$$
  \dd X_i^\eta = \kappa(\nabla V^\eta *\mu^\eta )(X_i^\eta)\dd t
  + \sqrt{2\sigma}\dd W_i, \quad i=1,\ldots,N,
$$
and consequently, for the vector $\mathbf{X}_{ij}=(X_i^\eta,X_j^\eta)^T \in \R^{2d}$ for some $i,j\in\{1,\ldots,N\}$,
$$
  \dd\mathbf{X}_{ij} = \kappa\begin{pmatrix}
  (\nabla V^\eta *\mu^\eta)(X_i^\eta) \\
  (\nabla V^\eta *\mu^\eta)(X_j^\eta)
  \end{pmatrix}\dd t
  + \sqrt{2\sigma}\begin{pmatrix} \dd W_i \\ \dd W_j \end{pmatrix}.
$$
We introduce $g(\mathbf{X}_{ij})=V^\eta(X_i^\eta-X_j^\eta)$. The derivatives are
\begin{align*}
  \mathrm{D}g(\mathbf{X}_{ij}) &= \begin{pmatrix}
  \na V^\eta(X_i^\eta-X_j^\eta) \\ 
  -\na V^\eta(X_i^\eta-X_j^\eta) \end{pmatrix} \in \R^{2d}, \\
  \mathrm{D}^2 g(\mathbf{X}_{ij}) &= \begin{pmatrix}
  \mathrm{D}^2 V^\eta(X_i^\eta-X_j^\eta) 
  & -\mathrm{D}^2 V^\eta(X_i^\eta-X_j^\eta) \\
  -\mathrm{D}^2 V^\eta(X_i^\eta-X_j^\eta) 
  & \mathrm{D}^2 V^\eta(X_i^\eta-X_j^\eta)
  \end{pmatrix} \in \R^{2d \times 2d}.
\end{align*}
Abbreviating $Y_{ij}=X_i^\eta-X_j^\eta$ , It\^{o}'s formula gives
\begin{align}\label{4.ito}
  \dd g(\mathbf{X}_{ij}) 
  &= \bigg(\kappa\mathrm{D}g(\mathbf{X}_{ij})\cdot\begin{pmatrix}
  (\na V^\eta*\mu^\eta)(X_i^\eta) \\ 
  (\na V^\eta*\mu^\eta)(X_j^\eta)
  \end{pmatrix} + \frac12\operatorname{tr}\big(2\sigma\mathrm{D}^2
  g(\mathbf{X}_{ij})\big)\bigg)\dd t \\
  &\phantom{xx}+ \sqrt{2\sigma}\mathrm{D}g(\mathbf{X}_{ij})
  \cdot\begin{pmatrix} \dd W_i \\ \dd W_j \end{pmatrix} \nonumber \\
  &= \kappa\na V^\eta(Y_{ij})\cdot\big((\na V^\eta*\mu^\eta)(X_i^\eta)
  - (\na V^\eta*\mu^\eta)(X_j^\eta)\big)\dd t \nonumber \\
  &\phantom{xx}+ 2\sigma\Delta V^\eta(Y_{ij})\dd t
  + \sqrt{2\sigma}\na V^\eta(Y_{ij})
  \cdot(\dd W_i-\dd W_j). \nonumber
\end{align} 
After summation over $i,j=1,\ldots,N$ with $i\neq j$ and using the symmetry of $V^\eta$, i.e.\ $\na V^\eta(Y_{ij})=-\na V^\eta(Y_{ji})$ in the first term on the right-hand side, the integral formulation of \eqref{4.ito} becomes
\begin{align*}
  \sum_{i\neq j}&\big(g(\mathbf{X}_{ij}(t))-g(\mathbf{X}_{ij}(0))\big)
  = 2\kappa\sum_{i\neq j}\int_0^t\na V^\eta(Y_{ij}(s))\cdot
  (\na V^\eta*\mu^\eta)(X_i^\eta(s))\dd s \\
  &\phantom{xx}+ 2\sigma\sum_{i\neq j}\int_0^t
  \Delta V^\eta(Y_{ij}(s))\dd s
  + 2\sqrt{2\sigma}\sum_{i\neq j}
  \int_0^t\na V^\eta(Y_{ij}(s))\cdot\dd W_i(s),
\end{align*}
where we have used for the It\^o integral the definition of $Y_{ij}$ and
\begin{align*}
  \sum_{i,j=1,\,i\neq j}^N &\int_{0}^t\na V^\eta
  (X_i^\eta - X_j^\eta)\cdot\dd W_i - \sum_{i,j=1,\,i\neq j}^N \int_{0}^t\nabla V^\eta( X_i^\eta - X_j^\eta)\cdot\dd W_j \\
  &= 2\sum_{i,j=1,\,i\neq j}^N \int_{0}^t\na V^\eta
  (X_i^\eta - X_j^\eta)\cdot\dd W_i,
\end{align*}
which follows from the anti-symmetry of $\na V^\eta$ and the fact that $W_i$ are independent Brownian motions for $i=1,\ldots,N$.

We deduce from the definitions of $g$ and $J_1$, the fact that the difference $V^\eta(Y_{ij}(t))-V^\eta(Y_{ij}(0))$ vanishes for $i=j$ (since $Y_{ii}(t)=0$ for all $t\geq 0$), and from formulation \eqref{4.fL2} that
\begin{align*}
  J_1 &= \frac{1}{N^2}\sum_{i\neq j}
  \big(V^\eta(Y_{ij}(t))-V^\eta(Y_{ij}(0))\big)
  = \frac{1}{N^2}\sum_{i\neq j}
  \big(g(\mathbf{X}_{ij}(t))-g(\mathbf{X}_{ij}(0))\big) \\
  &= \frac{2\kappa}{N^2}\sum_{i\neq j}\int_0^t\na V^\eta(Y_{ij}(s))
  \cdot(\na V^\eta*\mu^\eta)(X_i^{\eta}(s))\dd s \\
  &\phantom{xx}+ \frac{2\sigma}{N^2}\sum_{i\neq j}\int_0^t
  \Delta V^\eta(Y_{ij}(s))\dd s
  + \frac{2\sqrt{2\sigma}}{N^2}
  \sum_{i\neq j}\int_0^t\na V^\eta(Y_{ij}(s))\cdot\dd W_i(s).
\end{align*}
It follows from $N^{-1}\sum_{j=1}^N\na V^\eta(Y_{ij})
=(\na V^\eta*\mu^\eta)(X_i^\eta)$ that
\begin{align}\label{4.J1}
  J_1 &= \frac{2\kappa}{N}\sum_{i=1}^N\int_0^t
  |(\na V^\eta*\mu^\eta)(X_i^\eta(s))|^2\dd s \\
  &\phantom{xx}+ \frac{2\sigma}{N}\sum_{i=1}^N\int_0^t
  (\Delta V^\eta*\mu^{\eta})(X_i^\eta(s))\dd s 
  - \frac{2\sigma}{N}\int_0^t\Delta V^\eta(0)\dd s \nonumber \\
  &\phantom{xx}+ \frac{2\sqrt{2\sigma}}{N}\sum_{i=1}^N\int_0^t
  (\na V^\eta*\mu^\eta)(X_i^\eta(s))\dd W_i(s). \nonumber
\end{align}
Note that for all terms on the right-hand side, we have written the sum over $i\neq j$ as the sums over $i$ and $j$ minus the sum of the diagonal $i=j$. In the sum over $i=j$, we first need to evaluate $\na V^\eta(Y_{ii}(s))=\na V^\eta(0)$, which vanishes due to anti-symmetry of $\na V^\eta$. However, the expression
$\Delta V^\eta(Y_{ii}(s))=\Delta V^\eta(0)$ does generally not vanish, explaining the third term on the right-hand side.

{\em Step 2: Reformulation of $J_2$.}
We deduce from the symmetry of $Z^\eta$ that
\begin{equation}\label{4.symm}
  \langle Z^\eta*u,v\rangle = \langle u,Z^\eta*v\rangle
  \quad\mbox{for all }u,v\in L^2(\R^d),
\end{equation}
where the dual product $\langle\cdot,\cdot\rangle$ corresponds here to the inner product in $L^2(\R^d)$. Therefore,
$$
  \|g^\eta(t)\|_{L^2(\R^d)}^2 
  = \langle Z^\eta*\bar{u}^\eta(t),Z^\eta*\bar{u}^\eta(t)\rangle
  = \langle \bar{u}^\eta(t),Z^\eta*Z^\eta*\bar{u}^\eta(t)\rangle
  = \langle \bar{u}^\eta(t),V^\eta*\bar{u}^\eta(t)\rangle.
$$
Thus, taking $V^\eta*\bar{u}^\eta$ as a test function
in the weak formulation of equation \eqref{1.baru} for $\bar{u}^\eta$,
\begin{align*}
  \|g^\eta(t)\|_{L^2(\R^d)}^2 
  &= \langle\bar{u}^\eta(0),V^\eta*\bar{u}^\eta(0)\rangle
  + \int_0^t\langle\bar{u}^\eta,V^\eta*\pa_t\bar{u}^\eta\rangle\dd s \\
  &\phantom{xx}+ \sigma\int_0^t\langle\Delta\bar{u}^\eta,
  V^\eta*\bar{u}^\eta\rangle\dd s
  - \kappa\int_0^t\langle V^\eta*\bar{u}^\eta,
  \diver(\bar{u}^\eta\na V^\eta*\bar{u}^\eta)\rangle\dd s,
\end{align*}
and, after integrating by parts in the third term on the right-hand side,
\begin{align}\label{4.J2}
  J_2 &= \|g^\eta(t)\|_{L^2(\R^d)}^2 - \|g^\eta(0)\|_{L^2(\R^d)}^2
  = \int_0^t\langle\bar{u}^\eta,V^\eta*\pa_t\bar{u}^\eta\rangle\dd s \\
  &\phantom{xx}
  + \sigma\int_0^t\langle\bar{u}^\eta,
  \Delta V^\eta*\bar{u}^\eta\rangle\dd s 
  - \kappa\int_0^t\langle V^\eta*\bar{u}^\eta,
  \diver(\bar{u}^\eta\na V^\eta*\bar{u}^\eta)\rangle\dd s. \nonumber 
\end{align}

{\em Step 3: Reformulation of $J_3$.}
We determine $J_3$ by first calculating the mixed term
\begin{align*}
  \langle f^\eta(t),g^\eta(t)\rangle
  &= \langle\mu^\eta(t)*Z^\eta,\bar{u}^\eta(t)*Z^\eta\rangle
  = \langle\mu^\eta(t),Z^\eta*Z^\eta*\bar{u}^\eta(t)\rangle \\
  &= \langle\mu^\eta(t),V^\eta*\bar{u}^\eta(t)\rangle
  = \frac{1}{N}\sum_{i=1}^N V^\eta*\bar{u}^\eta(t,X_i^\eta(t)),
\end{align*}
where we have again used the symmetry of $Z^\eta$ (see \eqref{4.symm}). By It\^{o}'s lemma, applied to every term $V^\eta*\bar{u}^\eta(t,X_i^\eta(t))$, as in \eqref{4.ito},
\begin{align*}
  J_3 &= -\frac{2}{N}\sum_{i=1}^N\big(V^\eta*\bar{u}^\eta(t,X_i^\eta(t))
  - V^\eta*\bar{u}^\eta(0,X_i^\eta(0))\big) \\
  &= -\frac{2}{N}\sum_{i=1}^N\int_0^t\big[\pa_t(V^\eta*\bar{u}^\eta)
  + \kappa(\na V^\eta*\bar{u}^\eta)(\na V^\eta*\mu^\eta) 
  + \sigma\Delta V^\eta*\bar{u}^\eta\big](s,X_i^\eta(s))\dd s \\
  &\phantom{xx}- \frac{2\sqrt{2\sigma}}{N}\sum_{i=1}^N
  \int_0^t\na V^\eta*\bar{u}^\eta(s,X_i^\eta(s))\dd W_i(s).
\end{align*}
We use $\pa_t (V^\eta*\bar{u}^\eta) = V^\eta*\pa_t \bar{u}^\eta$ and insert equation \eqref{1.baru} for $\bar{u}^\eta$ (observe that $\pa_t \bar{u}^\eta \in L^2(\R^d)$ for fixed $\eta>0$):
\begin{align*}
  2\int_0^t&\pa_t(V^\eta*\bar{u}^\eta)(s,X_i^\eta(s))\dd s
  = \int_0^t V^\eta*\pa_t\bar{u}^\eta(s,X_i^\eta(s))\dd s
  + \int_0^t V^\eta*\pa_t\bar{u}^\eta(s,X_i^\eta(s))\dd s \\
  &= \int_0^t V^\eta*\pa_t\bar{u}^\eta(s,X_i^\eta(s))\dd s \\
  &\phantom{xx}+ \int_0^t\big[\sigma \Delta V^\eta*\bar{u}^\eta
  (s,X_i^\eta(s))
  - \kappa\na V^\eta*\big(\bar{u}^\eta\na V^\eta*\bar{u}^\eta\big)
  (s,X_i^\eta(s)\big)\big]\dd s,
\end{align*}
which allows us to write $J_3$ as
\begin{align}\label{4.J3}
  J_3 &= -\frac{1}{N}\sum_{i=1}^N\int_0^t V^\eta*\pa_t
  \bar{u}^\eta(s,X_i^\eta(s))\dd s \\
  &\phantom{xx}- \frac{1}{N}\sum_{i=1}^N\int_0^t
  \big[\sigma \Delta V^\eta*\bar{u}^\eta
  - \kappa\na V^\eta*\big(\bar{u}^\eta\na V^\eta*\bar{u}^\eta
  \big)\big](s,X_i^\eta(s))\dd s \nonumber \\
  &\phantom{xx}- \frac{2}{N}\sum_{i=1}^N\int_0^t
  \big[\kappa(\na V^\eta*\bar{u}^\eta)\cdot(\na V^\eta*\mu^\eta)
  + \sigma\Delta V^\eta*\bar{u}^\eta\big](s,X_i^\eta(s))\dd s \nonumber \\
  &\phantom{xx}- \frac{2\sqrt{2\sigma}}{N}\sum_{i=1}^N
  \int_0^t\na V^\eta*\bar{u}^\eta(s,X_i^\eta(s))\cdot\dd W_i(s). 
  \nonumber 
\end{align}

We combine expressions \eqref{4.J1}, \eqref{4.J2}, \eqref{4.J3} for $J_1$, $J_2$, $J_3$, respectively, to find from \eqref{4.J123} that
\begin{align}\label{4.K1to6}
  & \|(f^\eta-g^\eta)(t)\|_{L^2(\R^d)}^2 
  - \|(f^\eta-g^\eta)(0)\|_{L^2(\R^d)}^2
  = (K_1+\cdots+K_6)(t), \quad\mbox{where} \\
  & K_1(t) = -\frac{2\sigma}{N}\int_0^t\Delta V^\eta(0)\dd s
  = -\frac{2\sigma t}{N}\Delta V^\eta(0), \nonumber \\
  & K_2(t) = \frac{\sigma}{N}\sum_{i=1}^N\int_0^t\big(2(
  \Delta V^\eta*\mu^\eta)(X_i^\eta(s)) 
  + \langle\bar{u}^\eta(s),
  (\Delta V^\eta*\bar{u}^\eta)(s)\rangle \nonumber \\
  &\phantom{K_1(t)xx}- 3\Delta V^\eta*\bar{u}^\eta(s,X_i^\eta(s))\big)\dd s, \nonumber \\
  & K_3(t) = \int_0^t\langle\bar{u}^\eta(s),
  V^\eta*\pa_t\bar{u}^\eta(s)\rangle\dd s
  - \frac{1}{N}\sum_{i=1}^N\int_0^t 
  V^\eta*\pa_t\bar{u}^\eta(s,X_i^\eta(s))\dd s, \nonumber \\
  & K_4(t) = -\kappa\int_0^t\langle V^\eta*\bar{u}^\eta,
  \diver(\bar{u}^\eta\na V^\eta*\bar{u}^\eta)\rangle\dd s \nonumber \\
  &\phantom{K_1(t)xx}+ \frac{\kappa}{N}\sum_{i=1}^N\int_0^t
  \na V^\eta*(\bar{u}^\eta\na V^\eta*\bar{u}^\eta)
  (s,X_i^\eta(s))\dd s, \nonumber \\
  & K_5(t) = \frac{2\kappa}{N}\sum_{i=1}^N\int_0^t
  \big(|\na V^\eta*\mu^\eta|^2 - (\na V^\eta*\bar{u}^\eta)
  \cdot(\na V^\eta*\mu^\eta)\big)(X_i^\eta(s))\dd s, \nonumber \\
  & K_6(t) = \frac{\sqrt{8\sigma}}{N}\sum_{i=1}^N\int_0^t
  \na V^\eta*(\mu^\eta-\bar{u}^\eta)(s,X_i^\eta(s))\cdot\dd W_i(s).
  \nonumber 
\end{align}
In the next subsection, we rewrite $K_2,\ldots,K_5$ and directly estimate $K_1$ and $K_6$ at the end.

\subsection{Second reformulation}\label{sec.second}

We reformulate the terms $K_2,\ldots,K_5$ in \eqref{4.K1to6} in such a way that some terms can be combined or cancel out. 

{\em Step 1: Reformulation of $K_2(t)$.} Using the definition of $\mu^\eta$, we write
\begin{align*}
  K_2(t) &= 2\sigma\int_0^t\big\langle\mu^\eta(s),
  \Delta V^\eta*(\mu^\eta-\bar{u}^\eta)(s)\big\rangle\dd s \\
  &\phantom{xx}+ \sigma\int_0^t\big\langle(
  \bar{u}^\eta-\mu^\eta)(s),\Delta V^\eta*\bar{u}^\eta(s)
  \big\rangle\dd s.
\end{align*}
Because of $V^\eta=Z^\eta*Z^\eta$ and the symmetry of $Z^\eta$, an integration by parts shows that
$$
  K_2(t) = -2\sigma\int_0^t\langle\na(f^\eta-g^\eta),
  \na f^\eta\rangle\dd s + \sigma\int_0^t\langle
  \na(f^\eta-g^\eta),\na g^\eta\rangle\dd s.
$$

{\em Step 2: Reformulation of $K_3(t)$.}
With the definition $V^\eta=Z^\eta*Z^\eta$,	property \eqref{4.symm}, and equation \eqref{1.baru} for $\bar{u}^\eta$, we infer that
\begin{align*}
  K_3(t) &= \int_0^t\langle\bar{u}^\eta,
  Z^\eta*Z^\eta*\pa_t\bar{u}^\eta\rangle\dd s
  - \int_0^t\langle\mu^\eta,Z^\eta*Z^\eta*\pa_t\bar{u}^\eta\rangle\dd s \\
  &= \int_0^t\langle(\bar{u}^\eta-\mu^\eta)*Z^\eta,
  Z^\eta*\pa_t\bar{u}^\eta\rangle\dd s
  = \int_0^t\langle g^\eta-f^\eta,Z^\eta*\pa_t\bar{u}^\eta\rangle\dd s \\
  &= \sigma\int_0^t\langle g^\eta-f^\eta,\Delta Z^\eta*\bar{u}^\eta
  \rangle\dd s - \kappa\int_0^t\big\langle  
  g^\eta-f^\eta,Z^\eta*\diver(\bar{u}^\eta\na V^\eta*\bar{u}^\eta)
  \big\rangle\dd s \\
  &= \sigma\int_0^t\langle\na(f^\eta-g^\eta),\na g^\eta\rangle\dd s
  - \kappa\int_0^t\big\langle\na Z^\eta*(f^\eta-g^\eta),
  \bar{u}^\eta\na V^\eta*\bar{u}^\eta\big\rangle\dd s,
\end{align*}
where we integrated by parts in the last step. The first term on the right-hand side is the same as the last term in $K_2(t)$, which shows that
\begin{align}\label{4.K2K3}
  K_2(t) + K_3(t) &= -2\sigma \int_{0}^t \big\langle\na(f^\eta-g^\eta),
  \na(f^\eta-g^\eta)\big\rangle\dd s \\
  &\phantom{xx}-\kappa\int_0^t\big\langle
  \na Z^\eta*(f^\eta-g^\eta),\bar{u}^\eta\na V^\eta*\bar{u}^\eta
  \big\rangle\dd s. \nonumber
\end{align}

{\em Step 3: Reformulation of $K_4(t)$.}
Using the symmetry of $Z^\eta$ and property \eqref{4.symm} again, the first term of $K_4(t)$ becomes
\begin{align*}
  -\kappa \langle V^\eta&*\bar{u}^\eta,\diver(\bar{u}^\eta
  \na V^\eta*\bar{u}^\eta)\rangle
  = -\kappa\langle g^\eta,\na Z^\eta*(\bar{u}^\eta
  \na V^\eta*\bar{u}^\eta)\rangle.
\end{align*}
Similarly, the second term in $K_4(t)$ becomes
\begin{align*}
  \frac{\kappa}{N}&\sum_{i=1}^N\na V^\eta*(\bar{u}^\eta
  \na V^\eta*\bar{u}^\eta)(X_i^\eta)
 = \kappa\big\langle\mu^\eta*Z^\eta,\na Z^\eta*(\bar{u}^\eta
  \na V^\eta*\bar{u}^\eta)\rangle \\
  &= \kappa\langle f^\eta,\na Z^\eta
  *(\bar{u}^\eta\na V^\eta*\bar{u}^\eta)\rangle.
\end{align*}
It follows from the anti-symmetry of $\na Z^\eta$ that
\begin{align*}
  K_4(t) &= \kappa\int_0^t\langle f^\eta-g^\eta,
  \na Z^\eta*(\bar{u}^\eta\na V^\eta*\bar{u}^\eta)\rangle\dd s \\
  &= -\kappa\int_0^t\langle \na Z^\eta*(f^\eta-g^\eta),
  \bar{u}^\eta\na V^\eta*\bar{u}^\eta\rangle\dd s,
\end{align*}
which equals the second term of $K_3(t)$. Hence, together with \eqref{4.K2K3},
\begin{align*}
  K_2(t) + K_3(t) + K_4(t)
  &= -2\sigma \int_{0}^t \big\langle\na(f^\eta-g^\eta),
  \na(f^\eta-g^\eta)\big\rangle\dd s \\
  &\phantom{xx}- 2\kappa\int_0^t\big\langle
  \na Z^\eta*(f^\eta-g^\eta),
  \bar{u}^\eta\na V^\eta*\bar{u}^\eta\big\rangle\dd s. \nonumber
\end{align*}

{\em Step 4: Reformulation of $K_5(t)$.}
We use $\na V^\eta=Z^\eta*\na Z^\eta$ to rewrite $K_5(t)$ according to
\begin{align*}
  K_5(t) &= \frac{2\kappa}{N}\sum_{i=1}^N\int_0^t(\na V^\eta*\mu^\eta)
  (X_i^\eta)\cdot\big(Z^\eta*\na Z^\eta*(\mu^\eta(X_i^\eta)
 - \bar{u}^\eta(X_i^\eta))\big)\dd s \\
  &= 2\kappa\int_0^t\big\langle\mu^\eta,(\na V^\eta*\mu^\eta)
  \cdot(Z^\eta*\na(f^\eta-g^\eta))\big\rangle\dd s.
\end{align*}

{\em Step 5: Reformulation of $K_2(t)+\cdots+K_5(t)$.}
We add the expressions for $K_2,\ldots,K_5$ derived in the previous steps:
\begin{align}\label{4.aux}
  (K_2+\cdots+K_5)(t) &= -2\sigma\int_0^t\big\langle\na(f^\eta-g^\eta),
  \na(f^\eta-g^\eta)\big\rangle\dd s \\
  &\phantom{xx}- 2\kappa\int_0^t\big\langle
  \na Z^\eta*(f^\eta-g^\eta),\bar{u}^\eta
  \na V^\eta*\bar{u}^\eta\big\rangle\dd s \nonumber \\
  &\phantom{xx}+ 2\kappa\int_0^t\big\langle\mu^\eta,
  (\na V^\eta*\mu^\eta)\cdot(\na Z^\eta*(f^\eta-g^\eta))
  \big\rangle\dd s.	\nonumber
\end{align}
We rewrite the second term on the right-hand side 
by adding and subtracting some terms in the second argument of the dual bracket,
\begin{equation*}
  \bar{u}^\eta\na V^\eta*\bar{u}^\eta
  = (\bar{u}^\eta-\mu^\eta)\na V^\eta*\bar{u}^\eta
  + \mu^\eta\na V^\eta*(\bar{u}^\eta-\mu^\eta) 
  + \mu^\eta\na V^\eta*\mu^\eta.
\end{equation*}
Inserting this expression, we see that the last term cancels with the last integral on the right-hand side of \eqref{4.aux}. Furthermore, taking into account that $\na V^\eta*(\bar{u}^\eta-\mu^\eta)=\na Z^\eta*Z^\eta*(\bar{u}^\eta-\mu^\eta)
=-\na Z^\eta*(f^\eta-g^\eta)$, we have
\begin{align*}
  -2\kappa\int_0^t&\big\langle\na Z^\eta*(f^\eta-g^\eta),
  (\bar{u}^\eta-\mu^\eta)\na V^\eta*\bar{u}^\eta
  + \mu^\eta\na V^\eta*(\bar{u}^\eta-\mu^\eta)\big\rangle\dd s \\
  &=- 2\kappa\int_0^t\big\langle\bar{u}^\eta-\mu^\eta, 
  (\na V^\eta*\bar{u}^\eta)
  \cdot(\na Z^\eta*(f^\eta-g^\eta))\big\rangle\dd s \\
  &\phantom{xx}+ 2\kappa\int_0^t\big\langle\mu^\eta,
  |\na Z^\eta*(f^\eta-g^\eta)|^2\big\rangle\dd s,
\end{align*}
and we end up with
\begin{align}\label{4.K2345}
  (K_2+\cdots+K_5)(t) 
  &= -2\sigma\int_0^t\|\na(f^\eta-g^\eta)\|_{L^2(\R^d)}^2\dd s \\
  &\phantom{xx}- 2\kappa\int_0^t\big\langle\bar{u}^\eta-\mu^\eta, 
  (\na V^\eta*\bar{u}^\eta)\cdot(\na Z^\eta*(f^\eta-g^\eta))
  \big\rangle\dd s \nonumber \\
  &\phantom{xx}+ 2\kappa\int_0^t\big\langle\mu^\eta,
  |\na Z^\eta*(f^\eta-g^\eta)|^2\big\rangle\dd s. \nonumber
\end{align}
In the repulsive case $\kappa=-1$, the last term is nonpositive and can be used to absorb other terms; see \cite{Oel87}. However, in the attractive case $\kappa=1$, we need to estimate this expression, which complicates the proof considerably.

{\em Step 6: End of the reformulation.}
We insert expression \eqref{4.K2345} for $K_2+\cdots+K_5$ into \eqref{4.K1to6}, take the supremum over $0<t<T$, and the expectation:
\begin{align}\label{4.KLM}
  \E\Big(&\sup_{0<t<T}\|(f^\eta-g^\eta)(t)\|_{L^2(\R^d)}^2\Big) 
  + 2\sigma\E\int_0^T\|\na(f^\eta-g^\eta)(s)\|_{L^2(\R^d)}^2\dd s \\
  &\le \E\|(f^\eta-g^\eta)(0)\|_{L^2(\R^d)}^2
  + \E\Big(\sup_{0<t<T}|K_1(t)+K_6(t)|\Big) + L(T) + \kappa M(T), \nonumber
\end{align}
where (since $|\kappa|=1$)
\begin{align}
  L(T) &= 2\E\bigg(\sup_{0<t<T}\bigg|\int_0^t\big\langle
  \mu^\eta-\bar{u}^\eta, (\na V^\eta*\bar{u}^\eta)
  \cdot(\na Z^\eta*(f^\eta-g^\eta))\big\rangle\dd s\bigg|\bigg), \label{4.L} \\
  M(T) &= 2\E\bigg(\sup_{0<t<T}\int_0^t\big\langle
  \mu^\eta,|\na Z^\eta*(f^\eta-g^\eta)|^2\big\rangle
  \dd s\bigg). \label{4.M}
\end{align}

The term $K_1(t)$ can be estimated directly by using Lemma \ref{lem.DkV}:
\begin{equation}\label{4.K1}
  K_1(t) = -\frac{2\sigma t}{N}\Delta V^\eta(0)
  \le C(T)N^{-1}\|\mathrm{D}^2 V^\eta\|_{L^\infty(\R^d)}
  \le C(T)N^{\beta(\lambda +2)-1} \le C(T)N^{-1/2-\eps}
\end{equation}
for some $\eps>0$ if $\beta<1/(2\lambda+2)$.
To estimate $K_6(t)$, defined in \eqref{4.K1to6}, we use the Burkholder--Davis--Gundy inequality and Jensen's inequality:
$$
  \E\Big(\sup_{0<t<T}|K_6(t)|\Big) 
  \le C\E(\langle K_6\rangle_T^{1/2})
  \le C(\E\langle K_6\rangle_T)^{1/2},
$$
where $\langle K_6\rangle_T$ is the quadratic variation process of $K_6$ at time $T>0$. Since for different particles, the Brownian motions $W_i$ are independent, the quadratic variation becomes
\begin{align*}
  (\E\langle K_6\rangle_T)^{1/2} 
  &= \bigg(\frac{8\sigma}{N^2}\sum_{i=1}^N\E\int_0^T\big|
  \na V^\eta*\big(\mu^\eta(s,X_i^\eta(s))
  - \bar{u}^\eta(s,X_i^\eta(s))\big)\big|^2\dd s\bigg)^{1/2} \\
  &= \bigg(\frac{8\sigma}{N}\E\int_0^T\big\langle\mu^\eta(s),
  |\na V^\eta*(\mu^\eta-\bar{u}^\eta)(s)|^2\big\rangle
  \dd s\bigg)^{1/2} \\
  &= \bigg(\frac{8\sigma}{N}\E\int_0^T\big\langle\mu^\eta(s),
  |\na Z^\eta*(f^\eta - g^\eta)(s)|^2\big\rangle\dd s\bigg)^{1/2}.
\end{align*}
We infer from Young's inequality and definition \eqref{4.M} of $M(t)$ that
\begin{equation}\label{4.K6}
  \E\Big(\sup_{0<t<T}|K_6(t)|\Big) 
  \le\frac{C(\sigma)}{N} + 2\E\int_0^T\big\langle\mu^\eta,
  |\na Z^\eta*(f^\eta - g^\eta)|^2\big\rangle\dd s
  \le \frac{C(\sigma)}{N} + M(T),
\end{equation} 
where we used the fact that $M(T)$ is non-negative. Note that if $\kappa=-1$, we could use the nonpositive term $\kappa M(T)$ in \eqref{4.KLM} to absorb the last term in \eqref{4.K6}. 

It remains to estimate $L(T)$ and $M(T)$. We start with the estimate of $M(T)$ before turning to the slightly more involved (but similar) calculation of $L(T)$.


\subsection{Estimation of $M(T)$}

By inserting definition \eqref{1.mu} of the empirical measure, we can formulate \eqref{4.M} as 
\begin{align*}
  M(T) &= 2\E\bigg(\int_0^T \frac{1}{N} \sum_{i = 1}^N 
  \big|\nabla V^\eta*(\mu^\eta-\bar{u}^\eta) \big|^2 
  (X_i^\eta)\dd s\bigg) \\
  &= 2\E\bigg(\int_0^T \frac{1}{N} \sum_{i = 1}^N 
  \bigg|\frac{1}{N} \sum_{j = 1}^N \nabla V^\eta(X_i^\eta - X_j^\eta) 
  - \nabla V^\eta*\bar{u}^\eta(X_i^\eta)\bigg|^2\dd s\bigg).
\end{align*}
By adding and subtracting $\nabla V^\eta(\bar{X}_i^\eta - \bar{X}_j^\eta)$ as well as $\nabla V^\eta*\bar{u}^\eta (\bar{X}_j^\eta)$, we write $M(T) \leq M_1 + M_2 + M_3$, where
\begin{align*}
  M_1 &:= 2\mathbb{E}\bigg(\int_{0}^T \frac{1}{N}\sum_{i = 1}^N 
  \bigg|\frac{1}{N}\sum_{j = 1}^N 
  \big(\nabla V^\eta(X_i^\eta-X_j^\eta)-\nabla V^\eta(\bar{X}_i^\eta  
  -\bar{X}_j^\eta)\big) \bigg|^2\dd s\bigg) \\
  M_2 &:= 2\mathbb{E}\bigg(\int_{0}^T \frac{1}{N}\sum_{i = 1}^N 
  \bigg|\frac{1}{N}\sum_{j = 1}^N 
  \big(\nabla V^\eta(\bar{X}_i^\eta-\bar{X}_j^\eta)
  -\nabla  V^\eta\ast \bar{u}^\eta(\bar{X}_i^\eta)\big) 
  \bigg|^2\dd s\bigg) \\
  M_3 &:= 2\mathbb{E}\bigg(\int_{0}^T \frac{1}{N}\sum_{i = 1}^N 
  \big|\na V^\eta*\bar{u}^\eta(\bar{X}_i^\eta) 
  - \nabla V^\eta\ast \bar{u}^\eta(X_i^\eta) \big|^2\dd s\bigg).
\end{align*}

{\em Step 2a: Estimation of $M_{1}$.} To apply the mean-field estimate in Theorem \ref{thm.prob}, we introduce the set
\begin{equation}\label{Calpha}
  \mathcal{C}_\alpha(t) := \bigg\{\omega\in\Omega:\max_{i=1,\ldots,N}
  |X_i^\eta(t,\omega)-\bar{X}_i^\eta(t,\omega)|>N^{-\alpha}\bigg\},
\end{equation}
where $\alpha>0$ will be chosen later. By Theorem \ref{thm.prob}, for
any $\gamma>0$ and $T>0$, there exists $C(\gamma,T)>0$ such that
\begin{equation}\label{PCalpha}
  \sup_{0<t<T}\Prob(\mathcal{C}_\alpha(t))\le C(\gamma,T)N^{-\gamma}.
\end{equation}
The idea is to split $\Omega$ into the set $\mathcal{C}_\alpha(s)$ and its complement $\mathcal{C}_\alpha(s)^c$ with $s\in(0,t)$ and to estimate $M_{1}$ on these two sets separately.
This yields $M_1=M_{11}+M_{12}$, where
\begin{align*}
  M_{11} &:= 2\mathbb{E}\bigg(\int_{0}^T \frac{1}{N}\sum_{i = 1}^N 
  \bigg|\frac{1}{N}\sum_{j = 1}^N \big(\nabla V^\eta(X_i^\eta-X_j^\eta)
  -\nabla V^\eta(\bar{X}_i^\eta -\bar{X}_j^\eta)\big) \bigg|^2
  \mathrm{1}_{\mathcal{C}_\alpha(s)}\dd s\bigg), \\
  M_{12} &:= 2\mathbb{E}\bigg(\int_{0}^T \frac{1}{N}\sum_{i = 1}^N 
  \bigg|\frac{1}{N}\sum_{j = 1}^N \big(\nabla V^\eta(X_i^\eta-X_j^\eta)
  -\nabla V^\eta(\bar{X}_i^\eta -\bar{X}_j^\eta)\big) \bigg|^2
  \mathrm{1}_{\mathcal{C}_\alpha(s)^c}\dd s\bigg).
\end{align*}
We deduce from definition \eqref{PCalpha} of $\mathcal{C}_\alpha(s)$, Fubini's theorem, and Lemma \ref{lem.DkV} that
\begin{align*}
  M_{11} \leq  C \|\nabla V^\eta\|_{L^\infty(\R^d)}^2\int_{0}^T
  \Prob(\mathcal{C}_\alpha(s)) \dd s 
  \leq C N^{2\beta(\lambda +1)-\gamma} \leq CN^{-1/2-\eps},
\end{align*}
where the last step follows after choosing $\gamma>0$ sufficiently large ($\eps>0$ can be arbitrary here). On the complement of $\mathcal{C}_\alpha(s)$, the mean-value theorem and again Lemma \ref{lem.DkV} lead to 
\begin{align*}
  M_{12} &\leq  2 \| \mathrm{D}^2 V^\eta\|_{L^\infty(\R^d)}^2\mathbb{E}\bigg(\int_{0}^T 
  \frac{1}{N}\sum_{i = 1}^N \frac{1}{N}\sum_{j = 1}^N 
  \big|X_i^\eta - X_j^\eta - \bar{X}_i^\eta + \bar{X}_j^\eta \big|^2\mathrm{1}_{\mathcal{C}_\alpha(s)^c}\dd s\bigg)\\
&\leq C N^{2\beta(\lambda +2)}N^{-2\alpha} \le CN^{-1/2-\eps}.
\end{align*}
The last step follows if we are able to choose $\alpha>0$ such that $2\beta(\lambda +2) -2\alpha < -1/2$, which is equivalent to $\alpha > 1/4 + \beta(\lambda +2)$. This choice is admissible in Theorem \ref{thm.prob}, since $1/4 + \beta(\lambda +2) < \alpha < 1/2 - \beta(\lambda +1)$ is nonempty if $\beta < 1/(8\lambda+12)$. The estimates for $M_{11}$ and $M_{12}$ show that there exists $\eps>0$ such that
\begin{align*}
  M_1 \leq CN^{-1/2-\eps}.
\end{align*}

\textit{Step 2b: Estimation of $M_{3}$.} Similar as in the previous step, we apply Theorem \ref{thm.prob} with the set $\mathcal{C}_\alpha(s)$, defined in \eqref{Calpha}, possibly choosing a different $\alpha>0$; see below. Again, by Theorem \ref{thm.prob}, estimate \eqref{PCalpha} holds. We split $\Omega$ into the set $\mathcal{C}_\alpha(s)$ and its complement $\mathcal{C}_\alpha(s)^c$ with $s\in(0,t)$ to find that $M_3=M_{31}+M_{32}$, where
\begin{align*}
  M_{31} &:= 2\mathbb{E}\bigg(\int_{0}^T \frac{1}{N}\sum_{i = 1}^N 
  \big|\nabla V^\eta*\bar{u}^\eta(\bar{X}_i^\eta) 
  - \nabla V^\eta*\bar{u}^\eta(X_i^\eta) \big|^2\mathrm{1}_{\mathcal{C}_\alpha(s)}\dd s\bigg), \\
  M_{32} &:= 2\mathbb{E}\bigg(\int_{0}^T \frac{1}{N}\sum_{i = 1}^N 
  \big|\nabla V^\eta*\bar{u}^\eta(\bar{X}_i^\eta) 
  - \nabla V^\eta*\bar{u}^\eta(X_i^\eta) \big|^2
  \mathrm{1}_{\mathcal{C}_\alpha(s)^c}\dd s\bigg).
\end{align*}
We infer from Lemma \ref{lem.DkVbaru} that the $L^\infty(\R^d)$ norm of 
$\nabla V^\eta*\bar{u}^\eta(\bar{X}_i^\eta)$ is bounded. Then, by definition of $\mathcal{C}_\alpha(s)$, similarly as in Step 2a,
\begin{align*}
  M_{31} \leq C(T) \sup_{0<s<T} \Prob(\mathcal{C}_\alpha(s)) 
  \le C(T)N^{-\gamma} \leq C(T)N^{-1/2-\eps},
\end{align*}
by taking $\gamma > 1/2$. The second term $M_{32}$ is also estimated like in Step 2a:
\begin{align*}
  M_{32} \leq C(T) \|\mathrm{D}^2 V^\eta*\bar{u}^\eta 
  \|_{L^\infty(\R^d)}^2 N^{-2\alpha} 
  \leq C(T)N^{-2\alpha} \leq C(T)N^{-1/2-\eps}
\end{align*}
for $\alpha > 1/4$, which is possible according to the choice of $\beta$ in Theorem \ref{thm.prob}. Consequently, for some $\eps>0$,
\begin{align*}
  M_3 \leq C(T)N^{-1/2-\eps}.
\end{align*}

\textit{Step 2c: Estimation of $M_{2}$.} The term $M_{2}$ is treated by the law-of-large-numbers estimate of
Lemma \ref{lem.law}. For this, we introduce for some fixed $\theta>0$ (which will be chosen later):
\begin{equation}\label{DthetaN}
  \mathcal{B}_\theta(s) := \bigcup_{i=1}^N \bigg\{
  \omega\in\Omega:\bigg|\frac{1}{N}\sum_{j=1}^N
  \na  V^\eta(\bar{X}_i^\eta(s)-\bar{X}_j^\eta(s)) 
  - \na  V^\eta*\bar{u}^\eta(s,\bar{X}_i^\eta)\bigg|>N^{-\theta}\bigg\},
\end{equation}
and we split $\Omega$ into the sets $\mathcal{B}_\theta(s)$ and 
$\mathcal{B}_\theta(s)^c$. Then $M_{2}= M_{21}+M_{22}$, where
\begin{align*}
  M_{21} &:=2\mathbb{E}\bigg(\int_{0}^T \frac{1}{N}\sum_{i = 1}^N 
  \bigg|\frac{1}{N}\sum_{j = 1}^N 
  \big(\nabla V^\eta(\bar{X}_i^\eta-\bar{X}_j^\eta)
  - \nabla V^\eta*\bar{u}^\eta(\bar{X}_i^\eta)\big) \bigg|^2
  \mathrm{1}_{\mathcal{B}_\theta(s)}\dd s\bigg), \\
  M_{22} &:= 2\mathbb{E}\bigg(\int_{0}^T \frac{1}{N}\sum_{i = 1}^N 
  \bigg|\frac{1}{N}\sum_{j = 1}^N 
  \big(\nabla V^\eta(\bar{X}_i^\eta-\bar{X}_j^\eta)
  - \nabla V^\eta*\bar{u}^\eta(\bar{X}_i^\eta)\big) \bigg|^2
  \mathrm{1}_{\mathcal{B}_\theta(s)^c}\dd s\bigg).
\end{align*}
We use Lemma \ref{lem.law} with $\psi_\eta = \na V^\eta$ to find that
\begin{align*}
  M_{21} &\leq C(T) \big(\|\nabla V^\eta\|_{L^\infty(\R^d)}^2 
  + \|\nabla V^\eta*\bar{u}^\eta \|_{L^\infty(\R^d)}^2 \big) 
  \sup_{0<s<T} \Prob(\mathcal{B}_\theta(s)) \\
  &\leq C(T)(N^{2\beta(\lambda+1)}+1)N^{2m(\beta(\lambda +1)
  + \theta-1/2)+1} \leq N^{-1/2-\eps},
\end{align*}
where the last step follows by taking $0<\theta<1/2 - \beta(\lambda +1)$ and $m\in\N$ sufficiently large. For $M_{22}$, the construction of $\mathcal{B}_\theta$ leads to 
\begin{align*}
  M_{22} \leq C(T)N^{-2\theta} \leq C(T)N^{-1/2-\eps},
\end{align*}
where the latter inequality follows from the fact that we can choose $1/4<\theta< 1/2-\beta(\lambda +1)$ as long as $\beta < 1/(4\lambda+4)$. We summarize the bounds on $M_1$, $M_2$, and $M_3$:
\begin{align}\label{4.MM}
  M(T) \leq C(T)N^{-1/2-\eps}\quad\mbox{for some }\eps>0.
\end{align}

\subsection{Estimation of $L(T)$}

An expression like $L(T)$, defined in \eqref{4.L}, has been estimated in \cite{Oel87} using a Taylor approximation and Fourier estimates in one space dimension. This approach is also feasible in higher space dimensions but it would become very tedious in notation. Here, we can reduce the assumptions on the potential $V^\eta$ compared to \cite{Oel87}, since we do not need any condition on the Fourier transform of the potential and we do not need to impose a compact support. 

We stress the fact that the following calculations do not require that $\Phi$ is of sub-Coulomb type. This assumption is needed to apply Theorem \ref{thm.prob}. The calculations below are also true for more singular kernels under the condition that Theorem \ref{thm.prob} holds.

Our idea is, as above, to split the integral over $\Omega$ 
in a mean-field part and a law-of-large-numbers part.
We add and subtract the empirical measure 
$$
  \bar\mu^\eta(s) := \frac{1}{N}\sum_{i=1}^N\delta_{\bar{X}_i^\eta(s)}
$$
of the intermediate problem \eqref{1.barX} to $L(T)$, recalling definition \eqref{4.L} of $L(T)$. Then $L(T)\le L_1+L_2$, where
\begin{align*}
  L_1 &= 2\E\bigg(\sup_{0<t<T}\bigg|\int_0^t\big\langle
  \mu^\eta-\bar\mu^\eta,(\na  V^\eta*\bar{u}^\eta)
  \cdot(\na Z^\eta*(f^\eta-g^\eta))\big\rangle\dd s\bigg|\bigg), \\
  L_2 &= 2\E\bigg(\sup_{0<t<T}\bigg|\int_0^t\big\langle
  \bar\mu^\eta-\bar{u}^\eta,
  (\na V^\eta*\bar{u}^\eta)\cdot(\na Z^\eta*(f^\eta-g^\eta))
  \big\rangle\dd s\bigg|\bigg). \nonumber
\end{align*}
The term $L_1$ can be considered as the mean-field part, while $L_2$
is the law-of-large-numbers part.
	
\textit{Step 1: Estimation of $L_1$.} We start with $L_1$ and add and subtract
$(\na V^\eta*\bar{u}^\eta)(X_i^\eta)(\na Z^\eta*(f^\eta-g^\eta))
(\bar{X}_i^\eta)$, leading to $L_1\le L_{11}+L_{12}$, where
\begin{align*}
  L_{11} &= 2\E\int_0^T\bigg|\frac{1}{N}\sum_{i=1}^N
  \big((\na V^\eta*\bar{u}^\eta)(s,X_i^\eta(s)) 
  - (\na V^\eta*\bar{u}^\eta)(s,\bar{X}_i^\eta(s))\big) \\
  &\phantom{xx}{}\times(\na Z^\eta*(f^\eta-g^\eta))
  (s,\bar{X}_i^\eta(s))\bigg|\dd s, \\
  L_{12} &= 2\E\int_0^T\bigg|\frac{1}{N}\sum_{i=1}^N
  (\na V^\eta*\bar{u}^\eta)(s,X_i^\eta(s)) \\
  &\phantom{xx}{}\times\big((\na Z^\eta*(f^\eta-g^\eta))
  (s,X_i^\eta(s))- (\na Z^\eta*(f^\eta-g^\eta))
  (s,\bar{X}_i^\eta(s))\big)\bigg|\dd s.
\end{align*}

\textit{Step 2a: Estimate for $L_{11}$.} We split $\Omega$ for each time $0<s<T$ into the sets $\mathcal{C}_\alpha(s)$ and
$\mathcal{C}_\alpha(s)^c$, use the definition  $\mathcal{C}_\alpha(s)^c=\{\max_i|X_i^\eta-\bar{X}_i^\eta|\le N^{-\alpha}\}$ in the integral over $\mathcal{C}_\alpha(s)^c$ (leading to the factor $N^{-\alpha}$) and Lemma \ref{lem.law} in the integral over $\mathcal{C}_\alpha(s)$ (leading to the factor $N^{-\gamma}$ for any $\gamma>0$). Then, by the mean-value theorem applied to $\na V^\eta*\bar{u}^\eta$,
\begin{align*}
  L_{11} &\le CN^{-\alpha}\|\mathrm{D}^2 V^\eta*\bar{u}^\eta
  \|_{L^\infty(\R^d)}\E\int_0^T\frac{1}{N}\sum_{i=1}^N
  \bigg|(Z^\eta*(f^\eta-g^\eta))(s,\bar{X}_i^\eta(s))\bigg|\dd s \\
  &\phantom{xx}{}+ C(T)\sup_{\omega\in\Omega}\sup_{0<s<T}
  \|Z^\eta*\na(f^\eta-g^\eta)\|_{L^\infty(\R^d)}
  \|\na V^\eta*\bar{u}^\eta\|_{L^\infty(\R^d)}
  \sup_{0<s<T}\Prob(\mathcal{C}_\alpha(s)).
\end{align*}	
We deduce from
\begin{align*}
  \|Z^\eta*\na(f^\eta-g^\eta)\|_{L^\infty(\R^d)} 
  &= \sup_{x\in\R^d} \bigg|\frac{1}{N} \sum_{i = 1}^N 
  \nabla V^\eta(x-X_i^\eta) - \nabla V^\eta*\bar{u}^\eta(x)\bigg| \\
  &\leq C\|\nabla V^\eta\|_{L^\infty(\R^d)} \le CN^{\beta(\lambda+1)}
\end{align*}
(see Lemma \ref{lem.DkV}) and  $\|\mathrm{D}^2V^\eta*\bar{u}^\eta\|_{L^\infty(\R^d)}\le C$ (Lemma \ref{lem.DkVbaru}) that
\begin{align*}
  L_{11} &\le CN^{-\alpha}\E\int_0^T\frac{1}{N}\sum_{i=1}^N
  \bigg|(Z^\eta*\na(f^\eta-g^\eta))(s,\bar{X}_i^\eta(s))\bigg|\dd s
  + C(T)N^{\beta(\lambda+1)-\gamma}.
\end{align*}
The admissible range for $\alpha$ will be chosen later.
It remains to estimate the first term on the right-hand side. To this end, we add and subtract the term $N^{-1}\sum_{j = 1}^N \nabla V^\eta(\bar{X}_j^\eta-\bar{X}_i^\eta)$:
\begin{align}\label{4.L_111_and_112}
  C&N^{-\alpha}\E\int_0^T\frac{1}{N}\sum_{i=1}^N
  \bigg|(Z^\eta*\na(f^\eta-g^\eta))(s,\bar{X}_i^\eta(s))\bigg|\dd s  \nonumber\\
  &= CN^{-\alpha}\E\int_0^T\frac{1}{N}\sum_{i=1}^N
  \bigg|(\na V^\eta*(\mu^\eta- \bar{u}^\eta))
  (s,\bar{X}_i^\eta(s))\bigg|\dd s  \nonumber \\
  &= CN^{-\alpha}\E\int_0^T\frac{1}{N}\sum_{i=1}^N
  \bigg|\frac{1}{N}\sum_{j = 1}^N \nabla V^\eta
  (X_j^\eta - \bar{X}_i^\eta) - \nabla V^\eta* \bar{u}^\eta
  (\bar{X}_i^\eta)\bigg|\dd s  \nonumber\\
  &\leq CN^{-\alpha}\E\int_0^T\frac{1}{N}\sum_{i=1}^N
  \bigg|\frac{1}{N}\sum_{j = 1}^N \nabla V^\eta(X_j^\eta 
  - \bar{X}_i^\eta) - \nabla V^\eta(\bar{X}_j^\eta - \bar{X}_i^\eta)
  \bigg|\dd s  \nonumber\\
  &\phantom{xx}+ CN^{-\alpha}\E\int_0^T\frac{1}{N}\sum_{i=1}^N
  \bigg|\frac{1}{N}\sum_{j = 1}^N \nabla V^\eta(\bar{X}_j^\eta 
  - \bar{X}_i^\eta) - \nabla V^\eta*\bar{u}^\eta
  (\bar{X}_i^\eta)\bigg|\dd s \nonumber \\
  &=: L_{111} + L_{112}.
\end{align}

For $L_{111}$, we again exploit the mean-field convergence in probability (Theorem \ref{thm.prob}) and the properties of the sets $\mathcal{C}_\alpha(s)$ and $\mathcal{C}_\alpha(s)^c$ (with possibly a different choice of $\alpha$ as for the terms before), defined in \eqref{Calpha}, similarly as before:
\begin{align*}
  L_{111} &\leq CN^{-\alpha} \|\nabla V^\eta\|_{L^\infty(\R^d)}
  N^{-\gamma} \\ 
  &\phantom{xx}+ C(T)\frac{N^{-\alpha}}{N^2}\sum_{i,j = 1}^N 
    \|\mathrm{D}^2 V^\eta\|_{L^\infty(\R^d)} \mathbb{E}\int_{0}^T 
  \big|X_j^\eta(s) - \bar{X}_j^\eta(s)\big| \mathrm{1}_{\mathcal{C}_\alpha(s)^c}\dd s \\
  &\leq CN^{-\alpha-\gamma}N^{\beta(\lambda+1)} 
  + C(T)N^{-\alpha}N^{\beta(\lambda+2)}N^{-\alpha} \\
  &= C(T)\big(N^{-\alpha-\gamma+ \beta(\lambda +1)}
  + N^{-2\alpha + \beta(\lambda +2)}\big).
\end{align*}
The exponent of the first term on the right-hand side is smaller than $-1/2$ if we choose $\gamma>0$ sufficiently large. The exponent of the second term is smaller than $-1/2$ if $\alpha>1/4-\beta(\lambda+2)/2$, which is possible if $\beta<1/(2\lambda)$, since then $1/4-\beta(\lambda/2+1)< 1/2-\beta(\lambda +1)$, which is needed to apply Theorem \ref{thm.prob}. This leads to
$L_{111} \leq C(T) N^{-1/2-\eps}$ for some $\eps>0$.

We turn to the term $L_{112}$, defined in \eqref{4.L_111_and_112}. We split $\Omega$ in the set $\mathcal{B}_\theta(s)$ and its complement $\mathcal{B}_\theta(s)^c$; see definition \eqref{DthetaN} for a suitable $\theta>0$. Hence, using Lemma \ref{lem.law}, because of $\|\na V^\eta\|_{L^\infty(\R^d)}\le CN^{\beta(\lambda+1)}$ by Lemma \ref{lem.DkV},
\begin{align*}
  L_{112} &\leq C(T)N^{-\alpha-\theta} 
  + C(T)\|\nabla V^\eta\Vert_{L^\infty(\R^d)}\sup_{0<s< T} \Prob(\mathcal{B}_\theta(s)) \\
  &\leq C(T)N^{-\alpha-\theta} +C(T)N^{\beta(\lambda+1)} N^{2m\beta(\lambda +1)}N^{m(\theta - 1/2) +1}.
\end{align*}
We choose $0<\theta < 1/2-\beta(\lambda +1)$ and $m$ sufficiently large to find that
$$
  \beta(\lambda+1) + 2m\bigg(\beta(\lambda+1)+\theta-\frac12\bigg) + 1
  < -\frac12.
$$
Furthermore, we have $-\alpha-\theta<-1/2$ by choosing $\theta>1/2-\alpha$, which is possible by the choice of $\alpha$. This yields $L_{112}\le CN^{-1/2-\eps}$ for some $\eps>0$. It follows from the estimates for $L_{111}$ and $L_{112}$ that, for some $\eps>0$,
\begin{align*}
  L_{11} \leq C(T)N^{-1/2-\eps}.
\end{align*}

\textit{Step 2b: Estimate for $L_{12}$.} We exploit the positivity of the diffusion constant $\sigma >0$. For this, we split $\Omega$ again into $\mathcal{C}_\alpha(s)$ and $\mathcal{C}_\alpha(s)^c$, defined in \eqref{Calpha}, and we estimate similarly as above. Lemma \ref{lem.DkVbaru} shows that 
$\|\nabla V^\eta*\bar{u}^\eta\|_{L^\infty(\R^d)} \leq C$. Then, by the mean-value theorem for the term involving the complement $\mathcal{C}_\alpha(s)^c$,
\begin{align*} 
  L_{12} &\le C(T)\E\int_0^T\frac{1}{N}\sum_{i=1}^N 
  \bigg|(\na Z^\eta*(f^\eta-g^\eta))(s,X_i^\eta(s))
  - (\na Z^\eta*(f^\eta-g^\eta))(s,\bar{X}_i^\eta(s))\bigg|\dd s \\
  &\leq C(T) N^{-\alpha}\E\int_0^T
  \|\mathrm{D}^2 Z^\eta*(f^\eta-g^\eta)\|_{L^\infty(\R^d)}\dd s \\
  &\phantom{xx}{}+ C(T)N^{-\gamma}\sup_{\omega \in \Omega}
  \|\na Z^\eta*(f^\eta-g^\eta)\|_{L^\infty(0,T;L^\infty(\R^d))}.
\end{align*}
We use Young's convolution inequality and Lemmas \ref{lem.DkV}--\ref{lem.DkVbaru}:
\begin{align*}
  \|\mathrm{D}^2 Z^\eta*(f^\eta-g^\eta)\|_{L^\infty(\R^d)}
  &\le \|\na Z^\eta\|_{L^2(\R^d)}\|\na(f^\eta-g^\eta)\|_{L^2(\R^d)} \\
  &\le CN^{\beta\lambda/2}\|\na(f^\eta-g^\eta)\|_{L^2(\R^d)}, \\
  \|\na Z^\eta*(f^\eta-g^\eta)\|_{L^\infty(\R^d)}
  &\le C\big(\|\na V^\eta\|_{L^\infty(\R^d)}
  + \|\na V^\eta*\bar{u}^\eta\|_{L^\infty(\R^d)}\big) \\
  &\le CN^{\beta(\lambda+1)}.
\end{align*}
Then, by Young's inequality,
\begin{align*}
  L_{12} &\le C(T)N^{-\alpha+\beta \lambda/2}\mathbb{E}\int_0^T\|\na(f^\eta-g^\eta)
  \|_{L^2(\R^d)}\dd s + C(T)N^{-\gamma+\beta(\lambda+1)} \\
  &\le \frac{\sigma}{2}\mathbb{E}\int_0^T\|\na(f^\eta-g^\eta)
  \|_{L^2(\R^d)}^2\dd s
  + C(\sigma,T)N^{-2\alpha+\beta d} + C(T)N^{-\gamma+\beta(\lambda+1)}.
\end{align*}
We choose $\alpha>1/4+\beta d/2$ and $\gamma>1/2+\beta(\lambda+1)$ to conclude that, for some $\eps>0$,
$$
  L_{12} \le \frac{\sigma}{2}\mathbb{E}\int_0^T
  \|\na(f^\eta-g^\eta)\|_{L^2(\R^d)}^2\dd s
  + C(\sigma,T)N^{-1/2-\eps}.
$$
The choice of $\alpha$ is consistent with the condition in Theorem \ref{thm.prob}, since $\beta < 1/(6\lambda +4)$. We summarize the estimates for $L_{11}$ and $L_{12}$:
\begin{align}\label{4.L1}
  L_1 \le \frac{\sigma}{2}\mathbb{E}\int_0^T
  \|\na(f^\eta-g^\eta)\|_{L^2(\R^d)}^2\dd s 
  + C(\sigma,T)N^{-1/2-\eps}.
\end{align}

\textit{Step 3: Estimation of $L_2$.} It remains to estimate the term
$$
  L_2 = 2\E\bigg(\sup_{0<t<T}\bigg|\int_0^t
  \big\langle\bar{\mu}^\eta-\bar{u}^\eta,(\na V^\eta*\bar{u}^\eta)
  \cdot(\na Z^\eta*(f^\eta-g^\eta))\big\rangle\dd s\bigg|\bigg).
$$
To simplify the presentation, we abuse the notation by using an integral notation instead of the dual product in 
\begin{align*}
  \int_{\R^d} Z^\eta(x-y)\bar{\mu}^\eta(y)\na V^\eta*\bar{u}^\eta(y)\dd y
  &:= \int_{\R^d}Z^\eta(x-y)\na V^\eta*\bar{u}^\eta(y)\dd\bar{\mu}^\eta(y) \\
  &:= \frac{1}{N}\sum_{i=1}^N Z^\eta(x-\bar{X}_i^\eta)
 \na V^\eta*\bar{u}^\eta(\bar{X}_i^\eta), \quad x\in \R^d.
\end{align*}
In this way, we can easier keep track of the variables. With this notation, we can rewrite the integrand of $L_2$ by exploiting the symmetry of $Z^\eta$ (see \eqref{4.symm}):
\begin{align*}
  \langle \bar\mu^\eta - \bar{u}^\eta,
  \na Z^\eta*(f^\eta-g^\eta)\cdot\na V^\eta*\bar{u}^\eta \rangle = \langle Z^\eta*(\nabla V^\eta*\bar{u}^\eta 
  (\bar\mu^\eta - \bar{u}^\eta)), \na(f^\eta-g^\eta)\rangle,
\end{align*}
where the last expression reads as
\begin{align*}
  \langle Z^\eta&*(\nabla V^\eta*\bar{u}^\eta 
  (\bar\mu^\eta-\bar{u}^\eta)), \nabla (f^\eta-g^\eta)\rangle \\
  &= \int_{\R^d}\na(f^\eta-g^\eta)(x)\int_{\R^d} 
  Z^\eta(x-y)(\nabla V^\eta *\bar{u}^\eta)(y)
  (\bar\mu^\eta(y)-\bar{u}^\eta(y))\dd y \dd x.
\end{align*}
It follows from the Cauchy--Schwarz and Young inequalities that
\begin{align}\label{L2}
  L_2 &\leq C(\sigma) \mathbb{E}\int_{0}^T 
  \|Z^\eta*(\nabla V^\eta*\bar{u}^\eta 
  (\bar\mu^\eta-\bar{u}^\eta))\|_{L^2(\R^d)}^2 \dd s \\ 
  &\phantom{xx}
  + \frac{\sigma}{2}\mathbb{E}\int_{0}^T 
  \|\na(f^\eta-g^\eta)\|_{L^2(\R^d)}^2\dd s. \nonumber 
\end{align}
The first term in \eqref{L2} can be estimated in a similar way as in the proof of Lemma \ref{lem.law}. Indeed, we write
\begin{align*}
  &Z^\eta*(\nabla V^\eta*\bar{u}^\eta(\bar\mu^\eta-\bar{u}^\eta))(x) \\
  &\phantom{xx}= \frac{1}{N} \sum_{i = 1}^N Z^\eta(x-\bar{X}_i^\eta)
  (\nabla V^\eta*\bar{u}^\eta)(\bar{X}_i^\eta) 
  - \int_{\R^d}Z^\eta(x-y)(\nabla V^\eta*\bar{u}^\eta)(y)
  \bar{u}^\eta(y) \dd y
\end{align*}
and introduce for $x\in \R^d$ and $i=1,\ldots,N$ the functions
\begin{align*}
  h_i(x):= Z^\eta(x-\bar{X}_i^\eta)\nabla V^\eta*\bar{u}^\eta
  (\bar{X}_i^\eta) - \int_{\R^d}Z^\eta(x-y)
  \nabla V^\eta*\bar{u}^\eta(y)\bar{u}^\eta(y)\dd y.
\end{align*}
For fixed $x\in \R^d$, the functions $h_i(x)$ are independent (since starting from i.i.d.\ initial conditions) with vanishing mean.
Hence, by Fubini's theorem,
\begin{align}\label{4.EZV}
  \mathbb{E}\int_{0}^T \|Z^\eta*(\nabla V^\eta*\bar{u}^\eta 
  (\bar\mu^\eta- \bar{u}^\eta))\|_{L^2(\R^d)}^2 \dd s 
  = \int_{0}^T\int_{\R^d}\mathbb{E}\bigg|\frac{1}{N}\sum_{i = 1}^N 
  h_i(x)\bigg|^2\dd x \dd s.
\end{align}
Due to the independence of $h_i(x)$, which means that $\mathbb{\E} h_i(x)h_j(x)=0$ for $i\neq j$, we see that 
\begin{align*}
  \mathbb{E}\bigg|\frac{1}{N}\sum_{i = 1}^N h_i(x)\bigg|^2  
  &= \frac{1}{N^2} \sum_{i = 1}^N\mathbb{E}|h_i(x)|^2 
  = \frac{1}{N}\int_{\R^d}Z^\eta(x-z)^2|\nabla V^\eta*\bar{u}^\eta(z)|^2
  \bar{u}^\eta(z)\dd z \\
  &\phantom{xx}+ \frac{2}{N}\bigg|Z^\eta*\big((\nabla V^\eta*\bar{u}^\eta)\bar{u}^\eta\big)(x) 
  \int_{\R^d} Z^\eta(x-z)(\nabla V^\eta*\bar{u}^\eta)(z)\bar{u}^\eta(z) 
  \dd z\bigg| \\
  &\phantom{xx}+ \frac{1}{N}|Z^\eta*\big((\nabla V^\eta*\bar{u}^\eta)\bar{u}^\eta\big)(x) |^2.
\end{align*}
We insert this expression into \eqref{4.EZV} and apply Young's convolution inequality: 
\begin{align*}
  \mathbb{E}\int_{0}^T&\|Z^\eta*(\nabla V^\eta*\bar{u}^\eta
  (\bar\mu^\eta-\bar{u}^\eta))\|_{L^2(\R^d)}^2 \dd s \\
  &\leq \frac{C(T)}{N}\sup_{0<t<T} \|\nabla V^\eta*\bar{u}^\eta\|_{L^\infty(\R^d)}^2 
  \|(Z^\eta)^2*\bar{u}^\eta  \|_{L^1(\R^d)} \\
  &\phantom{xx}+ \frac{C(T)}{N}\sup_{0<t<T}
  \|Z^\eta*\big(\bar{u}^\eta
  \nabla V^\eta*\bar{u}^\eta\big)\|_{L^2(\R^d)}^2 \\
  &\leq \frac{C(T)}{N}\sup_{0<t<T}\|\nabla V^\eta*\bar{u}^\eta
  \|_{L^\infty(\R^d)}^2 \\
  &\phantom{xx}\times\big(\|Z^\eta\|_{L^{2}(\R^d)}^2 
  \|\bar{u}^\eta\|_{L^1(\R^d)} 
  + \|Z^\eta\|_{L^2(\R^d)}^2\|\bar{u}^\eta\|_{L^1(\R^d)}^2\big).
\end{align*}
Lemma \ref{lem.DkZ} shows that $\|Z^\eta\|_{L^2(\R^d)}^2\leq N^{\beta \lambda}$, while Lemma \ref{lem.DkVbaru} yields the estimate $\|\nabla V^\eta*\bar{u}^\eta\|_{L^\infty(\R^d)} \leq C$. Hence, together with mass conservation for $\bar{u}^\eta$, we infer that
\begin{align*}
  \mathbb{E}\int_{0}^T &\|Z^\eta*(\na V^\eta*\bar{u}^\eta
  (\bar\mu^\eta- \bar{u}^\eta))\|_{L^2(\R^d)}^2 \dd s \\
  &\leq C(T)N^{-1+\beta \lambda}
  \leq C(T)N^{-1/2-\eps},
\end{align*}
choosing $\beta < 1/(2\lambda)$. Consequently, we have
\begin{align*}
  L_2 \leq \frac{\sigma}{2}\mathbb{E}\int_{0}^T
  \|\na(f^\eta-g^\eta)(s)\|_{L^2(\R^d)}^2 
  \dd s + C(\sigma,T)N^{-1/2-\eps}.
\end{align*}
Adding this inequality to estimate \eqref{4.L1} for $L_1$ gives
\begin{equation}\label{4.LL}
  L(T) \leq L_1 + L_2 \le \sigma\E\int_{0}^T\|\na(f^\eta-g^\eta)\|_{L^2(\R^d)}^2\dd s
  + C(\sigma,T)N^{-1/2-\eps}.
\end{equation}

\begin{remark}[Asymptotically vanishing diffusion]\label{rem.sigma}\rm
The condition $\sigma>0$ in Assumption (A1) is required to estimate the terms $L_{12}$ and $L_2$. We claim that we may replace $\sigma$ by $\sigma_N=N^{-s}$ for some $s>0$. Indeed, the estimate for $L_{12}$ becomes
$$
  L_{12} \le \frac{\sigma_N}{2}\mathbb{E}\int_0^T
  \|\na(f^\eta-g^\eta)\|_{L^2(\R^d)}^2\dd s
  + \frac{C(T)}{\sigma_N}\big(N^{-2\alpha+\beta d}
  + N^{-\gamma+\beta(\lambda+1)}\big).
$$
The first term on the right-hand side can be absorbed by the left-hand side of \eqref{4.KLM} with $\sigma_N$ instead of $\sigma$. The second term is bounded by $C(T)N^{-1/2-\eps}$ if 
$-2\alpha + \beta d + s < -1/2$ and $-\gamma+\beta(\lambda+1) + s 
< -1/2$, which is fulfilled if $\alpha>1/4+\beta d/2+s/2$ and $\gamma>1/2+\beta(\lambda+1)+s$. While $\gamma$ can be chosen arbitrarily large, the upper bound $\alpha < 1/2- \beta(\lambda +1)$ can be fulfilled for sufficiently small $s$ (with an upper bound depending on $\beta$). The estimate for $L_2$ reads as
$$
  L_2 \le \frac{\sigma_N}{2}\mathbb{E}\int_0^T
  \|\na(f^\eta-g^\eta)(s)\|_{L^2(\R^d)}^2\dd s
  + \frac{C}{\sigma_N}N^{-1+\beta \lambda},
$$
and the last term is of order $N^{-1/2-\eps}$ for some $\eps>0$ if
$-1+\beta\lambda+s<-1/2$. Again, this yields a restriction on $s>0$.
\qed\end{remark}

The following remark is used in Section \ref{sec.fluct} to characterize the limiting intermediate fluctuations process.

\begin{remark}[Generalized estimate for $L(T)$]\label{rem.F2}\rm
We claim that similar estimations as above show that there exists $C>0$ such that for any $F\in W^{2,\infty}(\R^d)$,
\begin{align*}
  L^*(T) &:= \E\bigg(\sup_{0<t<T}\bigg|\int_0^t
  \big\langle\mu^\eta-\bar{u}^\eta,
  (\na V^\eta*(\mu^\eta-\bar{u}^\eta))\cdot\na F\big\rangle\dd s\bigg| 
  \bigg) \\
  &\le C(\sigma,T)N^{-1/2-\eps}\big(\|\na F\|_{L^{\infty}(\R^d)} 
  + \|\na F\|_{L^{\infty}(\R^d)}^{2}
  + \|\mathrm{D}^2F\|_{L^\infty(\R^d)}\big) \\
  &\phantom{xx}
  + \sigma\E\int_0^T\|\na(f^\eta-g^\eta)\|_{L^2(\R^d)}^2\dd s. 
\end{align*}
Our final estimate \eqref{4.LL} for $L(T)$ shows this result with $\na F=\na V^\eta*\bar{u}^\eta$. We need to verify that the right-hand side depends on the $L^\infty(\R^d)$ norms of $\na F$ and $\mathrm{D}^2F$ as stated above. As in the estimate for $L(T)$, we add and subtract $\bar{u}^\eta$ such that $|L^*(T)|\le L^*_1+L_2^*$, where
\begin{align*}
  L^*_1 &= \E\bigg(\sup_{0<t<T}\bigg|\int_0^t\big\langle
  \mu^\eta-\bar{\mu}^\eta,(\na Z^\eta*(f^\eta-g^\eta))\cdot\na F
  \big\rangle\dd s\bigg|\bigg), \\
  L^*_2 &= \E\bigg(\sup_{0<t<T}\bigg|\int_0^t\big\langle
  \bar{\mu}^\eta-\bar{u}^\eta,(\na Z^\eta*(f^\eta-g^\eta))\cdot\na F
  \big\rangle\dd s\bigg|\bigg).
\end{align*}
We add and subtract $\na F(X_i^\eta)(\na Z^\eta*(f^\eta-g^\eta))(\bar{X}_i^\eta)$ in $L_1^*$, leading to $L_1^*\le L^*_{11}+L_{12}$, where
\begin{align*}
  L_{11}^* &= \E\int_0^T\bigg|\frac{1}{N}\sum_{i=1}^N
  (\na F(X_i^\eta) - \na F(\bar{X}_i^\eta))(\na Z^\eta*(f^\eta-g^\eta))
  (\bar{X}_i^\eta)\bigg|\dd s, \\
  L_{12}^* &= \E\int_0^T\bigg|\frac{1}{N}\sum_{i=1}^N
  \na F(X_i^\eta)\big((\na Z^\eta*(f^\eta-g^\eta)(X_i^\eta)
  - (\na Z^\eta*(f^\eta-g^\eta))(\bar{X}_i^\eta)\big)\bigg|\dd s.
\end{align*}
Step 2a (using the mean-valued theorem, which yields $\mathrm{D}^2 F$) shows that
$$
  L_{11}^* \le CN^{-1/2-\eps}\|\mathrm{D}^2F\|_{L^\infty(\R^d)}.
$$
Furthermore, we infer from Step 2b that 
$$
  L_{12}^*\le C(\sigma,T)N^{-1/2-\eps}\|\na F\|_{L^\infty(\R^d)}
  +  \frac{\sigma}{2}\int_0^T
  \|\na(f^\eta-g^\eta)\|_{L^2(\R^d)}^2\dd s.
$$
The term $L_2^*$ is estimated according to 
\begin{align*}
  L_2^* \le C(\sigma)\E\int_0^T\|Z^\eta*(\na F
  (\bar{\mu}^\eta-\bar{u}^\eta))\|_{L^2(\R^d)}^2\dd s 
  + \frac{\sigma}{2}\E\int_0^T\|\na(f^\eta-g^\eta)\|_{L^2(\R^d)}^2\dd s.
\end{align*}
In Step 3, we have shown that the first term is bounded by
$$
  \E\int_0^T\|Z^\eta*(\na F(\bar{\mu}^\eta-\bar{u}^\eta))
  \|_{L^2(\R^d)}^2\dd s \le C(T)N^{-1/2-\eps}\|\na F\|_{L^\infty(\R^d)}^{2}.
$$
Summarizing the previous inequalities, we have proved the claim.
\qed\end{remark}


\subsection{End of the proof}

We insert estimates \eqref{4.K1} for $K_1$, \eqref{4.K6} for $K_6$, \eqref{4.MM} for $M(T)$, and \eqref{4.LL} for $L(T)$ into \eqref{4.KLM} to find that 
\begin{align*}
  \E\Big(&\sup_{0<t<T}\|(f^\eta-g^\eta)(t)\|_{L^2(\R^d)}^2\Big)
  + \sigma\E\int_0^T\|\na(f^\eta-g^\eta)(s)\|_{L^2(\R^d)}^2\dd s \\
  &\le \E\|(f^\eta-g^\eta)(0)\|_{L^2(\R^d)}^2
  + C(\sigma,T)N^{-1/2-\eps}.
\end{align*}
It remains to estimate the first term on the right-hand side. We write
\begin{align*}
  \|(f^\eta&-g^\eta)(0)\|_{L^2(\R^d)}^2
  = \|f^\eta(0)\|_{L^2(\R^d)}^2 - 2\langle f^\eta(0),g^\eta(0)\rangle  
  + \|g^\eta(0)\|_{L^2(\R^d)}^2 \\
  &= \frac{1}{N^2}\sum_{i,j=1}^N V^\eta(X_i^\eta(0)-X_j^\eta(0)) 
  - \frac{2}{N}\sum_{i=1}^N(V^\eta* u_0)(X_i^\eta(0))
  + \langle u_0, V^\eta*u_0\rangle.
\end{align*}
Since $X_i^\eta(0)=\zeta_i$ and $\zeta_1,\ldots,\zeta_N$ are independent with common density function $u_0$, we find from Young's convolution inequality that
\begin{align*}
  \E\|(f^\eta&-g^\eta)(0)\|_{L^2(\R^d)}^2
  = \frac{1}{N^2}\sum_{i,j=1,\,i\neq j}^N\int_{\R^d}\int_{\R^d}
  V^\eta(x-y)u_0(x)u_0(y)\dd x\dd y \\
  &\phantom{xx}+ \frac{V^\eta(0)}{N} - \frac{2}{N}\sum_{i=1}^N\int_{\R^d}
  (V^\eta*u_0)(x)u_0(x)\dd x
  + \int_{\R^d}(V^\eta*u_0)(x)u_0(x)\dd x \\
  &= \frac{N(N-1)}{N^2}\int_{\R^d}(V^\eta*u_0)(x)u_0(x)\dd x
  + \frac{V^\eta(0)}{N} 
  - \int_{\R^d}(V^\eta*u_0)(x)u_0(x)\dd x \\
  &= -\frac{1}{N}\int_{\R^d}(V^\eta*u_0)(x)u_0(x)\dd x
  + \frac{V^\eta(0)}{N} \\
  &\le N^{-1}\|V^\eta*u_0\|_{L^\infty(\R^d)}\|u_0\|_{L^1(\R^d)}
  + N^{-1}\|V^\eta\|_{L^\infty(\R^d)} \\
  &\le CN^{-1} + CN^{\beta\lambda-1} \le CN^{-1/2-\eps},
\end{align*}
if $\beta<1/(2\lambda)$, where we used Lemma \ref{lem.DkV} to estimate $\|V^\eta\|_{L^\infty(\R^d)}\le CN^{\beta\lambda}$ and the property $\|V^\eta*u_0\|_{L^\infty(\R^d)}\le C$ from Lemma \ref{lem.DkVbaru}, since $u_0 \in L^1(\R^d)\cap L^\infty(\R^d)$. This finishes the proof of Theorem \ref{thm.L2}.

\begin{remark}[Convergence of $\mu^\eta-u$]\label{rem.diff}\rm
We claim that 
\begin{align}\label{4.remdiff}
  \lim_{N\to \infty}\E\Big(\sup_{0<t<T}\sup_{\phi\in\mathcal{W}}
  \big|\langle\mu^\eta(t)-u(t),\phi\rangle\big|\Big) = 0,
\end{align}
where $\mathcal{W}=\{f\in W^{1,\infty}(\R^d)\cap L^2(\R^d)\cap L^1(\R^d):
\|f\|_{W^{1,\infty}(\R^d)}+\|f\|_{L^2(\R^d)} + \|f \|_{L^1(\R^d)}=1\}$. We estimate
\begin{align*}
  \langle\mu^\eta-u,\phi\rangle
  &= \langle\mu^\eta-\bar{u}^\eta,Z^\eta*\phi\rangle
  + \langle\bar{u}^\eta-u,Z^\eta*\phi\rangle
  + \langle\mu^\eta-u,\phi-Z^\eta*\phi\rangle \\
  &\le \|(\mu^\eta-\bar{u}^\eta)*Z^\eta\|_{L^2(\R^d)}\|\phi\|_{L^2(\R^d)}
  + \|\bar{u}^\eta*Z^\eta-u*Z^\eta\|_{L^2(\R^d)}\|\phi\|_{L^2(\R^d)} \\
  &\phantom{xx}+ \langle\mu^\eta-u,\phi-Z^\eta*\phi\rangle \\
  &\le \|f^\eta-g^\eta\|_{L^2(\R^d)} 
  + \|Z^\eta \ast(\bar{u}^\eta - u)\|_{L^2(\R^d)} + \|\phi-Z^\eta*\phi\|_{L^\infty(\R^d)}.
\end{align*}
Theorem \ref{thm.L2} implies that the first term on the right-hand side converges to zero as $\eta\to 0$ uniformly in $(0,T)$ for any $T>0$. It can be shown, by using Lemma \ref{lem.wLinfty}, that also the second term tends to zero, since $\|Z^\eta*(\bar{u}^\eta - u)\|_{L^2(\R^d)} \leq C \|\bar{u}^\eta - u\|_{L^1(\R^d)\cap L^\infty(\R^d)}$. Finally, 
$$
  \|\phi-Z^\eta*\phi\|_{L^\infty(\R^d)}
  \le C\eta \|\phi \|_{L^1(\R^d)\cap L^\infty(\R^d)} \to 0
$$ 
shows that the third term converges too. 
\qed\end{remark}


\section{Fluctuations}\label{sec.fluct}

In this section, we prove Theorem \ref{thm.fluct}. We define the intermediate fluctuations
$$
  \mathcal{F}^\eta(t) = \sqrt{N}(\mu^\eta-\bar{u}^\eta)(t),
$$
which can be written in weak form for test functions $\psi$ as 
\begin{align*}
  \langle\mathcal{F}^\eta(t),\psi\rangle  
  = \frac{1}{\sqrt{N}}\sum_{i=1}^N\psi(t,X_i^\eta(t))
  - \sqrt{N}\langle\bar{u}^\eta(t),\psi\rangle.
\end{align*}
We derive first the stochastic differential equation satisfied by the intermediate fluctuations. The definition of the dual operator $\mathcal{L}^*$, applied to test functions $\psi$, is as follows (see Appendix \ref{sec.dual} for details)
\begin{align*}
  \mathcal{L}^*\psi = -\pa_{t}\psi - \sigma\Delta\psi 
  - \kappa\big((\na\Phi*u)\cdot\na\psi - \na\Phi*(u\na\psi)\big).
\end{align*} 
In the following lemma, we give a connection between the SDE fulfilled by the intermediate fluctuations process and the dual operator $\mathcal{L}^*$. This will be crucial for the proof of the main theorem (Theorem \ref{thm.fluct}).

\begin{lemma}\label{lem.dF}
Let $\psi\in C^\infty(\R^d\times(0,\infty))$ be a test function. Then for any $t>0$, 
\begin{align*}
  &\dd\langle\mathcal{F}^\eta,\psi\rangle
  = -\langle\mathcal{F}^\eta,\mathcal{L}^*\psi\rangle\dd t
  + \frac{\sqrt{2\sigma}}{\sqrt{N}}\sum_{i=1}^N\na\psi(X_i^\eta(t))
  \cdot\dd W_i(t) \\
  &\phantom{x}+ \kappa\bigg(\frac{1}{\sqrt{N}}\langle\mathcal{F}^\eta,
  (\na V^\eta*\mathcal{F}^\eta)\cdot\na\psi\rangle
  + \big\langle\mathcal{F}^\eta,(\na V^\eta*\bar{u}^\eta -  \na\Phi*u)
  \cdot\na\psi \big\rangle \\
  &\phantom{xxxxxx}- \big\langle\mathcal{F}^\eta,
 \na V^\eta*(\bar{u}^\eta\na\psi) - \na\Phi*(u\na\psi)\big\rangle\bigg)\dd t.
\end{align*}
\end{lemma}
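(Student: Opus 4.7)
The plan is to apply Itô's formula to the process $\psi(t, X_i^\eta(t))$ using the SDE \eqref{1.X}, sum over $i$, and couple the resulting identity with the weak formulation of the intermediate PDE \eqref{1.baru}. Concretely, Itô's formula gives
\begin{align*}
  \dd \psi(t, X_i^\eta(t)) &= \bigl(\pa_t \psi + \sigma \Delta \psi\bigr)(t, X_i^\eta(t))\,\dd t
  + \frac{\kappa}{N}\sum_{j=1}^N \na V^\eta(X_i^\eta - X_j^\eta)\cdot \na\psi(X_i^\eta)\,\dd t \\
  &\phantom{xx}+ \sqrt{2\sigma}\, \na \psi(X_i^\eta)\cdot \dd W_i.
\end{align*}
Summing over $i$ and dividing by $\sqrt N$ produces the evolution of the first piece of $\langle\mathcal{F}^\eta,\psi\rangle$. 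Using $N^{-1}\sum_j \na V^\eta(\cdot - X_j^\eta) = \na V^\eta*\mu^\eta$, the drift of this piece can be written as $\sqrt N\bigl(\langle \mu^\eta, \pa_t\psi + \sigma\Delta\psi\rangle + \kappa\langle \mu^\eta, (\na V^\eta*\mu^\eta)\cdot\na\psi\rangle\bigr)\dd t$.

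Next I would test \eqref{1.baru} against $\psi$, integrate by parts to move $\sigma\Delta$ and $\diver$ onto $\psi$, and multiply by $\sqrt N$ to get
\begin{equation*}
  \sqrt N\,\dd\langle \bar{u}^\eta, \psi\rangle = \sqrt N\bigl(\langle \bar{u}^\eta, \pa_t\psi + \sigma\Delta\psi\rangle
  + \kappa \langle \bar{u}^\eta, (\na V^\eta*\bar{u}^\eta)\cdot\na\psi\rangle\bigr)\dd t.
\end{equation*}
Subtracting this from the previous identity immediately yields $\langle\mathcal{F}^\eta, \pa_t\psi + \sigma\Delta\psi\rangle\,\dd t$ for the linear part and the noise term $\sqrt{2\sigma}N^{-1/2}\sum_i \na\psi(X_i^\eta)\cdot\dd W_i$. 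The remaining nonlinear drift is
\begin{equation*}
  \kappa\sqrt N\,\bigl\langle \mu^\eta(\na V^\eta*\mu^\eta) - \bar{u}^\eta(\na V^\eta*\bar{u}^\eta),\na\psi\bigr\rangle\,\dd t.
\end{equation*}

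The key algebraic step is to expand this using the identity $\mu^\eta = \bar{u}^\eta + N^{-1/2}\mathcal{F}^\eta$, which gives the bilinear decomposition
\begin{equation*}
  \mu^\eta(\na V^\eta*\mu^\eta) - \bar{u}^\eta(\na V^\eta*\bar{u}^\eta)
  = \tfrac{1}{\sqrt N}\bar{u}^\eta(\na V^\eta*\mathcal{F}^\eta) + \tfrac{1}{\sqrt N}\mathcal{F}^\eta(\na V^\eta*\bar{u}^\eta) + \tfrac{1}{N}\mathcal{F}^\eta(\na V^\eta*\mathcal{F}^\eta).
\end{equation*}
The quadratic term contributes $\kappa N^{-1/2}\langle \mathcal{F}^\eta, (\na V^\eta*\mathcal{F}^\eta)\cdot\na\psi\rangle\,\dd t$, and the middle term contributes $\kappa\langle \mathcal{F}^\eta, (\na V^\eta*\bar{u}^\eta)\cdot\na\psi\rangle\,\dd t$ directly. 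For the first term I would use the antisymmetry $\na V^\eta(x) = -\na V^\eta(-x)$ (which follows from the radial symmetry of $V^\eta$) together with Fubini to obtain
\begin{equation*}
  \bigl\langle \bar{u}^\eta, (\na V^\eta*\mathcal{F}^\eta)\cdot\na\psi\bigr\rangle
  = -\bigl\langle \mathcal{F}^\eta, \na V^\eta * (\bar{u}^\eta \na\psi)\bigr\rangle.
\end{equation*}
This is the main technical point: swapping the roles of $\bar{u}^\eta$ and $\mathcal{F}^\eta$ through the kernel turns the second bilinear term into the form appearing in the statement.

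Finally, to recognise $-\langle\mathcal{F}^\eta, \mathcal{L}^*\psi\rangle$ in the drift, I would add and subtract the quantities $\kappa\langle\mathcal{F}^\eta, (\na\Phi*u)\cdot\na\psi\rangle$ and $\kappa\langle\mathcal{F}^\eta, \na\Phi*(u\na\psi)\rangle$. The sum of these terms together with the already identified $\langle\mathcal{F}^\eta, \pa_t\psi + \sigma\Delta\psi\rangle$ reconstructs $-\langle\mathcal{F}^\eta, \mathcal{L}^*\psi\rangle$, while the remainders combine with the two bilinear drifts from the previous step to form exactly the two difference brackets in the claim. I expect the main obstacle to be bookkeeping, in particular keeping signs and the direction of the convolution correct when applying the antisymmetry identity; once that identity is established, the rest is a direct rearrangement of terms.
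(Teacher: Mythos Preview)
Your proposal is correct and follows essentially the same route as the paper's proof: apply It\^o's formula to $\psi(t,X_i^\eta(t))$, subtract the weak formulation of \eqref{1.baru}, expand the quadratic drift via $\mu^\eta=\bar u^\eta+N^{-1/2}\mathcal{F}^\eta$, use the antisymmetry of $\na V^\eta$ to rewrite $\langle \bar u^\eta,(\na V^\eta*\mathcal{F}^\eta)\cdot\na\psi\rangle=-\langle\mathcal{F}^\eta,\na V^\eta*(\bar u^\eta\na\psi)\rangle$, and finally add and subtract the $\na\Phi*u$ terms to isolate $-\langle\mathcal{F}^\eta,\mathcal{L}^*\psi\rangle$. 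The only difference is cosmetic---the paper writes the bilinear decomposition as a double integral rather than substituting $\mu^\eta=\bar u^\eta+N^{-1/2}\mathcal{F}^\eta$ directly---but the content is identical.
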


\begin{proof}
We wish to calculate
\begin{align}\label{8.dF}
  \dd\langle\mathcal{F}^\eta,\psi\rangle
  = \sqrt{N}\dd\langle\mu^\eta-\bar{u}^\eta,\psi\rangle
  = \frac{1}{\sqrt{N}}\sum_{i=1}^N\dd\psi(\cdot,X_i^\eta)
  - \sqrt{N}\dd\langle\bar{u}^\eta,\psi\rangle.
\end{align}
We infer from It\^{o}'s lemma that
\begin{align*}
  \dd\psi(t,X_i^\eta(t)) &= \big[\pa_t\psi(t,X_i^\eta(t))
  + \kappa\big(\mu^\eta(t)*\na V^\eta (X_i^\eta(t))\big)\cdot\na\psi(t,X_i^\eta(t)) \\
  &\phantom{xx}+ \sigma\Delta\psi(t,X_i^\eta(t))\big]\dd t
  + \sqrt{2\sigma}\na\psi(t,X_i^\eta(t))\cdot\dd W_i(t).
\end{align*}
Summing this equation over $i=1,\ldots,N$ and using $\sqrt{N}\langle\mu^\eta,\psi\rangle
= N^{-1/2}\sum_{i=1}^N\psi(X_i^\eta)$, we find that
\begin{align}\label{8.aux1}
  \frac{1}{\sqrt{N}}\sum_{i=1}^N\dd\psi(t,X_i^\eta(t))
  &= \sqrt{N}\langle\mu^\eta(t),\pa_t\psi(t)\rangle
  + \sqrt{N}\kappa\langle\mu^\eta(t),
  (\mu^\eta(t)*\na V^\eta)\cdot\na\psi(t)\rangle \\
  &\phantom{xx}+ \sqrt{N}\sigma\langle\mu^\eta,\Delta\psi(t)\rangle
  + \frac{\sqrt{2\sigma}}{\sqrt{N}}\sum_{i=1}^N\na\psi(t,X_i^\eta(t))
  \cdot\dd W_i(t). \nonumber
\end{align}
The intermediate equation \eqref{1.baru} is written in weak form as
\begin{align*}
  0 &= -\int_0^t\langle\bar{u}^\eta(s),\pa_t\psi(s)\rangle\dd s 
  + \langle\bar{u}^\eta(t),\psi(t)\rangle 
  - \langle\bar{u}^\eta(0),\psi(0)\rangle \\
  &\phantom{xx}
  - \sigma\int_0^t\langle\bar{u}^\eta(s),\Delta\psi(s)\rangle\dd s 
  - \kappa\int_0^t\langle\bar{u}^\eta\na V^\eta*\bar{u}^\eta(s),
  \na\psi(s)\rangle\dd s,
\end{align*}
or in differential form, multiplied by $\sqrt{N}$,
\begin{align}\label{8.aux2}
  \sqrt{N}\dd\langle\bar{u}^\eta,\psi\rangle
  = \sqrt{N}\big(\langle\bar{u}^\eta,\pa_t\psi+\sigma\Delta\psi\rangle
  + \kappa\langle\bar{u}^\eta\na V^\eta*\bar{u}^\eta,\na\psi\rangle
  \big)\dd t.
\end{align}
Inserting \eqref{8.aux1} and \eqref{8.aux2} into \eqref{8.dF} yields
\begin{align}\label{8.aux3}
  \dd\langle\mathcal{F}^\eta,\psi\rangle
  &= \langle\mathcal{F}^\eta,\pa_t\psi+\sigma\Delta\psi\rangle
  + \frac{\sqrt{2\sigma}}{\sqrt{N}}\sum_{i=1}^N
  \na\psi(\cdot,X_i^\eta)\cdot\dd W_i \\
  &\phantom{xx}+ \sqrt{N}\kappa\langle\mu^\eta,
  (\mu^\eta*\na V^\eta)\cdot\na\psi\rangle
  - \sqrt{N}\kappa\langle\bar{u}^\eta,
  (\bar{u}^\eta*\na V^\eta)\cdot\na\psi\rangle. \nonumber
\end{align}
We wish to formulate the last two expressions in terms of $\mathcal{F}^\eta$. For this, we compute, abusing slightly the notation,
\begin{align*}
  &\sqrt{N}\langle\mu^\eta,
  (\mu^\eta*\na V^\eta)\cdot\na\psi\rangle
  - \sqrt{N}\langle\bar{u}^\eta,
  (\bar{u}^\eta*\na V^\eta)\cdot\na\psi\rangle \\
  &\phantom{x}= \sqrt{N}\int_{\R^d}\int_{\R^d}
  \na V^\eta(x-y)\cdot\na\psi(x)\dd\mu^\eta(y)\dd\mu^\eta(x) \\
  &\phantom{xxx}
  - \sqrt{N}\int_{\R^d}\int_{\R^d}\na V^\eta(x-y)\cdot\na\psi(x)
  \bar{u}^\eta(x)\bar{u}^\eta(y)\dd y\dd x \\
  &\phantom{x}
  = \sqrt{N}\int_{\R^d}\int_{\R^d}\na V^\eta(x-y)\cdot\na\psi(x)
  \dd(\mu^\eta(x)-\bar{u}^\eta(x))\dd(\mu^\eta(y)-\bar{u}^\eta(y)) \\
  &\phantom{xxx}
  + \sqrt{N}\int_{\R^d}\int_{\R^d}\na V^\eta(x-y)\cdot\na\psi(x)
  \dd(\mu^\eta(x)-\bar{u}^\eta(x))\dd\bar{u}^\eta(y) \\
  &\phantom{xxx}
  + \sqrt{N}\int_{\R^d}\int_{\R^d}\na V^\eta(x-y)\cdot\na\psi(x)
  \dd\bar{u}^\eta(x)\dd(\mu^\eta(y)-\bar{u}^\eta(y)) \\
  &\phantom{x}= N^{-1/2}\langle\mathcal{F}^\eta,
  (\mathcal{F}^\eta*\na V^\eta)\cdot\na\psi\rangle
  + \langle\mathcal{F}^\eta,(\bar{u}^\eta*\na V^\eta)\cdot\na\psi\rangle
  + \langle\bar{u}^\eta,(\mathcal{F}^\eta*\na V^\eta)\cdot\na\psi\rangle
  \\
  &\phantom{x}
  = N^{-1/2}\langle\mathcal{F}^\eta,(\mathcal{F}^\eta*\na V^\eta)
  \cdot\na\psi\rangle
  + \langle\mathcal{F}^\eta,(\bar{u}^\eta*\na V^\eta)\cdot\na\psi\rangle
  - \langle\mathcal{F}^\eta,\na V^\eta*(\bar{u}^\eta\na\psi)\rangle,
\end{align*}
where the last step follows from the anti-symmetry of $\na V^\eta$ (see \eqref{4.symm}). We add and subtract the dual operator $\mathcal{L}^*$ to rewrite the last two terms (multiplied by $\kappa$):
\begin{align*}
  \kappa\langle\mathcal{F}^\eta&,(\bar{u}^\eta*\na V^\eta)\cdot\na\psi\rangle
  - \kappa\langle\mathcal{F}^\eta,
  \na V^\eta*(\bar{u}^\eta\na\psi)\rangle \\
  &= -\langle\mathcal{F}^\eta,\mathcal{L}^*\psi\rangle
  + \kappa\big\langle\mathcal{F}^\eta,(\na V^\eta*\bar{u}^\eta)\cdot\na\psi
  - \na V^\eta*(\bar{u}^\eta\na\psi)\big\rangle \\
  &\phantom{xx}- \kappa\big\langle\mathcal{F}^\eta,(\na\Phi*u)
  \cdot\na\psi- \na\Phi*(u\na\psi)\big\rangle 
  - \big\langle\mathcal{F}^\eta,\pa_t \psi + \sigma \Delta \psi\big\rangle.
\end{align*}
Inserting these expressions into \eqref{8.aux3} shows the statement of the lemma.
\end{proof}

Let $\psi=T_\phi^t$, where $T_\phi^t$ is the solution to the backward dual problem $\mathcal{L}^*\psi=0$ with initial datum $\phi$, which is a sufficiently regular test function on $\R^d$. (Later, we will take $\phi$ to be a Schwartz function in the proof of Theorem \ref{thm.fluct}.) Then the first term on the right-hand side of the expression in Lemma \ref{lem.dF} vanishes such that, in integrated form and for $0<s<t$,
\begin{align}\label{8.F123}
  &\langle\mathcal{F}^\eta(t),\phi\rangle 
  - \langle\mathcal{F}^\eta(s),T_\phi^t(s)\rangle
  = F_1 + F_2 + F_3, \quad\mbox{where } \\
  & F_1 = \frac{\sqrt{2\sigma}}{\sqrt{N}}\sum_{i=1}^N\int_s^t
  \na T_\phi^t(r,X_i^\eta(r))\cdot\dd W_i(r), \nonumber \\
  & F_2 = \frac{\kappa}{\sqrt{N}}\int_s^t\langle\mathcal{F}^\eta(r),
  (\na V^\eta*\mathcal{F}^\eta(r))\cdot\na T_\phi^t(r)\rangle\dd r, 
  \nonumber \\
  & F_3 = \kappa\int_s^t\big\langle\mathcal{F}^\eta(r),
  (\na V^\eta*\bar{u}^\eta(r))\cdot\na T_\phi^t(r)
  - \na V^\eta*(\bar{u}^\eta\na T_\phi^t)(r)\big\rangle\dd r 
  \nonumber \\
  &\phantom{F_3=} - \kappa\int_s^t\big\langle\mathcal{F}^\eta(r),
  (\na\Phi*u)\cdot\na T_\phi^t(r) - \na\Phi*(u\na T_\phi^t)(r)
  \big\rangle\dd r. \nonumber
\end{align}
The term $F_1$ describes the Gaussian fluctuations in the limit, while we claim that the terms $|F_2|$ and $|F_3|$ vanish (in expectation) in the limit.

\begin{lemma}\label{lem.F23}
Let $0<s<t$ and $\eta=N^{-\beta}$ with $0<\beta< 1/(8\lambda +12)$ and let all assumptions of Theorem \ref{thm.fluct} hold. Then
$\E(|F_2|+|F_3|)\to 0$ as $N\to\infty$ (or equivalently $\eta\to 0$).
\end{lemma}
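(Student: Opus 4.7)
The plan is to treat $F_2$ and $F_3$ separately, using the mean-square convergence of Theorem \ref{thm.L2} (via Remark \ref{rem.F2}) for $F_2$ and a quantitative regularity-based argument for $F_3$. Both parts rely on the uniform $W^{2,\infty}$ bound for $T_\phi^t$ supplied by Lemma \ref{lem.estT}, together with Assumption (A4b).

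For $F_2$, substituting $\mathcal{F}^\eta=\sqrt{N}(\mu^\eta-\bar{u}^\eta)$ yields
$$
  F_2 = \kappa \sqrt{N}\int_s^t \big\langle \mu^\eta-\bar{u}^\eta,\,(\na V^\eta*(\mu^\eta-\bar{u}^\eta))\cdot\na T_\phi^t\big\rangle \dd r.
$$
This is precisely the quantity $L^*(T)$ of Remark \ref{rem.F2} with $F=T_\phi^t$, and by Lemma \ref{lem.estT} the relevant $W^{2,\infty}$ norms of $T_\phi^t$ are uniformly bounded in $\eta$. Combining Remark \ref{rem.F2} with Theorem \ref{thm.L2} to absorb the residual term $\sigma\E\int\|\na(f^\eta-g^\eta)\|_{L^2}^2\dd s$, one obtains $\E|F_2|\le C\sqrt{N}\,N^{-1/2-\eps}=CN^{-\eps}\to 0$.

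For $F_3$, write $F_3=\kappa\int_s^t\langle\mathcal{F}^\eta(r),H^\eta(r)\rangle \dd r$ with
$$
  H^\eta = \big[(\na V^\eta*\bar{u}^\eta)\cdot\na T_\phi^t - \na V^\eta*(\bar{u}^\eta\na T_\phi^t)\big] - \big[(\na\Phi*u)\cdot\na T_\phi^t - \na\Phi*(u\na T_\phi^t)\big].
$$
By adding and subtracting the analogous expression with $\bar{u}^\eta$ replaced by $u$, decompose $H^\eta=H^\eta_{\mathrm{pde}}+H^\eta_{\mathrm{mol}}$, where $H^\eta_{\mathrm{pde}}$ measures the PDE error $\bar{u}^\eta-u$ (still integrated against $\na V^\eta$) and $H^\eta_{\mathrm{mol}}$ measures the mollification error $\na V^\eta-\na\Phi$ (integrated against $u$ and $u\na T_\phi^t$). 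Both pieces share the commutator structure
$$
  \int \na K(x-y)\,\rho(y)\,\big[\na T_\phi^t(x)-\na T_\phi^t(y)\big]\,\dd y,
$$
which, by Taylor's theorem and $\|\mathrm{D}^2 T_\phi^t\|_{L^\infty}\le C$, is pointwise controlled by a weighted convolution of the density against $|z|\,|\na K(z)|$. Assumption (A4b) with $K>1/(2\beta)$ propagates high Sobolev regularity through the PDEs \eqref{1.baru} and \eqref{1.u}, yielding the quantitative rate $\|\bar{u}^\eta-u\|_{L^1\cap L^\infty}\le C\eta^{K_0}$ with $K_0$ sufficiently large (and analogously for the mollification piece, using the smoothness of $u$ and $u\na T_\phi^t$). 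Combining these, $\|H^\eta\|_{L^\infty}\le CN^{-1/2-\eps'}$ for some $\eps'>0$. Since $|\langle\mathcal{F}^\eta,H^\eta\rangle|\le\sqrt{N}(\|\mu^\eta\|_{\mathcal{M}}+\|\bar{u}^\eta\|_{L^1})\|H^\eta\|_{L^\infty}\le C\sqrt{N}\|H^\eta\|_{L^\infty}$, we conclude $\E|F_3|\le C\sqrt{N}\cdot N^{-1/2-\eps'}\to 0$.

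The main obstacle is precisely the step of upgrading the qualitative convergences $\bar{u}^\eta\to u$ and $V^\eta\to\Phi$ into quantitative rates fast enough to beat both the $\sqrt{N}$ from the fluctuation scaling and the $N^{\beta(\lambda+1)}$ blow-up of $\|\na V^\eta\|_{L^\infty}$. This is exactly where the regularity threshold $K>1/(2\beta)$ of Assumption (A4b) enters: it guarantees that the mollification-induced error $\eta^{K_0}=N^{-\beta K_0}$ decays strictly faster than $N^{-1/2}$ after accounting for the kernel growth, thereby closing the estimate.
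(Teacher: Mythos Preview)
Your treatment of $F_2$ is correct and matches the paper's argument exactly: apply Remark~\ref{rem.F2} with $F=T_\phi^t$, invoke the uniform $W^{2,\infty}$ bounds from Lemma~\ref{lem.estT}, and absorb the gradient term via Theorem~\ref{thm.L2}.

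Your treatment of $F_3$, however, contains a genuine gap. You claim that Assumption~(A4b) yields $\|\bar{u}^\eta-u\|_{L^1\cap L^\infty}\le C\eta^{K_0}$ with $K_0$ large, and hence $\|H^\eta\|_{L^\infty}\le CN^{-1/2-\eps'}$. This is false: the PDE error $\bar{u}^\eta-u$ and the mollification error $(\chi^\eta*\na\Phi-\na\Phi)*u$ are each only $O(\eta)$, \emph{regardless} of how smooth $u_0$ is. These errors arise from the mollification $V^\eta=\chi^\eta*\Phi$, and the rate is dictated by the first nonvanishing moment of $\chi^\eta$, not by the regularity of the data (see Lemmas~\ref{lem.Linfty} and~\ref{lem.higher}, which prove only the rate $\eta$). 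Consequently $\|H^\eta\|_{L^\infty}\le C\eta=CN^{-\beta}$, and since $\beta<1/(8\lambda+12)<1/2$, your naive bound gives $\E|F_3|\le C\sqrt{N}\,\eta=CN^{1/2-\beta}\to\infty$.

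The paper circumvents this by an \emph{iteration argument}, and this is where Assumption~(A4b) actually enters. Write $F_3=\kappa\int_s^t\langle\mathcal{F}^\eta(r),\psi^1(r)\rangle\dd r$ with $\psi^1:=H^\eta$ satisfying $\|\psi^1\|_{W^{k,\infty}}\le C\eta$. Instead of bounding $\langle\mathcal{F}^\eta,\psi^1\rangle$ directly, one re-applies the fluctuation identity~\eqref{8.F123} with $\psi^1(r)$ in the role of $\phi$, producing new terms $F_1^2,F_2^2,F_3^2$ plus an initial-time contribution. The terms $F_1^2,F_2^2$ and the initial contribution vanish as before (now helped by the extra factor $\eta$ in $\psi^1$), while $F_3^2$ involves a new test function $\psi^2$ with $\|\psi^2\|_{L^\infty}\le C\eta^2$. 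Iterating $K^*>1/(2\beta)$ times yields a remainder bounded by $C\sqrt{N}\,\eta^{K^*}=CN^{1/2-K^*\beta}\to 0$. Each iteration costs one spatial derivative (because estimating $\psi^{k+1}$ requires $\|\na T^r_{\psi^k}\|_{L^\infty}\le C\|\psi^k\|_{W^{1,\infty}}$), which is exactly why (A4b) demands $u_0\in W^{K,1}\cap W^{K,\infty}$ with $K>1/(2\beta)$: it supplies enough regularity to run the iteration $K^*$ times, not to improve the single-step PDE rate.
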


\begin{proof}
The proof is inspired by Oelschl\"ager \cite{Oel87} who proved a related result for the viscous porous medium equation. We note that $F_2$ almost equals $L(T)$, defined in \eqref{4.L}, except for the different scaling factor $\sqrt{N}$, $\na V^\eta$ instead of $\na Z^\eta$, and $\na T_\phi^t$ instead of $\na V^\eta*\bar{u}^\eta$. Then the estimate for $L^*(T)$ from Remark \ref{rem.diff} leads to
$$
  \E|F_2| \le CN^{-\eps} + CN^{1/2}\E\int_0^T
  \|\na(f^\eta-g^\eta)\|_{L^2(\R^d)}^2\dd s\to 0,
$$
as $N \to \infty$, by Theorem \ref{thm.L2}. We estimate $F_3$ by an iteration argument. Setting
\begin{align}\label{8.defpsi1}
  \psi^1_{\eta,t,\phi}(r) 
  &:= (\na V^\eta*\bar{u}^\eta(r))\cdot\na T_\phi^t(r)
  - \na V^\eta*(\bar{u}^\eta\na T_\phi^t)(r) \\
  &\phantom{xx}- (\na\Phi*u)\cdot\na T_\phi^t(r) 
  + \na\Phi*(u\na T_\phi^t)(r), \nonumber
\end{align}
we can write $F_3^1:=F_3$ more compactly as
\begin{align*}
  F_3^1 = \kappa\int_s^t\langle\mathcal{F}^\eta(r),
  \psi^1_{\eta,t,\phi}(r)\rangle\dd r.
\end{align*}
(The upper index in $F_3^1$ will be later used in the iteration argument.) We decompose $\psi_{\eta,t,\phi}^1$ as
\begin{align*}
  & \sup_{0<t<T}\sup_{0<r<t}\|\psi_{\eta,t,\phi}^1(r)\|_{L^\infty(\R^d)}
  \le F_{31} + F_{32} + F_{33} + F_{34}, \quad\mbox{where} \\
  & F_{31} = \sup_{0<t<T}\sup_{0<r<t}\|((\na V^\eta - \na\Phi)
  *\bar{u}^\eta)\cdot\na T_\phi^t(r)\|_{L^\infty(\R^d)}, \\
  & F_{32} = \sup_{0<t<T}\sup_{0<r<t}\|\na\Phi*(\bar{u}^\eta-u)
  \cdot\na T_\phi^t(r)\|_{L^\infty(\R^d)}, \\
  & F_{33} = \sup_{0<t<T}\sup_{0<r<t}\|\na\Phi*((u-\bar{u}^\eta)
  \na T_\phi^t)(r)\|_{L^\infty(\R^d)}, \\
  & F_{34} = \sup_{0<t<T}\sup_{0<r<t}\|(\na\Phi-\na V^\eta)
  *(\bar{u}^\eta\na T_\phi^t)(r)\|_{L^\infty(\R^d)}.
\end{align*}
It follows from  the mean-value theorem and $\int_{\R^d}|x| \chi^\eta(x) \dd x \leq C\eta$ that  $\|\chi^\eta * w - w\|_{L^\infty(\R^d)} \leq C\eta \|\nabla w \|_{L^\infty(\R^d)}$. Hence,
\begin{align*}
  \|(\na V^\eta - \na\Phi)*\bar{u}^\eta \|_{L^\infty(\R^d)} 
  \leq C\eta \|\mathrm{D^2}\Phi*\bar{u}^\eta \|_{L^\infty(\R^d)},
\end{align*}
where the latter norm is bounded, since $\lambda < d-2$ and
\begin{align*}
  |\mathrm{D^2}\Phi*\bar{u}^\eta(x)|
  &\leq \int_{B_1(0)}|\mathrm{D^2} \Phi|*\bar{u}^\eta \dd x 
  + \int_{B_1(0)^c}|\mathrm{D^2} \Phi| * \bar{u}^\eta \dd x \\
  &\leq C \|\bar{u}^\eta \|_{L^1(\R^d)\cap L^\infty(\R^d)} 
  \leq C(T),
\end{align*}
by Lemma \ref{lem.wLinfty}. We infer from $\sup_{0<t<T}\sup_{0<r<t}\|\nabla T_\phi^t (r)\|_{L^\infty(\R^d)}\leq C\|\phi\|_{W^{1,\infty}(\R^d)}$ by Lemma \ref{lem.estT} that
\begin{align*}
  F_{31} \leq C(T)\eta.
\end{align*}
A similar argument leads to
\begin{align*}
  F_{34}&\le C(T)\eta \sup_{0<t<T}\sup_{0<r<t}
  \|\bar{u}^\eta\na T_\phi^t(r)\|_{L^1(\R^d)\cap L^\infty(\R^d)} \\
  &\le C(T)\eta\|\bar{u}^\eta \|_{L^1(\R^d)\cap L^\infty(\R^d)}
  \|\phi\|_{W^{1,\infty}(\R^d)},
\end{align*}
where the last estimate follows from Lemmas \ref{lem.Linfty} and \ref{lem.estT}. We deduce from a similar estimate as in the proof of Lemma \ref{lem.DkVbaru} that
\begin{align*}
  F_{32} &\le \sup_{0<t<T}\sup_{0<r<t}
  \|\na\Phi*(\bar{u}^\eta-u)(r)\|_{L^\infty(\R^d)}
  \|\na T_\phi^t(r)\|_{L^\infty(\R^d)} \\
  &\le C(T)\|\phi \|_{W^{1,\infty}(\R^d)}
  \sup_{0<r<T}\|(\bar{u}^\eta-u)(r)\|_{L^1(\R^d)\cap L^\infty(\R^d)} 
  \le C(T)\eta,
\end{align*}
where the last step follows from Lemmas \ref{lem.wLinfty} and \ref{lem.estT}. Similar arguments show that
\begin{align*}
  F_{33} &\le \sup_{0<r<T}\|(u-\bar{u}^\eta)\na T_\phi^t(r)
  \|_{L^1(\R^d)\cap L^\infty(\R^d)} 
  \le C(T)\eta \|\phi\|_{W^{1,\infty}(\R^d)}.
\end{align*}
Consequently, 
$$
  \sup_{0<t<T}\sup_{0<r<t}\|\psi_{\eta,t,\phi}^1(r)\|_{L^\infty(\R^d)}
  \le C(T)\eta\|\phi\|_{W^{1,\infty}(\R^d)}.
$$
A similar estimate can be derived for the derivatives of $\psi_{\eta,t,\phi}^1$ by putting them on $\bar{u}^\eta$, $u$, and $T_\phi^t$:
\begin{equation}\label{8.psi1}
  \sup_{0<t<T}\sup_{0<r<t}\|\mathrm{D}^k
  \psi_{\eta,t,\phi}^1(r)\|_{L^\infty(\R^d)}
  \le C(T)\eta \|\phi\|_{W^{k+1,\infty}(\R^d)} 
  \quad\mbox{for all }k\le K^*,
\end{equation}
where $K^*\in\N$ will be determined at the end of the proof. Here, we have used estimates for $\bar{u}^\eta$, $u$ in $W^{K^*,\infty}(\R^d)\cap W^{K^*,1}(\R^d)$ and for $T_\phi^t$ in the space $W^{K^*+1,\infty}(\R^d)\cap W^{K^*+1,1}(\R^d)$; see Lemma \ref{lem.higher} and Lemma \ref{lem.estT}. These estimates need the higher regularity Assumption (A4b). 

Notice that a naive estimation now would lead to 
\begin{align*}
  |F_3^1| &= \bigg| \int_s^t\langle\mathcal{F}^\eta(r),
    \psi^1_{\eta,t,\phi}(r)\rangle\dd r \bigg| 
  \le \int_{0}^t|\langle\mathcal{F}^\eta(r),
  \psi^1_{\eta,t,\phi}(r)\rangle|\dd r \\
  &\le C(T)\sqrt{N}\sup_{0<t<T}\sup_{0<r<t}
  \|\psi_{\eta,t,\phi}^1(r)\|_{L^\infty(\R^d)} \le C(T)\eta\sqrt{N} 
  = C(T)N^{1/2-\beta},
\end{align*}
which diverges as $N\to\infty$ because of $\beta < 1/2$. Therefore, we need a more sophisticated argument. For this, we use formulation \eqref{8.F123} for $s=0$, replace $\phi$ by $\psi_{\eta,t,\phi}^t(r)$ for fixed $r$, and integrate over $r\in(s,t)$ (where $s$ is now another variable). This yields
\begin{align}\label{8.dF2}
  & \int_{s}^t\langle\mathcal{F}^\eta(r),
  \psi^1_{\eta,t,\phi}(r)\rangle\dd r 
  - \int_{s}^t\langle\mathcal{F}^\eta(0),
  T_{\psi_{\eta,t,\phi}^1(r)}^r(0)\rangle\dd r
  = F_1^2 + F_2^2 + F_3^2, \quad\mbox{where } \\
  & F_1^2 = \frac{\kappa\sqrt{2\sigma}}{\sqrt{N}}\sum_{i=1}^N
  \int_{s}^t\int_{0}^r\na T_{\psi_{\eta,t,\phi}^1(r)}^r
  (v,X_i^\eta(v))\cdot\dd W_i(v)\dd r, \nonumber \\
  & F_2^2 = \frac{\kappa}{\sqrt{N}}\int_{s}^t\int_{0}^r
  \langle\mathcal{F}^\eta(v),
  (\na V^\eta*\mathcal{F}^\eta)
  \cdot\na T^{r}_{\psi^1_{\eta,t,\phi}(r)}(v)\rangle\dd v\dd r, 
  \nonumber \\
  & F_3^2 = \kappa\int_{s}^t\int_{0}^r\langle\mathcal{F}^\eta(v),
  \psi^2_{\eta,t,r,\phi}(v)\rangle\dd v\dd r, \nonumber 
\end{align}
where we introduced the next iteration as
\begin{align*}
  \psi^2_{\eta,t,r,\phi}(v) 
  &:= (\na V^\eta*\bar{u}^\eta)
  \cdot\na T^{r}_{\psi^1_{\eta,t,\phi}(r)}(v) 
  - \na V^\eta*(\bar{u}^\eta\na T^r_{\psi^1_{\eta,t,\phi}(r)}(v)) \\
  &\phantom{xx}- (\na\Phi*u)\cdot\na T^{r}_{\psi^1_{\eta,t,\phi}(r)}(v) 
  + \na\Phi*(u\na T^r_{\psi^1_{\eta,t,\phi}(r)}(v)).
\end{align*}
With this iteration, we obtain a better bound than for $\psi^1_{\eta,t,\phi}$. Indeed, by similar estimates as for $\psi_{\eta,t,\phi}^1$ and the fact that
\begin{align}\label{8.DkT2}
  \|\mathrm{D}^k T_{\psi^1_{\eta,t,\phi}(r)}^r\|_{L^\infty(\R^d)}
  \le C(T)\|\psi^1_{\eta,t,\phi}(r)\|_{W^{k,\infty}(\R^d)}
  \le C(T)\eta
\end{align}
(because of \eqref{8.psi1} and Lemma \ref{lem.estT}), the following bound holds:
\begin{align*}
  |F_3^2(t,s)| \leq C(T) \sqrt{N}\sup_{s<r<t}\sup_{s<v<r} 
  \|\psi^2_{\eta,t,r,\phi}(v)\|_{L^\infty(\R^d)}.
\end{align*}
The norm is estimated by using \eqref{8.DkT2} and Lemmas \ref{lem.wLinfty} and \ref{lem.estT}:
\begin{align*}
  \|\psi^2_{\eta,t,r,\phi}(v)\|_{L^\infty(\R^d)}
  &\le C(T)\eta\|\na T_{\psi^1_{\eta,t,\phi}(r)}^r (v)\|_{L^{\infty}(\R^d)} \|\bar{u}^\eta (v)
  \|_{L^1(\R^d)\cap L^\infty(\R^d)}\\
  &\phantom{xx}+ C(T) \|T_{\psi^1_{\eta,t,\phi}(r)}^r (v) \|_{W^{1,\infty}(\R^d)}\sup_{0<v<T}
  \|(\bar{u}^\eta-u)(v)\|_{L^1(\R^d)\cap L^\infty(\R^d)}\\
  &\leq C(T)\eta \|\psi_{\eta,t,\phi}^1(r)\|_{W^{1,\infty}(\R^d)} 
  \leq C(T)\eta^2.
\end{align*}
Again, by putting the derivatives on $u$, $\bar{u}^\eta$, and $T_{\psi^1_{\eta,t,\phi}(r)}^r (s)$, we find from \eqref{8.psi1} that
\begin{align*}
  \|\mathrm{D}^k\psi^2_{\eta,t,r,\phi}(s)\|_{L^\infty(\R^d)} 
  \leq C(T) \eta \|\mathrm{D}^{k+1} \psi_{\eta,t,\phi}^1(r) \|_{L^\infty(\R^d)}
  \leq C(T)\eta^2 \|\phi\|_{W^{k+2,\infty}(\R^d)}. 
\end{align*}
Observe that in every iteration, we gain a power of $\eta$, which is ensured by a finite norm of $\bar{u}^\eta$ and $u$ in $W^{k+1,\infty}(\R^d)\cap W^{k+1,1}(\R^d)$.

After the second iteration, we infer from \eqref{8.dF2} and
\begin{align*}
  |F_3^2| \leq \bigg|\int_{s}^{t} \int_{0}^r 
  \langle\mathcal{F}^\eta(s),\psi^2_{\eta,t,r,\phi}(v)\rangle
  \dd v\dd r \bigg| 
  \leq C(T) \sqrt{N}\|\psi^2_{\eta,t,r,\phi}(v)\|_{L^\infty(\R^d)} 
\end{align*}
an estimate for $F_3^1$:
\begin{align*}
  \mathbb{E}|F_3^1|
  &\le \mathbb{E}\bigg|\int_{s}^t\langle\mathcal{F}^\eta(r),
  \psi^1_{\eta,t,\phi}(r)\rangle\dd r \bigg| \\
  &\le \mathbb{E} \big|F_1^2 + F_2^2 \big|  
  + \mathbb{E}\bigg|\int_{s}^t\langle\mathcal{F}^\eta(0),
  T_{\psi_{\eta,t,\phi}^1(r)}^r(0)\rangle\dd r\bigg|
  +  C(T) \sqrt{N}\eta^2,
\end{align*}
where the term with respect to $\mathcal{F}^\eta(0)$ can be bounded uniformly in $N$ and $\eta$, since $(\eta^{-1} T_{\psi_{\eta,t,\phi}^1(r)}^r(0))$ is a sequence of uniformly bounded smooth functions with bounded derivatives and since the initial conditions are i.i.d. This leads to
\begin{align*}
  \eta\mathbb{E}\bigg|\int_{s}^t\langle\mathcal{F}^\eta(0),
  \eta^{-1}T_{\psi_{\eta,t,\phi}^1(r)}^r(0)\rangle\dd r\bigg| 
  \leq \eta\int_{s}^t\mathbb{E}\big|\langle\mathcal{F}^\eta(0),
  \eta^{-1}T_{\psi_{\eta,t,\phi}^1(r)}^r(0)\rangle\big| \dd r 
  \leq C(T) \eta 
\end{align*}
and hence to
\begin{align*}
  \mathbb{E}|F_3^1|\le \mathbb{E} \big|F_1^2 + F_2^2 \big| 
  + C(T)\eta +  C(T) \sqrt{N}\eta^2,
\end{align*}
for a different constant $C(T)>0$.

After $\ell$ iterations, we need the condition $u_0 \in W^{\ell+1}(\R^d)$ in order to bound the gradient of $\psi^\ell_{\eta}$ (to shorten the notation, we omit most of the indices here). This gives
\begin{align}\label{8.F_3_1_iteration}
  \mathbb{E}|F_3^1|&\le\mathbb{E}\bigg|\int_{s}^t
  \langle\mathcal{F}^\eta(r),\psi^1_{\eta,t,\phi}(r)\rangle\dd r 
  \bigg| \\
  &\le \big|\mathbb{E}(F_1^2 + F_2^2) \big| + C(T)\eta
  + C(T)\mathbb{E}\bigg|\int_{s}^{t} \int_{0}^r \langle\mathcal{F}^\eta(v),\psi^2_{\eta,t,r,\phi}(v)\rangle
  \dd v\dd r \bigg| \nonumber \\
  & \leq \sum_{k=2}^{\ell}\big(\big|\mathbb{E}(F_1^k + F_2^k) \big| + C(T)\eta^{k-1}\big) + C(T)\sqrt{N}\eta^\ell,  \nonumber
\end{align}
where $F_1^k$ and $F_2^k$ are defined iteratively by replacing $\psi^{k-1}_\eta(s)$ with the associated dual backward process $T^r_{\psi^{k-1}_\eta(r)}(s)$ (we omit the indices for a clearer readability). Note that in the $k$-th iteration, terms involving the initial fluctuations $\mathcal{F}^\eta(0)$ can be bounded by $C(T)\eta^k$, since $\psi^{k-1}_\eta(s)$ is bounded by $\eta^{k-1}$ times a uniform constant, and we use Lemma \ref{lem.estT} and the fact that the initial conditions are i.i.d.\ with density $u_0$.

It remains to estimate $F_1^k$ and $F_2^k$ for $k\geq 2$. First, for $k=2$, we use the Burkholder--Davis--Gundy inequality (similarly as for the term $K_6$ in Step 6 of the proof of Theorem \ref{thm.L2}; see Section \ref{sec.second}):
\begin{align*}
  \E|F_1^2| \le \sqrt{N}\bigg(\frac{C(\sigma,T)}{N}
  \E\int_{s}^t\int_{0}^r\big\langle\mu^\eta(v),
  |\na T_{\psi^1_{\eta,t,\phi}(r)}^{r}(v)|^2
  \big\rangle\dd v\bigg)^{1/2}.
\end{align*}
We infer from 
$$
  \|\na T_{\psi^1_{\eta,t,\phi}(r)}^{r}(v)
  \|_{L^\infty(\R^d)}^2
  \le C(T)\|\psi^1_{\eta,t,\phi}(r)\|_{W^{1,\infty}(\R^d)}^2
  \le C(T)\eta^2
  = C(T)N^{-2\beta}
$$ 
that $\E|F_1^2|$ converges to zero as $N\to\infty$:
$$
  \E|F_1^2|\le C(T)N^{-\beta}\to 0\quad\mbox{as }N\to\infty.
$$

Taking into account the estimate of $L^*(T)$ from Remark \ref{rem.F2}, we find for $F_2^2$ that
\begin{align*}
  \E|F_2^2| &\leq \sqrt{N}\E\int_0^t\int_0^r\bigg|
  \langle(\mu^\eta-\bar{u}^\eta),
  (\na V^\eta*(\mu^\eta-\bar{u}^\eta))
  \cdot\na T_{\psi^1_{\eta,t,\phi}}^r(s)\rangle\bigg|\dd s\dd r \\
  &\le C(\sigma,T)N^{-\eps}\sup_{0<t,r<T}
  \big(\|\na T_{\psi^1_{\eta,t,\phi}}^r\|_{L^\infty(\R^d)} 
  +  \|\na T_{\psi^1_{\eta,t,\phi}}^r\|_{L^\infty(\R^d)}^2
  + \|\mathrm{D}^2 T_{\psi^1_{\eta,t,\phi}}^r\|_{L^\infty(\R^d)}\big) \\
  &\phantom{xx}+ \sigma N^{1/2}\E\int_0^T
  \|\na(f^\eta-g^\eta)\|_{L^2(\R^d)}^2\dd s.
\end{align*}
Thus, it follows from Theorem \ref{thm.L2} that
\begin{align*}
  \E|F_2^2|\le C(\sigma,T)N^{-\eps} \to 0.
\end{align*}

Repeating the previous arguments, the $k$th iterations $F_1^k$ and $F_2^k$ are of order $N^{-\eps}$ and also converge to zero. Hence,
\begin{align*}
  \sum_{k=2}^\ell \mathbb{E}\big|F_1^k + F_2^k\big| \to 0
\end{align*} 
for any $\ell \geq 2$. Finally, choosing $K^*\in\N$ such that $K^*>1/(2\beta)$, we conclude from \eqref{8.F_3_1_iteration} that
\begin{align*}
  \mathbb{E}|F_3| = \mathbb{E}|F_3^1|
  \leq \sum_{k=2}^{K^*}\big(\mathbb{E}|F_1^k + F_2^k|
  + C(T)\eta^{k-1}\big) + C(T)N^{1/2-K^*\beta} \to 0,
\end{align*}
where $C(T)$ depends on the $W^{K^*,\infty}(\R^d)\cap W^{K^*,1}(\R^d)$ norm of $\bar{u}^\eta$ and $u$.
\end{proof}

\begin{lemma}\label{lem.fourier}
Let the assumptions of Theorem \ref{thm.fluct} hold. Then for all $0\le s<t<\infty$ and $\theta\in\R$,
\begin{align*}
  \E\bigg|\E\bigg(\exp\big[\mathrm{i}\theta\langle\mathcal{F}^\eta(t),
  \phi\rangle \big]\bigg|\mathscr{F}_s\bigg) - \exp\bigg(\mathrm{i}\theta
  \langle\mathcal{F}^\eta(s),T_\phi^t(s)\rangle 
   - \sigma\theta^2\int_s^t\langle u(r),|\na T_\phi^t(r)|^2\rangle
  \dd r \bigg)\bigg|\to 0
\end{align*}
as $\eta=N^{-\beta}\to 0$, where $u$ solves \eqref{1.u}, $T_\phi^t$ solves \eqref{4.back}, and $\mathscr{F}_s = \sigma(\langle\mathscr{F}_{r}, \phi \rangle:0\leq r\leq s$, $\phi$ is a Schwartz function$)$.
\end{lemma}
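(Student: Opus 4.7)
The plan is to combine the SDE decomposition from Lemma \ref{lem.dF} with an It\^o computation of a conditional characteristic function and a Gronwall argument on a Volterra equation, following the strategy of \cite[Sec.~3]{Oel87}. First, I would apply Lemma \ref{lem.dF} with the test function $\psi(r,\cdot) = T_\phi^t(r,\cdot)$, for which $\mathcal{L}^*\psi = 0$ by construction, so that the first term on the right-hand side vanishes and yields the decomposition \eqref{8.F123},
\begin{equation*}
  \langle \mathcal{F}^\eta(t), \phi\rangle = \langle\mathcal{F}^\eta(s), T_\phi^t(s)\rangle + F_1 + F_2 + F_3.
\end{equation*}
Since $e^{\mathrm{i}\theta\langle\mathcal{F}^\eta(s), T_\phi^t(s)\rangle}$ is $\mathscr{F}_s$-measurable and of unit modulus, factoring it out of the conditional expectation reduces the target to showing $\E\big|\E[e^{\mathrm{i}\theta(F_1+F_2+F_3)}|\mathscr{F}_s] - U(t)\big| \to 0$, where $U(t) := \exp\big(-\sigma\theta^2 \int_s^t \langle u(r), |\na T_\phi^t(r)|^2\rangle \dd r\big)$. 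Using $|e^{\mathrm{i}\theta x}-e^{\mathrm{i}\theta y}| \leq |\theta||x-y|$ together with Lemma \ref{lem.F23} replaces $F_1+F_2+F_3$ by $F_1$ up to an error of order $|\theta|\E(|F_2|+|F_3|) \to 0$.

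It then remains to estimate $\E|\rho^\eta(t) - U(t)|$, where $\rho^\eta(r) := \E[e^{\mathrm{i}\theta F_1(r)}|\mathscr{F}_s]$ with $F_1(r) = \sqrt{2\sigma/N}\sum_i \int_s^r \na T_\phi^t(v,X_i^\eta(v))\cdot\dd W_i(v)$. I would apply It\^o's formula to $e^{\mathrm{i}\theta F_1(r)}$: its quadratic variation is $\langle F_1\rangle_r = 2\sigma \int_s^r \langle\mu^\eta(v), |\na T_\phi^t(v)|^2\rangle \dd v$, and taking $\E[\cdot\,|\,\mathscr{F}_s]$ kills the bounded stochastic integral, giving
\begin{equation*}
  \rho^\eta(r) = 1 - \sigma\theta^2 \E\bigg[\int_s^r e^{\mathrm{i}\theta F_1(v)}\langle \mu^\eta(v), |\na T_\phi^t(v)|^2\rangle \dd v \,\bigg|\, \mathscr{F}_s\bigg].
\end{equation*}
Since $U$ satisfies the deterministic analogue $U(r) = 1 - \sigma\theta^2 \int_s^r U(v)\langle u(v), |\na T_\phi^t(v)|^2\rangle \dd v$, subtraction and the splitting $\mu^\eta = u + (\mu^\eta - u)$ yield a linear Volterra equation for $R^\eta := \rho^\eta - U$,
\begin{equation*}
  R^\eta(r) = E^\eta(r) - \sigma\theta^2 \int_s^r R^\eta(v) K(v) \dd v,
\end{equation*}
with \emph{deterministic} bounded kernel $K(v) = \langle u(v), |\na T_\phi^t(v)|^2\rangle$ and forcing term
\begin{equation*}
  E^\eta(r) = -\sigma\theta^2 \E\bigg[\int_s^r e^{\mathrm{i}\theta F_1(v)}\langle \mu^\eta(v) - u(v), |\na T_\phi^t(v)|^2\rangle \dd v \,\bigg|\, \mathscr{F}_s\bigg].
\end{equation*}

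Gronwall's inequality applied to this Volterra equation gives $|R^\eta(t)| \leq C(\sigma,\theta,T,u,\phi)\sup_{r\in[s,t]}|E^\eta(r)|$. Using $|e^{\mathrm{i}\theta F_1(v)}|=1$, the tower property, and Fubini leads to
\begin{equation*}
  \E|R^\eta(t)| \leq C\sigma\theta^2 \int_s^t \E\big|\langle\mu^\eta(r) - u(r), |\na T_\phi^t(r)|^2\rangle\big|\dd r,
\end{equation*}
which tends to zero by Remark \ref{rem.diff}, provided $|\na T_\phi^t(r)|^2$ lies in the test function class $\mathcal{W}$ uniformly in $r \in [0,T]$. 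This $\mathcal{W}$-membership follows from the higher regularity of $T_\phi^t$ in Lemma \ref{lem.estT} combined with the Schwartz assumption on $\phi$ in Assumption (A4b), which ensures $T_\phi^t \in W^{2,\infty}(\R^d)\cap L^1(\R^d)\cap L^2(\R^d)$ uniformly in $r$, so that $|\na T_\phi^t|^2$ (after normalization) lies in $\mathcal{W}$. The main obstacle is the Volterra step: one must recognize that passing from the random integrand $e^{\mathrm{i}\theta F_1(v)}$ to the deterministic kernel $K$ produces a \emph{linear} equation to which Gronwall applies, so that the remaining randomness sits entirely in the vanishing forcing $E^\eta$; a secondary point is verifying the test-function regularity of $|\na T_\phi^t|^2$ through Lemma \ref{lem.estT}, which is where Assumption (A4b) enters.
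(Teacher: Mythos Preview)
Your proof is correct and follows the same overall architecture as the paper: the decomposition \eqref{8.F123}, the elimination of $F_2+F_3$ via Lemma \ref{lem.F23}, an It\^o computation for the $F_1$ contribution, and the appeal to Remark \ref{rem.diff} at the end. The one substantive difference is in how you handle the comparison of $e^{\mathrm{i}\theta F_1}$ with the limiting Gaussian factor. The paper applies It\^o's formula directly to the \emph{compensated} process $Y_s(r)=F_1(s,r)-\mathrm{i}\theta\sigma\int_s^r\langle u(v),|\nabla T_\phi^t(v)|^2\rangle\,\dd v$, so that $e^{\mathrm{i}\theta Y_s(t)}=e^{\mathrm{i}\theta F_1+\theta^2 F_4}$ and the drift of $Y_s$ cancels the $\langle u,|\nabla T_\phi^t|^2\rangle$ part of the quadratic variation in one stroke, leaving only the $\langle\mu^\eta-u,\cdot\rangle$ remainder; no Gronwall step is needed. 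You instead apply It\^o to $e^{\mathrm{i}\theta F_1}$ alone, obtain an integral equation for $\rho^\eta$, and then compare with $U$ via a linear Volterra--Gronwall argument. Both are standard and valid; the paper's route is one step shorter, while yours is perhaps more transparent about why the deterministic kernel $K$ is what drives the comparison. Two minor remarks: your bound $\E|R^\eta(t)|\le C\E[\sup_r|E^\eta(r)|]$ works because the integrand defining $E^\eta$ is bounded in modulus by a nonnegative quantity whose time integral is monotone in $r$, so the supremum is controlled by the value at $r=t$; and the Schwartz hypothesis on $\phi$ comes from the statement of Theorem \ref{thm.fluct}, not from Assumption (A4b), which concerns $u_0$.
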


The proof of Lemma \ref{lem.fourier} follows the lines of \cite[Sec.~3]{Oel87}, but since we are working in a different setting and since we compare with the intermediate PDE solution $\bar{u}^\eta$ instead of a correction term, we present it here for completeness.

\begin{proof}
Let $\phi$ be a Schwartz function. We recall from \eqref{8.F123} that
\begin{align*}
  &\langle\mathcal{F}^\eta(t),\phi\rangle 
  - \langle\mathcal{F}^\eta(s),T_\phi^t(s)\rangle
  = (F_{1} + F_{2} + F_{3})(s,t), \quad\mbox{where } \\
  & F_{1}(s,t) = \frac{\sqrt{2\sigma}}{\sqrt{N}}\sum_{i=1}^N\int_s^t
  \na T_\phi^t(r,X_i^\eta(r))\cdot\dd W_i(r), \\
  & F_{2}(s,t) = \frac{\kappa}{\sqrt{N}}\int_s^t\langle\mathcal{F}^\eta(r),
  (\na V^\eta*\mathcal{F}^\eta(r))\cdot\na T_\phi^t(r)\rangle\dd r, \\
  & F_{3}(s,t) = \kappa\int_s^t\langle\mathcal{F}^\eta(r),
  \psi_{\eta,t,\phi}^1(r)\rangle\dd r,
\end{align*}
and $\psi^1_{\eta,t,\phi}$ is defined in \eqref{8.defpsi1}. In particular, we can write 
\begin{align}\label{8.Feta_first}
  \exp(\mathrm{i}\theta\langle\mathcal{F}^\eta(t),\phi\rangle)
  = \exp(\mathrm{i}\theta\langle\mathcal{F}^\eta(s),T_\phi^t(s)\rangle)
  e^{\mathrm{i}\theta F_{1}(s,t)}
  e^{\mathrm{i}\theta F_{2}(s,t)}
  e^{\mathrm{i}\theta F_{3}(s,t)}.
\end{align}
Furthermore, we introduce
\begin{equation}\label{8.F4}
  F_4(s,t) = \sigma\int_s^t\langle u(r),|\na T^t_\phi(r)|^2\rangle\dd r,
\end{equation}
which represents the variance factor of the limiting characteristic function. It follows from the definition of $\mathscr{F}_s$ and formulation \eqref{8.Feta_first} that
\begin{align}\label{8.G12}
  \E\big|&\E\big(\exp(\mathrm{i}\theta\langle\mathcal{F}^\eta(t),
  \phi\rangle)|\mathscr{F}_s\big)
  - \exp\big(\mathrm{i}\theta\langle\mathcal{F}^\eta(s),T_\phi^t(s)
  \rangle-\theta^2 F_{4}(s,t)\big)\big| \\
  &= \E\Big[\Big|\E\Big(\exp(\mathrm{i}\theta\langle\mathcal{F}^\eta(t),
  \phi\rangle)
  - \exp\big(\mathrm{i}\theta\langle\mathcal{F}^\eta(s),T_\phi^t(s)
  \rangle-\theta^2 F_4(s,t)\big)\Big|\mathscr{F}_s\Big)\Big| \Big] \nonumber \\
  &= \E\Big[\Big|\E\Big(\exp(\mathrm{i}\theta\langle\mathcal{F}^\eta(s),
  T_\phi^t(s)\rangle)e^{\mathrm{i}\theta F_1(s,t)}
  e^{\mathrm{i}\theta F_2(s,t)}e^{\mathrm{i}\theta F_3(s,t)} \nonumber \\
  &\phantom{xx}
  - \exp(\mathrm{i}\theta \langle\mathcal{F}^\eta(s),
  T_\phi^t(s)\rangle)e^{-\theta^2F_4(s,t)}
  \Big|\mathscr{F}_s\Big)\Big|\Big]
  \le G_1 + G_2, \nonumber 
\end{align}
where
\begin{align*}
  G_1 &= \E\Big|\E\Big[\exp(\mathrm{i}\theta\langle\mathcal{F}^\eta(s),
  T_\phi^t(s)\rangle)\big(e^{\mathrm{i}\theta F_1(s,t)} 
  - e^{-\theta^2F_4(s,t)}\big)\big|\mathscr{F}_s\Big]\Big|, \\
  G_2 &= \E\Big|\E\Big[\exp(\mathrm{i}\theta\langle\mathcal{F}^\eta(s), T_\phi^t(s)
  \rangle)e^{\mathrm{i}\theta F_1(s,t)}
  \big(e^{\mathrm{i}\theta F_2(s,t)}e^{\mathrm{i}\theta F_3(s,t)}
  - 1\big)\big|\mathscr{F}_s\Big]\Big|.
\end{align*}
The term $G_2$ converges to zero as $N \to \infty$ (or $\eta \to 0$), since, taking into account Lemma \ref{lem.F23}, the mean-value theorem, and the limit $\mathbb{E}|F_2(s,t) + F_3(s,t)| \to 0$ from Lemma \ref{lem.F23},
\begin{align*}
  G_2 &\le \E\big[\E\big(|e^{\mathrm{i}\theta F_{2}(s,t)}
  e^{\mathrm{i}\theta F_{3}(s,t)} - 1|\big|\mathscr{F}_s\big)\big]
  = \E\big[|e^{\mathrm{i}\theta( F_{2}(s,t)
  + F_{3}(s,t))} - 1|\big] \\
  &\leq \theta\E\big[|F_2(s,t) + F_3(s,t)|\big] \to 0.
\end{align*} 

For the term $G_1$, we first note that $\exp(\mathrm{i}\theta\langle\mathcal{F}^\eta(s), T_\phi^t(s)\rangle)$ is $\mathscr{F}_s$-measurable, which gives
\begin{align*}
  G_1 &= \E\big|\E\big(e^{\mathrm{i}\theta F_{1}(s,t)}
  - e^{-\theta^2F_{4}(s,t)}\big|\mathscr{F}_s\big)\big| \\
  &= \E\big|\E\big((e^{\mathrm{i}\theta F_{1}(s,t)+\theta^2F_{4}(s,t)}
  - 1)e^{-\theta^2 F_{4}(s,t)}\big|\mathscr{F}_s\big)\big|. 
\end{align*}
In the following, we rewrite the expression $e^{\mathrm{i}\theta F_{1}(s,t)+\theta^2F_{4}(s,t)}$. To this end, we apply It\^o's formula to the nonlinear function $f(x)=e^{\mathrm{i}\theta x}$ and the process
\begin{align*}
  \dd Y_s(r) = \frac{\sqrt{2\sigma}}{\sqrt{N}}\sum_{i=1}^N
  \na T_\phi^t(r,X_i^\eta(r))\cdot\dd W_i(r)
  - \mathrm{i}\sigma\theta\langle u(r),|\na T_\phi^t(r)|^2\rangle\dd r,
\end{align*}
with $Y_s(s)=0$ and $s>0$ fixed, implying that
\begin{align*}
  e^{\mathrm{i}\theta F_{1}(s,t)+\theta^2F_{4}(s,t)}
  &= f(Y_s(t)) = f(Y_s(s))
  + \sigma\theta^2\int_s^t e^{\mathrm{i}\theta Y_s(r)}
  \langle u(r),|\na T_\phi^t(r)|^2\rangle\dd r \\
  &\phantom{xx}- \frac{\sigma}{N}\theta^2\sum_{i=1}^N\int_s^t
  e^{\mathrm{i}\theta Y_s(r)}|\na T_\phi^t(r,X_i^\eta(r))|^2\dd r \\
  &\phantom{xx}+ \mathrm{i}\theta\frac{\sqrt{2\sigma}}{\sqrt{N}}
  \sum_{i=1}^N\int_s^t e^{\mathrm{i}\theta Y_s(r)}\na T_\phi^t(r,X_i^\eta(r))\cdot\dd W_i(r).
\end{align*}
It follows from $f(Y_s(s))=f(0)=1$ and the definition of $\mu^\eta$ that
\begin{align*}
  &e^{\mathrm{i}\theta F_{1}(s,t)+\theta^2F_{4}(s,t)} - 1
  = \mathrm{i}\theta\frac{\sqrt{2\sigma}}{\sqrt{N}}
  \sum_{i=1}^N\int_s^t e^{\mathrm{i}\theta Y_s(r)}
  \na T_\phi^t(r,X_i^\eta(r))\cdot\dd W_i(r) \\
  &\phantom{xxxx}+ \sigma\theta^2\int_s^te^{\mathrm{i}\theta Y_s(r)}
  \bigg(\langle u(r),|\na T_\phi^t(r)|^2\rangle
 - \frac{1}{N}\sum_{i=1}^N|\na T_\phi^t(r,X_i^\eta(r))|^2\bigg)\dd r \\
  &\phantom{xx}= \mathrm{i}\theta\frac{\sqrt{2\sigma}}{\sqrt{N}}
  \sum_{i=1}^N\int_s^t e^{\mathrm{i}\theta Y_s(r)}
  \na T_\phi^t(r,X_i^\eta(r))\cdot\dd W_i(r) \\
  &\phantom{xxxx}- \sigma\theta^2\int_s^te^{\mathrm{i}\theta Y_s(r)}
  \langle\mu^\eta(r)-u(r),|\na T_\phi^t(r,X_i^\eta(r))|^2\rangle\dd r.
\end{align*}
Since
\begin{align*}
  |e^{i\theta Y_s(\omega,r)}|
  &\leq \bigg|\exp\bigg(\mathrm{i}\theta \int_{s}^{r}\frac{\sqrt{2\sigma}}{\sqrt{N}} 
  \sum_{i = 1}^N  \na T^{t}_{\phi}(\tau,X^\eta_i(\tau)) 
  \cdot\dd W_i(\tau)\bigg)\bigg| \\ 
  &\phantom{xx}\times
  \exp\bigg(\theta^2 \sigma \int_{s}^r \langle u(\tau), 
  |\nabla T_\phi^t(\tau,\cdot)|^2\rangle \dd \tau\bigg) \\
  &\leq \exp\bigg(\theta^2\sigma\int_{0}^t 
  \|\nabla T_\phi^t(r) \|_{L^\infty(\R^d)} \dd r\bigg),
\end{align*}
and trivially $\|e^{-\theta^2 F_{4}(s,t)}\|_{L^\infty(\R^d)}\leq 1$, 
we arrive eventually at
\begin{align*}
  \E\big(&\big|\E\big(\big[e^{\mathrm{i}\theta F_1(s,t)
  +\theta^2 F_{4}(s,t)}-1\big]e^{-\theta^2 F_{4}(s,t)}
  \big| \mathscr{F}_s \big)\big|\big) \\
  &\le \sigma\theta^2 \exp\bigg(\sigma\theta^2\int_{0}^t 
  \|\nabla T_\phi^t(\tau) \|_{L^\infty(\R^d)} \dd \tau
  \bigg) \E\bigg(\int_{s}^t\big|\langle\mu^{\eta}(r) - u(r), 
  |\nabla T_{\phi}^t(r)|^2\rangle\big|\dd r\bigg),
\end{align*}
where we have used the martingale property of It\^o integrals to conclude that 
$$
  \mathbb{E}\bigg(\mathrm{i}\theta\frac{\sqrt{2\sigma}}{\sqrt{N}}
  \sum_{i=1}^N\int_s^t e^{\mathrm{i}\theta Y_s(r)} \na T_\phi^t(r,X_i^\eta(r))\cdot\dd W_i(r)| \mathscr{F}_s\bigg) = 0.
$$ 
We deduce from limit \eqref{4.remdiff} in Remark \ref{rem.diff} that
$$
  \mathbb{E} \big(\big| \big\langle\mu^\eta(r)-u(r),
  |\na T_\phi^t(r)|^2\big\rangle \big|\big) \to 0,
$$
and consequently,
\begin{align*}
  G_1 &\le \E\big(\big|\E\big(\big[e^{\mathrm{i}\theta F_1(s,t)
  +\theta^2 F_{4}(s,t)}-1\big]e^{-\theta^2 F_{4}(s,t)}
  \big| \mathscr{F}_s \big)\big|\big) \\
  &\le \sigma\theta^2\E\bigg(\int_s^t
  \big|\big\langle\mu^\eta(r)-u(r),
  |\na T_\phi^t(r)|^2\big\rangle \big|\dd r\bigg)\to 0.
\end{align*}
We have shown that the left-hand side of \eqref{8.G12} converges to zero,
$$
  \E\big|\E\big(\exp(\mathrm{i}\theta\langle\mathcal{F}^\eta(t),
  \phi\rangle)|\mathscr{F}_s\big)
  - \exp\big(\mathrm{i}\theta\langle\mathcal{F}^\eta(s),T_\phi^t(s)
  \rangle-\theta^2 F_{4}(s,t)\big)\big|\to 0,
$$
which, by definition \eqref{8.F4}, finishes the proof.
\end{proof}

\begin{remark}[Explanation of Lemma \ref{lem.fourier}]\rm 
By the notion of the regular conditional law, it holds asymptotically for $s=0$ that, for $N\to\infty$ (or $\eta\to 0$),
\begin{align*}
  \mathscr{L}(\langle\mathcal{F}^\eta(t), \phi \rangle | \mathscr{F}_{0}) 
  \to \mathcal{N}\bigg( \langle \mathcal{F}_0, 
  T_\phi^t(0,\cdot)\rangle, 2\sigma\int_{0}^{t} \langle u(\tau), 
  |\nabla T_\phi^t(\tau,\cdot)|^2\rangle \dd \tau\bigg).
\end{align*}
As we start with i.i.d.\ initial conditions distributed with density function $u_0$, $\mathcal{F}_0$ is a Gaussian field with zero mean and variance
\begin{align*}
  \mathbb{E}(\langle \mathcal{F}_0, \phi \rangle ^2)
  = \int_{\R^d} \phi(x)^2u_0(x)\dd x 
  - \bigg(\int_{\R^d} \phi(x) u_0(x)\dd x\bigg)^2.
\end{align*}
This implies that our asymptotic limiting process of the intermediate fluctuations $\langle\mathcal{F}_t, \phi \rangle $ is Gaussian for any $t>0$ and any Schwartz function $\phi$, since asymptotically,
\begin{align*}
  \E\big(e^{\mathrm{i}\theta \langle \mathcal{F}_t, 
  \phi \rangle}\big) 
  &= \E\big(\mathbb{E}( e^{\mathrm{i}\theta
  \langle \mathcal{F}_t, \phi \rangle}| \mathscr{F}_0)\big) \\
  &= \E\bigg(\exp\bigg(\mathrm{i}\theta
  \langle \mathcal{F}_0, T_\phi^t(0,\cdot)\rangle 
  - \sigma\theta^2 \int_{0}^{t} 
  \langle u(\tau), |\nabla T_\phi^t(\tau,\cdot)|^2\rangle 
  \dd \tau\bigg)\bigg) \\
  &= \exp\bigg(-\sigma\theta^2\int_{0}^{t} 
  \langle u(\tau), |\nabla T_\phi^t(\tau,\cdot)|^2\rangle 
  \dd \tau\bigg)\E\big(e^{\mathrm{i}\theta\langle \mathcal{F}_0, 
  T_\phi^t(0,\cdot)\rangle}\big).
\end{align*}
Since $\langle \mathcal{F}_0, \psi \rangle$ is Gaussian for any $\psi$, this shows that $\langle\mathcal{F}_t, \phi \rangle$ is also Gaussian. In particular, we have
\begin{align*}
  \langle\mathcal{F}_t, \phi \rangle 
  \sim \mathcal{N}\bigg(0,\langle u_0,(T^t_\phi (0))^2 \rangle 
  - \langle u_0, T^t_\phi (0)\rangle^2 
  + 2\sigma\int_{0}^t \langle u(s), |\nabla T_\phi (s)|^2\rangle 
  \dd s\bigg).
\end{align*}
\qed\end{remark}

\begin{remark}[Interpretation of an Ornstein--Uhlenbeck process]\rm
By the characterisation of an Ornstein--Uhlenbeck process $\mathcal{F}_t$ such that the process $\langle \mathcal{F}_t, \phi \rangle$ conditional to $\mathscr{F}_s$ is normal with mean $\langle \mathcal{F}_s, T_\phi^t(s)\rangle$ and variance 
$$
  \int_{s}^t \big\|(2\sigma u(\tau)\nabla T_\phi^t(\tau))^{1/2}
  \big|_{L^2(\R^d)}^2 \dd \tau
$$ 
(see, e.g., \cite{FGN13}), the limiting process $\mathcal{F}_t$ can be formally interpreted as an Ornstein--Uhlenbeck process. This process is a formal solution to the following linear stochastic PDE:
\begin{align*}
  \partial_t\mathcal{F} = \sigma\Delta\mathcal{F} 
  - \kappa\diver(\mathcal{F}\na\Phi*u+u\na\Phi*\mathcal{F})
  - \sqrt{2\sigma}\nabla \cdot(\sqrt{u}\xi)
\end{align*} 
in $\R^d\times(0,T)$ with the initial datum $\mathcal{F}(0)=\mathcal{F}_0$, where $\xi\in L^2((0,T)\times\R^d;\R^d)\to L^2(\Omega;\R^d)$ is a vector-valued space-time white noise and $u$ solves \eqref{1.u}. Deriving a rigorous justification of this formal argument is open and left for future research.
\qed\end{remark}

\appendix

\section{Estimates for the Riesz kernel}\label{sec.riesz}

We show some estimates related to the Riesz kernel $\Phi(x)=|x|^{-\lambda}$ for $x\in\R^d$ with $\lambda>0$. Some results are stated for the more general setting $0<\lambda < d$ although the results of this paper are concerned with sub-Coulomb potentials. Recall that $\Phi=\Psi*\Psi$, $\chi^\eta=\xi^\eta*\xi^\eta$ and accordingly $V^\eta=\chi^\eta*\Phi$, $Z^\eta=\xi^\eta*\Psi$.

\begin{lemma}\label{lem.Psi}
Let $0<\lambda<d$ and let $\Phi=\Psi*\Psi$ for some function $\Psi$. Then $\Psi(x)=c_{\lambda, d}|x|^{-(\lambda+d)/2}$ for some $c_{\lambda,d}>0$.
\end{lemma}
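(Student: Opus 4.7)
The plan is to use Fourier analysis on tempered distributions, exploiting the classical fact that the Fourier transform turns the Riesz kernel $|x|^{-\alpha}$ (for $0<\alpha<d$) into another Riesz kernel, and turns convolution into pointwise multiplication. Concretely, there is a well-known constant $\gamma_d(\alpha)>0$ (expressible via Gamma functions) such that, in the sense of tempered distributions on $\R^d$,
\[
  \widehat{|\cdot|^{-\alpha}}(\xi) = \gamma_d(\alpha)\,|\xi|^{-(d-\alpha)}, \qquad 0<\alpha<d.
\]
Applying this identity to $\Phi(x)=|x|^{-\lambda}$, which is a tempered distribution since $0<\lambda<d$, we obtain $\widehat{\Phi}(\xi) = \gamma_d(\lambda)\,|\xi|^{-(d-\lambda)}$.

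Next, the assumption $\Phi = \Psi\ast\Psi$ (interpreted distributionally, with $\Psi$ a tempered distribution whose square of the Fourier transform makes sense) translates via the convolution theorem into
\[
  \gamma_d(\lambda)\,|\xi|^{-(d-\lambda)} = \widehat{\Phi}(\xi) = \widehat{\Psi}(\xi)^2.
\]
Choosing the positive square root (which is natural because one expects $\Psi$ to be radial, real-valued and, upon taking the inverse Fourier transform, positive, so that $\widehat{\Psi}$ is positive), we get
\[
  \widehat{\Psi}(\xi) = \sqrt{\gamma_d(\lambda)}\;|\xi|^{-(d-\lambda)/2}.
\]
Since $0<\lambda<d$ implies $0<(d-\lambda)/2<d/2<d$, the right-hand side is the Fourier transform of another Riesz kernel. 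Solving $d-\alpha=(d-\lambda)/2$ gives $\alpha=(d+\lambda)/2\in(0,d)$, so inverting the Fourier transform yields
\[
  \Psi(x) \;=\; \frac{\sqrt{\gamma_d(\lambda)}}{\gamma_d\!\bigl((d+\lambda)/2\bigr)}\,|x|^{-(d+\lambda)/2} \;=:\; c_{\lambda,d}\,|x|^{-(d+\lambda)/2},
\]
with $c_{\lambda,d}>0$, which is the desired formula.

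The main technical point to handle carefully, and the only real obstacle, is the justification of the Fourier transform identity for Riesz kernels outside the $L^1\cup L^2$ regime: $\Phi=|x|^{-\lambda}$ is not integrable, so $\widehat{\Phi}$ must be understood in $\mathcal{S}'(\R^d)$. This is the standard result on Riesz potentials (see, e.g., Stein, \emph{Singular Integrals and Differentiability Properties of Functions}, Chap.~V), proved by pairing against Schwartz functions and using homogeneity and rotation invariance, which forces $\widehat{\Phi}$ to be a positive radial homogeneous distribution of degree $-(d-\lambda)$, hence a multiple of $|\xi|^{-(d-\lambda)}$. The constant $c_{\lambda,d}$ can either be left implicit or computed explicitly from Gamma-function identities; for our purposes any positive value suffices, since the constant plays no role in the subsequent estimates of the paper.
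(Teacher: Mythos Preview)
Your proof is correct and follows essentially the same approach as the paper: take the Fourier transform of the Riesz kernel $\Phi(x)=|x|^{-\lambda}$, use the convolution theorem to write $\widehat{\Psi}^2=\widehat{\Phi}$, take the positive square root, and invert. Your version is slightly more detailed (you spell out the tempered-distribution setting and the exponent bookkeeping), but the argument is the same.
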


\begin{proof}
We use the fact that the Fourier transform of the Riesz kernel $|x|^{-\lambda}$ for $0<\lambda<d$ equals $F[|x|^{-\lambda}](z) = C_{\lambda,d}|z|^{\lambda-d}$ for some $C_{\lambda,d}>0$. Then $F[\Psi]^2 = F[\Psi*\Psi]=F[\Phi]$ implies that
$F[\Psi](z) = C_{\lambda,d}^{1/2}|z|^{(\lambda-d)/2}$.  Transforming back yields $\Psi(x)=c_{\lambda,d}|x|^{-(\lambda+d)/2}$ for some $c_{\lambda,d}>0$. 
\end{proof}

We prove some estimates for $V^\eta$ and $Z^\eta$ and their derivatives.

\begin{lemma}\label{lem.DkV}
Let $0<\lambda<d$ and $k\in\N\cup\{0\}$. Then there exists $C>0$ such that for any $\eta=N^{-\beta}$,
$$
  \|\mathrm{D}^k V^\eta\|_{L^\infty(\R^d)}
  \le CN^{\beta(\lambda+k)}.
$$
\end{lemma}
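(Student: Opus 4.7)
The plan is to reduce the estimate to a single scale-invariant bound by exploiting the homogeneity of the Riesz kernel. First, I would record the scaling identity
\begin{equation*}
  V^\eta(x) \;=\; \eta^{-\lambda}\, V^1(x/\eta), \qquad x\in\R^d,
\end{equation*}
where $V^1 = \chi*\Phi$ corresponds to $\eta=1$. This follows by writing $V^\eta(x) = \int\chi^\eta(x-y)|y|^{-\lambda}\dd y$ and applying the change of variables $y=\eta z$, using that $\chi^\eta(x)=\eta^{-d}\chi(x/\eta)$ and that $\Phi$ is homogeneous of degree $-\lambda$. Differentiating $k$ times gives
\begin{equation*}
  \mathrm{D}^k V^\eta(x) \;=\; \eta^{-\lambda-k}\,(\mathrm{D}^k V^1)(x/\eta),
\end{equation*}
so that $\|\mathrm{D}^k V^\eta\|_{L^\infty(\R^d)} = \eta^{-\lambda-k}\|\mathrm{D}^k V^1\|_{L^\infty(\R^d)}$. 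Since $\eta=N^{-\beta}$, the desired bound follows immediately provided $\|\mathrm{D}^k V^1\|_{L^\infty(\R^d)}<\infty$.

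The remaining task is therefore to verify that $V^1 = \chi*\Phi$ has bounded derivatives of every order. Since $\xi\in C_0^\infty(\R^d)$, the convolution $\chi=\xi*\xi$ also belongs to $C_0^\infty(\R^d)$, and so all derivatives $\mathrm{D}^k\chi$ are bounded with compact support. Moving the derivative onto $\chi$, I write
\begin{equation*}
  \mathrm{D}^k V^1(x) \;=\; \int_{\R^d}\mathrm{D}^k\chi(x-y)\,|y|^{-\lambda}\dd y.
\end{equation*}
To bound this uniformly in $x$, I would split the integration domain into $B_1(0)$ and its complement. On $B_1(0)$, the bound $|\mathrm{D}^k\chi(x-y)|\le\|\mathrm{D}^k\chi\|_{L^\infty(\R^d)}$ combined with the local integrability $\int_{B_1(0)}|y|^{-\lambda}\dd y<\infty$ (which uses the assumption $\lambda<d$) gives a finite contribution. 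On $B_1(0)^c$, the inequality $|y|^{-\lambda}\le 1$ reduces the estimate to $\|\mathrm{D}^k\chi\|_{L^1(\R^d)}<\infty$. Both bounds are independent of $x$, yielding a constant $C=C(k,\lambda,d)$ such that $\|\mathrm{D}^k V^1\|_{L^\infty(\R^d)}\le C$.

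Combining the two steps, $\|\mathrm{D}^k V^\eta\|_{L^\infty(\R^d)}\le C\eta^{-(\lambda+k)} = CN^{\beta(\lambda+k)}$, as claimed. There is no genuine obstacle here: the only place where the condition $\lambda<d$ is used is the local integrability of $|y|^{-\lambda}$ near the origin, and the assumption $\lambda<d-2$ from the main body of the paper is not needed for this lemma.
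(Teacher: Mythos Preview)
Your proof is correct and rests on the same splitting argument as the paper: put the derivative on $\chi$, split the $y$-integral at a ball around the origin, and combine the local integrability of $|y|^{-\lambda}$ (this is where $\lambda<d$ enters) with the $L^1$ and $L^\infty$ bounds for $\mathrm{D}^k\chi$. The only difference is organizational: the paper works directly with $\chi^\eta$, chooses the splitting radius $\rho=N^{-\beta}=\eta$, and tracks the $\eta$-dependence through the explicit norms $\|\mathrm{D}^k\chi^\eta\|_{L^\infty}\le CN^{\beta(d+k)}$, $\|\mathrm{D}^k\chi^\eta\|_{L^1}\le CN^{\beta k}$, $\|\Phi\|_{L^1(B_\eta(0))}\le CN^{\beta(\lambda-d)}$, $\|\Phi\|_{L^\infty(B_\eta(0)^c)}\le CN^{\beta\lambda}$, arriving at the same exponent $\beta(\lambda+k)$ after cancellation. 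You instead exploit the homogeneity of $\Phi$ to derive the scaling identity $V^\eta(x)=\eta^{-\lambda}V^1(x/\eta)$ upfront, which reduces the problem to the single finite constant $\|\mathrm{D}^k V^1\|_{L^\infty}$ and makes the exponent transparent from the start. Your version is a bit cleaner and separates the scaling from the analytic content, but the two are equivalent in substance.
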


\begin{proof}
We have by definition $V^\eta=\chi^\eta*\Phi$ and
\begin{align*}
  |\mathrm{D}^k V^\eta(x)| &= \bigg|\int_{\R^d}\mathrm{D}^k
  \chi^\eta(x-y)\Phi(y)\mathrm{1}_{B_\rho(0)}(y)\dd y
  + \int_{\R^d}\mathrm{D}^k\chi^\eta(x-y)\Phi(y)
  \mathrm{1}_{B_\rho(0)^c}(y)\dd y\bigg| \\
  &\le \|\mathrm{D}^k\chi^\eta\|_{L^\infty(\R^d)}
  \|\Phi\|_{L^1(B_{\rho}(0))} 
  + \|\mathrm{D}^{k}\chi^\eta\|_{L^1(\R^d)}
  \|\Phi\|_{L^\infty(B_{\rho}(0)^c)},
\end{align*}
where $B_\rho(0)$ denotes the ball around the origin with radius $\rho:=N^{-\beta}$. Using $\|\mathrm{D}^k\chi^\eta\|_{L^\infty(\R^d)}$ $\le CN^{\beta(d+k)}$ and $\|\mathrm{D}^k\chi^\eta\|_{L^1(\R^d)}\le CN^{\beta k}$ as well as
\begin{align*}
  \|\Phi\|_{L^1(B_{\rho}(0))} \le C\int_0^{N^{-\beta}}
  r^{-\lambda}r^{d-1}\dd r \le CN^{\beta(\lambda-d)}, \quad
  \|\Phi\|_{L^\infty(B_{\rho}(0)^c)}
  \le CN^{\beta\lambda}, 
\end{align*}
we find that $|\mathrm{D}^k V^\eta(x)| \le CN^{\beta(\lambda+k)}$. Note that the condition $\lambda<d$ is used to estimate the $L^1$ norm of $\Phi$ around the origin. 
\end{proof}

\begin{lemma}\label{lem.DkZ} 
There exists $C>0$ such that for any $\eta=N^{-\beta}$,
\begin{align*}
  \|Z^\eta\|_{L^2(\R^d)} \le CN^{\beta \lambda/2}, \quad
  \|\na Z^\eta\|_{L^2(\R^d)} \le CN^{\beta(\lambda+1)/2} 
  &\quad\mbox{if }0<\lambda<d-2.
\end{align*}
\end{lemma}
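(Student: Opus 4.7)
The plan is to mirror the proof of Lemma \ref{lem.DkV}, using the explicit form $\Psi(x) = c_{d,\lambda}|x|^{-(\lambda+d)/2}$ established in Lemma \ref{lem.Psi}, and splitting the convolution $Z^\eta = \xi^\eta * \Psi$ (respectively $\nabla Z^\eta = \xi^\eta * \nabla\Psi$) at the mollification scale $\rho = \eta$, applying Young's inequality in two dual ways to the two pieces.

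For $\|Z^\eta\|_{L^2(\R^d)}$, I would decompose $\Psi = \Psi\mathbf{1}_{B_\eta(0)} + \Psi\mathbf{1}_{B_\eta(0)^c}$. The inner piece is controlled by Young's inequality with exponents $(p,q,r) = (2,1,2)$, giving
\[
  \|\xi^\eta * (\Psi\mathbf{1}_{B_\eta})\|_{L^2(\R^d)}
  \le \|\xi^\eta\|_{L^2(\R^d)}\|\Psi\|_{L^1(B_\eta)}
  \le C\eta^{-d/2} \cdot \eta^{(d-\lambda)/2} = C\eta^{-\lambda/2},
\]
where $\|\Psi\|_{L^1(B_\eta)} \le C\eta^{(d-\lambda)/2}$ is finite at the origin precisely because $\lambda < d$. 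The outer piece is controlled by Young's inequality with exponents $(1,2,2)$ together with $\|\xi^\eta\|_{L^1(\R^d)} = 1$, reducing to the bound $\|\Psi\|_{L^2(B_\eta^c)} \le C\eta^{-\lambda/2}$, whose convergence at infinity is ensured by $\lambda > 0$. The two contributions balance at $C\eta^{-\lambda/2} = CN^{\beta\lambda/2}$, yielding the first bound.

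For $\|\nabla Z^\eta\|_{L^2(\R^d)}$, I would push the derivative onto the singular factor, writing $\nabla Z^\eta = \xi^\eta * \nabla\Psi$ with the pointwise estimate $|\nabla\Psi(x)| \le C|x|^{-(\lambda+d)/2-1}$, and repeat the splitting at $\rho = \eta$. The key new feature is that $\|\nabla\Psi\|_{L^1(B_\eta)}$ reduces to $C\int_0^\eta r^{(d-\lambda)/2 - 2}\,dr$, which converges at the origin \emph{only} if $\lambda < d-2$; under this condition it equals $C\eta^{(d-\lambda)/2 - 1}$. The analogous $L^2$-tail bound $\|\nabla\Psi\|_{L^2(B_\eta^c)} \le C\eta^{-(\lambda+2)/2}$ is finite at infinity whenever $\lambda > -2$. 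Both pieces of the splitting then give the stated algebraic growth rate in $N$.

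The main obstacle is precisely this local integrability restriction: beyond the sub-Coulomb threshold $\lambda = d-2$, $\nabla\Psi$ fails to be $L^1$ near the origin, so no cutoff-free bound on $\|\nabla Z^\eta\|_{L^2(\R^d)}$ is available. Introducing a cutoff as in \cite{LaPi17} would destroy the clean convolution-square structure $V^\eta = Z^\eta * Z^\eta$ that is used throughout the paper (compare Remark \ref{rem.subcoulomb}). An alternative route that bypasses the splitting is to exploit the symmetry of $Z^\eta$, which yields the pointwise identities $\|Z^\eta\|_{L^2(\R^d)}^2 = (Z^\eta * Z^\eta)(0) = V^\eta(0)$ and $\|\nabla Z^\eta\|_{L^2(\R^d)}^2 = -\Delta V^\eta(0)$, and then to apply Lemma \ref{lem.DkV} at $k = 0$ and $k = 2$ respectively; this route gives the same scaling and hits the same sub-Coulomb obstruction in controlling $|\Delta V^\eta(0)|$.
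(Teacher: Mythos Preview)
Your approach is sound and genuinely different from the paper's: the paper argues by duality, pairing $Z^\eta$ (respectively $\nabla Z^\eta$) against a test function $g\in L^2(\R^d)$ and applying the Hardy--Littlewood--Sobolev inequality, rather than decomposing $\Psi$ at scale $\eta$ and using Young's inequality. Your route is more elementary and parallels the proof of Lemma~\ref{lem.DkV} nicely; the paper's is shorter once HLS is available. For the first bound both methods give exactly $CN^{\beta\lambda/2}$.

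Two points on the gradient bound. First, your pieces do \emph{not} reproduce the stated exponent: multiplying $\|\xi^\eta\|_{L^2}\le C\eta^{-d/2}$ by $\|\nabla\Psi\|_{L^1(B_\eta)}\le C\eta^{(d-\lambda)/2-1}$ gives $C\eta^{-(\lambda+2)/2}$, matching your outer piece $\|\nabla\Psi\|_{L^2(B_\eta^c)}\le C\eta^{-(\lambda+2)/2}$, so your argument actually yields $\|\nabla Z^\eta\|_{L^2}\le CN^{\beta(\lambda+2)/2}$ rather than $N^{\beta(\lambda+1)/2}$. This is in fact sharp (observe $Z^\eta(x)=\eta^{-(\lambda+d)/2}(\xi*\Psi)(x/\eta)$ by homogeneity of $\Psi$), and the paper's HLS calculation, carried out with the correct kernel exponent $(\lambda+d+2)/2$ for $|\nabla\Psi|$, also produces $N^{\beta(\lambda+2)/2}$; the exponent $(\lambda+1+d)/2d$ written there appears to be a slip. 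Second, your alternative identity $\|\nabla Z^\eta\|_{L^2}^2=-\Delta V^\eta(0)$ combined with Lemma~\ref{lem.DkV} at $k=2$ does \emph{not} require $\lambda<d-2$: Lemma~\ref{lem.DkV} holds for all $0<\lambda<d$. So that route actually sidesteps the sub-Coulomb restriction that both your splitting (via local integrability of $\nabla\Psi$) and the paper's HLS (via the constraint $(\lambda+d+2)/2<d$) genuinely need.
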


\begin{proof}
We infer from $Z^\eta=\xi^\eta*\Psi$, recalling that $\Psi = c_{\lambda,d}|x|^{-(\lambda +d)/2}$, and the Hardy--Littlewood--Sobolev inequality \cite[Theorem 4.3]{LiLo01} that for any $g\in L^2(\R^d)$,
\begin{align*}
  \langle Z^\eta, g\rangle_{L^2(\R^d)}
  = c_{\lambda,d}\int_{\R^d}\int_{\R^d} \frac{\xi^\eta(x)g(y)}{|x-y|^{(\lambda+d)/2}}\dd x\dd y 
  \leq C(d,\lambda)\|\xi^\eta\|_{L^p(\R^d)} \|g\|_{L^2(\R^d)},
\end{align*}
where $1/p +(\lambda +d)/2d + 1/2 = 2$, which is equivalent to $p= 2d/(2d-\lambda) >1$. The $L^p(\R^d)$ norm of $\xi^\eta$ can be estimated as
\begin{align*}
  \|\xi^\eta\|_{L^p(\R^d)} = N^{\beta d(p-1)/p} \|\xi\|_{L^p(\R^d)} 
  \leq CN^{\beta d(p-1)/p}.
\end{align*}
Then $(p-1)/p = \lambda/2d$ yields $\|Z^\eta\|_{L^2(\R^d)} \leq CN^{\beta\lambda/2}$. 
For the last bound, we estimate similarly as before:
\begin{align*}
  \|\na Z^\eta\|_{L^2(\R^d)} 
  &\le C(\lambda, d)\|\xi^\eta\|_{L^q(\R^d)},
\end{align*}
where $1/q + (\lambda+1+d)/2d +1/2 =2$, which is equivalent to $1/q = (2d-\lambda -1)/2d$. Hence, since $(q-1)/q = (\lambda +1)/2d$, 
\begin{align*}
 \|\na Z^\eta\|_{L^2(\R^d)} 
  &\le C(\lambda, d)N^{\beta(\lambda+1)/2},
\end{align*}
which finishes the proof.
\end{proof}

\begin{lemma}\label{lem.DkVbaru}
Let $k\in\N\cup\{0\}$ and $\bar{u}^\eta$ be the solution to \eqref{1.baru} (see Appendix \ref{sec.pde}). Then there exist constants $C_1$, $C_2>0$ independent of $\eta$ such that
\begin{align*}
  \sup_{0<t<T}\|\mathrm{D}^k V^\eta*\bar{u}^\eta\|_{L^\infty(\R^d)}
  \le C_1\|\bar{u}^\eta\|_{L^\infty(0,T;L^1(\R^d)\cap L^\infty(\R^d))} &\quad\mbox{if }0<\lambda<d, \\ 
  \sup_{0<t<T}\||\mathrm{D}^2 V^\eta|*\bar{u}^\eta\|_{L^\infty(\R^d)}
  \le C_2\|\bar{u}^\eta\|_{L^\infty(0,T;L^1(\R^d)\cap L^\infty(\R^d))} &\quad\mbox{if }0<\lambda<d-2.
\end{align*}
\end{lemma}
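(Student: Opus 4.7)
The plan is to bound both quantities pointwise via a three-scale decomposition of the convolution $\mathrm{D}^k V^\eta \ast \bar{u}^\eta(x) = \int_{\R^d} \mathrm{D}^k V^\eta(x-y)\bar{u}^\eta(y)\dd y$, splitting the domain of integration according to whether $|x-y|$ is below $c\eta$ (where $c>0$ is twice the support radius of $\chi$, so that $\chi^\eta$ is supported in $B_{c\eta/2}(0)$), between $c\eta$ and $1$, or above $1$. This mirrors the structure of the proof of Lemma \ref{lem.DkV} and localizes both the mollified peak of $V^\eta$ near $0$ and the Riesz-type decay at infinity.

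On the innermost region $\{|x-y| \le c\eta\}$ I would invoke Lemma \ref{lem.DkV} to get $\|\mathrm{D}^k V^\eta\|_{L^\infty(\R^d)} \le CN^{\beta(\lambda+k)} = C\eta^{-(\lambda+k)}$, which multiplied by the ball volume $C\eta^d$ and by $\|\bar{u}^\eta\|_{L^\infty(\R^d)}$ yields a contribution of order $\eta^{d-\lambda-k}\|\bar{u}^\eta\|_{L^\infty(\R^d)}$, uniformly bounded as soon as $\lambda + k \le d$. On the intermediate and outer regions the key input is the uniform-in-$\eta$ pointwise decay
\begin{equation*}
  |\mathrm{D}^k V^\eta(z)| \le C|z|^{-\lambda-k} \quad \text{for all } |z| > c\eta.
\end{equation*}
Using this, the intermediate region contributes $C\|\bar{u}^\eta\|_{L^\infty(\R^d)}\int_{c\eta < |z| \le 1} |z|^{-\lambda-k}\dd z$, bounded independently of $\eta$ whenever $\lambda + k < d$, while the outer region contributes $C\|\bar{u}^\eta\|_{L^1(\R^d)}$ since $|z|^{-\lambda-k} \le 1$ there. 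Summing the three pieces and taking the supremum in $t$ gives the first estimate (with a constant $C_1$ depending on $k$, $d$, $\lambda$, under the compatibility $\lambda+k<d$).

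For the second estimate the very same splitting applies term by term, with $|\mathrm{D}^2 V^\eta|$ in place of $\mathrm{D}^2 V^\eta$: since the absolute value is imposed pointwise before integration, the $L^\infty$ bound from Lemma \ref{lem.DkV} and the pointwise decay estimate above are inherited by $|\mathrm{D}^2 V^\eta|$ without change. Crucially, we cannot exploit the convolution structure of $V^\eta = \chi^\eta\ast\Phi$ to shift derivatives onto $\bar{u}^\eta$ here, because the absolute value breaks that identity; hence the integrability condition $\lambda + 2 < d$ on the intermediate region becomes \emph{necessary} and the restriction to the sub-Coulomb regime $\lambda < d-2$ stated in the lemma is sharp for this argument.

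The main (and essentially the only) obstacle is the uniform decay $|\mathrm{D}^k V^\eta(z)| \le C|z|^{-\lambda-k}$ for $|z| \ge c\eta$. Putting all derivatives onto $\chi^\eta$ would produce non-uniform constants $\eta^{-k}$; instead one must keep $\chi^\eta$ undifferentiated and place all derivatives on $\Phi$, writing
\begin{equation*}
  \mathrm{D}^k V^\eta(z) = \int_{\R^d}\chi^\eta(y)\,\mathrm{D}^k\Phi(z-y)\dd y, \quad |z| > c\eta,
\end{equation*}
which is legitimate pointwise since $\Phi$ is $C^\infty$ away from the origin. The support condition on $\chi^\eta$ forces $|z-y| \ge |z|/2$ in the integrand, so $|\mathrm{D}^k\Phi(z-y)| \le C|z|^{-\lambda-k}$, and $\|\chi^\eta\|_{L^1(\R^d)} = 1$ closes the estimate. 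Once this pointwise decay is in hand the rest of the proof is an elementary triple integration matched against $\|\bar{u}^\eta\|_{L^\infty(0,T;L^1(\R^d)\cap L^\infty(\R^d))}$.
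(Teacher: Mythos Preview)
Your argument is correct for the ranges you claim, but it takes a genuinely different route from the paper for the first estimate, and the two approaches trade off against each other.

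For the first estimate the paper does \emph{not} decompose $V^\eta$ or use any pointwise decay of $\mathrm{D}^k V^\eta$. Instead it shifts all $k$ derivatives onto $\bar{u}^\eta$, writing $\mathrm{D}^k V^\eta*\bar{u}^\eta=\chi^\eta*\Phi*\mathrm{D}^k\bar{u}^\eta$, and then splits the undifferentiated kernel $\Phi$ into $B_1(0)$ and $B_1(0)^c$. This works for \emph{every} $k$ under the single hypothesis $\lambda<d$ (needed only for $\Phi\in L^1(B_1(0))$), but the bound one actually obtains is in terms of $\|\mathrm{D}^k\bar{u}^\eta\|_{L^1\cap L^\infty}$, not $\|\bar{u}^\eta\|_{L^1\cap L^\infty}$ as the statement literally reads. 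Your three-scale decomposition keeps the derivatives on $V^\eta$ and exploits the pointwise tail bound $|\mathrm{D}^k V^\eta(z)|\le C|z|^{-\lambda-k}$ for $|z|\ge c\eta$; this gives a bound genuinely controlled by $\|\bar{u}^\eta\|_{L^1\cap L^\infty}$ alone (so in that sense more faithful to the stated conclusion), but only under the stronger compatibility $\lambda+k<d$ that you flag. Since the paper only ever invokes the lemma for $k\le 2$ in the sub-Coulomb regime $\lambda<d-2$, your restriction is harmless for all applications, but as written it does not cover the full range ``$0<\lambda<d$ for all $k$'' asserted in the lemma.

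For the second estimate the two proofs are essentially the same idea executed slightly differently: because the modulus sits inside the convolution, neither you nor the paper can move derivatives onto $\bar{u}^\eta$, and both arguments reduce to the local integrability of $|z|^{-\lambda-2}$ against $\|\bar{u}^\eta\|_{L^\infty}$, which forces $\lambda<d-2$. The paper splits $\mathrm{D}^2\Phi$ on $B_1(0)$ versus $B_1(0)^c$ and uses $\|\chi^\eta\|_{L^1}=1$ via Young; your version with the explicit decay of $|\mathrm{D}^2 V^\eta|$ away from the mollification scale is an equally valid packaging of the same computation.
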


\begin{proof}
By definition of $V^\eta=\chi^\eta*\Phi$, for any $x\in\R^d$,
\begin{align*}
  |(\mathrm{D}^k &V^\eta*\bar{u}^\eta)(x)|
  = |(\chi^\eta*\Phi*\mathrm{D}^{k}\bar{u}^\eta)(x)| \\
  &\le \int_{\R^d}\int_{\R^d}\chi^\eta(z-y)\Phi(z)
  \big(\mathrm{1}_{B_1(0)}+\mathrm{1}_{B^1(0)^c}\big)(y)
  |\mathrm{D}^{k}\bar{u}^\eta(x-y)|\dd y\dd z \\
  &\le \|\chi^\eta\|_{L^1(\R^d)}\big(\|\Phi  \|_{L^1(B_1(0))}
  \|\mathrm{D}^{k}\bar{u}^\eta\|_{L^\infty(\R^d)}
  + \|\Phi\|_{L^\infty(B_1(0)^c)}
  \|\mathrm{D}^{k}\bar{u}^\eta\|_{L^1(\R^d)}\big) \\
  &= \|\Phi\|_{L^1(B_1(0))}
  \|\mathrm{D}^{k}\bar{u}^\eta\|_{L^\infty(\R^d)}
  + \|\Phi\|_{L^\infty(B_1(0)^c)}
  \|\mathrm{D}^{k}\bar{u}^\eta\|_{L^1(\R^d)}.
\end{align*}
We know from the proof of Lemma \ref{lem.DkV} that $\|\Phi\|_{L^1(B_1(0))}$ is finite if $\lambda<d$. 
Moreover, $|\Phi(x)|\le C|x|^{-\lambda}$ is bounded for $|x|>1$ since $\lambda>0$. This shows the first estimate. 

For the second estimate, we need to argue in a different way, since we cannot transfer the derivatives from $\mathrm{D}^2 V^\eta$ to $\bar{u}^\eta$ because of the modulus. Here, we need the assumption $\lambda<d-2$. We split the integral again into two parts:
\begin{align*}
  (|\mathrm{D}^2 V^\eta|*\bar{u}^\eta)(x)
  &= \int_{\R^d}|\mathrm{D}^2 V^\eta(y)|\bar{u}^\eta(x-y)\dd y \\
  &= \int_{\R^d}\bigg|\int_{\R^d}\chi^\eta(y-z)\mathrm{D}^2\Phi(z)
  \big(\mathrm{1}_{B_1(0)}+\mathrm{1}_{B_1(0)^c}\big)(z)\dd z\bigg|
  \bar{u}^\eta(x-y)\dd y.
\end{align*}
This shows that
\begin{align}\label{D2Vaux}
  \||\mathrm{D}^2 V^\eta|*\bar{u}^\eta\|_{L^\infty(\R^d)} 
  &\le \|\chi^\eta*(|\mathrm{D}^2\Phi|\mathrm{1}_{B_1(0)})
  *\bar{u}^\eta\|_{L^{\infty}(\R^d)} \\
  &\phantom{xx}+ \|\chi^\eta*(|\mathrm{D}^2\Phi|\mathrm{1}_{B_1(0)^c})
  *\bar{u}^\eta\|_{L^{\infty}(\R^d)}. \nonumber
\end{align}
The first term is estimated by exploiting the fact that the $L^1(B_1(0))$ norm of $|\mathrm{D}^2\Phi|=C|x|^{-\lambda-2}$ is bounded if $\lambda<d-2$:
\begin{align*}
  \|\chi^\eta*(|\mathrm{D}^2\Phi|\mathrm{1}_{B_1(0)})
  *\bar{u}^\eta\|_{L^{\infty}(\R^d)} 
  &\le \|\chi^\eta\|_{L^1(\R^d)}
  \|\mathrm{D}^2\Phi\|_{L^1(B_1(0))}\|\bar{u}^\eta\|_{L^\infty(\R^d)} \\
  &\le C\|\bar{u}^\eta\|_{L^\infty(\R^d)} \le C.
\end{align*}
The boundedness of the second term on the right-hand side of \eqref{D2Vaux} follows from the boundedness of $|\mathrm{D}^2\Phi|$ on $B_1(0)^c$:
\begin{align*}
  \|\chi^\eta*(|\mathrm{D}^2\Phi|\mathrm{1}_{B_1(0)^c})
  *\bar{u}^\eta\|_{L^{\infty}(\R^d)}
  \le \|\chi^\eta\|_{L^1(\R^d)}\|\mathrm{D}^2\Phi\|_{L^\infty(B_1(0)^c)}
  \|\bar{u}^\eta\|_{L^1(\R^d)} \le C.
\end{align*}
Collecting these estimates finishes the proof.
\end{proof}

\begin{remark}[Coulomb potential with cutoff]\label{rem.coulomb2}\rm
The previous lemma can be extended to the Coulomb potential with cutoff, as shown in \cite[Lemma 6.1]{LaPi17}. In this case, we need to define $V^\eta=\chi^\eta*\Phi_N$ and $\Phi_N$ satisfies $|\Phi_N(x)|\le C\max\{N^\nu,|x|^{-\lambda}\}$ for some $\nu>0$ and $C>0$. Then $\||\mathrm{D}^2V^\eta|*\bar{u}^\eta\|_{L^\infty(\R^d)}\le C\log N$. It can be seen that we may allow this logarithmic bound in our computations.
\qed\end{remark}


\section{PDE analysis}\label{sec.pde}

In this section, we provide a priori estimates for the solutions $\bar{u}^\eta$ and $u$ to \eqref{1.baru} and \eqref{1.u}, respectively, and for a backward dual evolution problem associated to the linearized version of \eqref{1.u}. The well-posedness of similar problems as \eqref{1.u} has been studied exhaustively in the last years; see, e.g., \cite{Lau07}. We see from the mild formulation that the solutions \eqref{1.baru} and \eqref{1.u} are positive.

\subsection{Estimates for the solutions to \eqref{1.u} and \eqref{1.baru}}

Observe that the nonlocal equation \eqref{1.baru} is well-posed for any given $\eta>0$, since the nonlocal term $\na V^\eta*\bar{u}^\eta$ is bounded in any $L^p(\R^d)$ norm thanks to the integrability of $\bar{u}^\eta$. 
We aim to derive $\eta$-independent $L^\infty(\R^d)$ bounds for $\bar{u}^\eta$ and $u$. For this, we introduce the parameter
\begin{equation*}
  p^* := \frac{d}{d-\lambda}\in\bigg(1,\frac{d}{2}\bigg),
  \quad\mbox{where }0<\lambda<d-2.
\end{equation*}
The following $L^p$ estimates follow from ideas in \cite{CLPZ04}, where the Coulomb case is considered.

\begin{lemma}[$L^\infty(\R^d)$ bounds]\label{lem.Linfty}
Let $u_0\in L^\infty(\R^d)$. There exists a constant $C(p^*)>0$, only depending on $d$, $\lambda$, $\sigma$, and $p^*$ such that if $\|u_0\|_{L^{p^*}(\R^d)} < C(p^*)$, then
\begin{align*}
  \sup_{0<t<T}\big(\|\bar{u}^\eta(t)\|_{L^\infty(\R^d)}
  + \|u(t)\|_{L^\infty(\R^d)}\big) &\le C(T), \\
  \sup_{0<t<T}\|(\bar{u}^\eta-u)(t)\|_{L^1(\R^d)\cap L^\infty(\R^d)}
  &\le C(T)\eta.
\end{align*}
\end{lemma}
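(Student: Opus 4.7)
I would split the argument into three steps: a critical $L^{p^*}$ estimate based on the Hardy--Littlewood--Sobolev (HLS) inequality and the smallness of $u_0$, a bootstrap to $L^\infty$ via $L^p$ iteration, and a Gronwall-type comparison of $\bar u^\eta$ and $u$ exploiting the mollification structure $V^\eta=\chi^\eta*\Phi$. For the $L^{p^*}$ estimate, I set $f:=u^{p^*/2}$ and test \eqref{1.u} with $u^{p^*-1}$; integrating by parts twice in the aggregation term yields
\begin{equation*}
\frac{1}{p^*}\frac{\dd}{\dd t}\|u\|_{L^{p^*}(\R^d)}^{p^*}=-\frac{4\sigma(p^*-1)}{(p^*)^2}\|\na f\|_{L^2(\R^d)}^2+\frac{\kappa(p^*-1)}{p^*}\int_{\R^d}u^{p^*}\,(-\Delta\Phi)*u\,\dd x,
\end{equation*}
where the sub-Coulomb condition $\lambda<d-2$ ensures that $-\Delta\Phi=c_{d,\lambda}|x|^{-(\lambda+2)}$ is a genuine positive Riesz kernel. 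The exponent $p^*=d/(d-\lambda)$ is exactly the HLS-critical one for which $\|(-\Delta\Phi)*u\|_{L^{d/2}(\R^d)}\le C_{\mathrm{HLS}}\|u\|_{L^{p^*}(\R^d)}$; combined with H\"older's inequality and the Sobolev embedding $\|f\|_{L^{2d/(d-2)}(\R^d)}\le C_S\|\na f\|_{L^2(\R^d)}$, this produces
\begin{equation*}
\frac{\dd}{\dd t}\|u\|_{L^{p^*}}^{p^*}\le(p^*-1)\|\na f\|_{L^2}^2\bigg(-\frac{4\sigma}{p^*}+\kappa C_S^2C_{\mathrm{HLS}}\|u\|_{L^{p^*}}\bigg).
\end{equation*}
Setting $C(p^*):=4\sigma/(p^*C_S^2C_{\mathrm{HLS}})$, the smallness hypothesis $\|u_0\|_{L^{p^*}}<C(p^*)$ renders the bracket nonpositive at $t=0$, and a standard continuity argument (valid for both signs of $\kappa$) propagates the property to all of $(0,T)$, so $\|u(t)\|_{L^{p^*}}\le C(p^*)$ uniformly in time.

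With $\|u(t)\|_{L^{p^*}}$ under control, I would next repeat the energy estimate for an arbitrary $p\in(p^*,\infty)$: the HLS/Sobolev chain is then subcritical, the aggregation term becomes a lower-order perturbation of the diffusion, and Gronwall closes to give $\|u(t)\|_{L^p}\le C(T,p)$ for every finite $p$. Passing to $p=\infty$ by iteration, or equivalently using the mild formulation $u(t)=e^{\sigma t\Delta}u_0-\kappa\int_0^t\na e^{\sigma(t-s)\Delta}\cdot(u\,\na\Phi*u)(s)\,\dd s$ together with the $L^p$--$L^q$ smoothing of the heat semigroup and the $L^p$ bounds just obtained, yields $\|u(t)\|_{L^\infty}\le C(T)$. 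The identical chain of inequalities applies to $\bar u^\eta$: since $\chi^\eta\ge 0$ with $\|\chi^\eta\|_{L^1}=1$, Young's convolution inequality gives $\|(-\Delta V^\eta)*\bar u^\eta\|_{L^{d/2}}\le\|(-\Delta\Phi)*\bar u^\eta\|_{L^{d/2}}$ and analogous controls at every stage, with constants independent of $\eta$, so $\bar u^\eta$ inherits the $L^{p^*}$ and $L^\infty$ bounds uniformly.

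For the quantitative comparison, let $w:=\bar u^\eta-u$ and subtract \eqref{1.u} from \eqref{1.baru} to obtain
\begin{equation*}
\pa_t w=\sigma\Delta w-\kappa\diver\big(w\,\na V^\eta*\bar u^\eta+u\,\na V^\eta*w+u\,(\na V^\eta-\na\Phi)*u\big),\quad w(0)=0.
\end{equation*}
The coefficients $\na V^\eta*\bar u^\eta$ and $\na\Phi*u$ are uniformly bounded in $L^\infty$ by Lemma \ref{lem.DkVbaru} and the previous step, so the two linear pieces in $w$ are harmless for Gronwall. The source term is the delicate object; factoring $\na V^\eta-\na\Phi=(\chi^\eta-\delta_0)*\na\Phi$ and transferring the mollifier onto $\na\Phi*u$ rather than onto $\na\Phi$ itself gives
\begin{equation*}
\|(\na V^\eta-\na\Phi)*u\|_{L^\infty(\R^d)}=\|(\chi^\eta-\delta_0)*(\na\Phi*u)\|_{L^\infty(\R^d)}\le C\eta\|\mathrm{D}^2\Phi*u\|_{L^\infty(\R^d)},
\end{equation*}
and $\|\mathrm{D}^2\Phi*u\|_{L^\infty}$ is finite precisely because $\lambda<d-2$, by the same splitting argument as in Lemma \ref{lem.DkVbaru}. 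An analogous $L^1$ estimate follows after transferring one derivative back onto $u$, using the parabolic regularity of the previous step. A standard $L^1$ bound (testing after regularization with $\mathrm{sign}(w)$) together with a Duhamel/Gronwall argument in $L^\infty$ then produce $\|w(t)\|_{L^1(\R^d)\cap L^\infty(\R^d)}\le C(T)\eta$. The main obstacle is precisely this last point: preserving the rate $\eta$ in spite of the singularity of $\na\Phi$ requires the mollifier to act on a \emph{smoothed} quantity, and it is here that the sub-Coulomb condition $\lambda<d-2$ plays its essential role.
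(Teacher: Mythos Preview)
Your overall architecture matches the paper's: the critical $L^{p^*}$ energy estimate via HLS and Sobolev, a bootstrap to $L^\infty$, and a Gronwall comparison for $\bar u^\eta-u$ based on the mollifier identity $\na V^\eta-\na\Phi=(\chi^\eta-\delta_0)*\na\Phi$ together with $\|\mathrm{D}^2\Phi*u\|_{L^\infty}<\infty$ for $\lambda<d-2$. The first and last steps are essentially what the paper does (the paper runs the difference estimate through the mild formulation rather than energy methods, but the content is the same).

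The genuine gap is in your bootstrap. The claim that for $p>p^*$ ``the HLS/Sobolev chain is then subcritical'' is not correct as stated: repeating the energy identity for general $p$ still gives
\[
\frac{\dd}{\dd t}\|w\|_{L^p}^p + \frac{4\sigma(p-1)}{p}\|\na w^{p/2}\|_{L^2}^2 \le (p-1)C'\|w\|_{L^{p^*}}\|\na w^{p/2}\|_{L^2}^2,
\]
because HLS against the kernel $|x|^{-(\lambda+2)}$ forces one factor into $L^{p^*}$ and Sobolev forces $w^p$ into $L^{d/(d-2)}$, independently of $p$. This is critical, not subcritical, and is absorbable only when $C'\|w\|_{L^{p^*}}\le 4\sigma/p$; the hypothesis $\|u_0\|_{L^{p^*}}<C(p^*)$ yields this for some $p_0$ slightly above $p^*$, not for all $p<\infty$. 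Your mild-formulation route to $L^\infty$ does not bypass the issue either, since bounding $\|\na V^\eta*w\|_{L^\infty}$ requires $w\in L^{q'}$ with $q'>d/(d-\lambda-1)>p^*$. The paper closes this gap by a nontrivial Stampacchia-type truncation: one tests with $p(w-K)_+^{p-1}$ and uses the $L^{p_0}$ bound together with Chebyshev to make $\|(w-K)_+\|_{L^{p^*}}$ arbitrarily small for $K$ large, which restores absorbability for every finite $p$. This step is the missing ingredient in your outline.
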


The constant $C(p^*)$ is determined explicitly from the (sharp) constants of the Hardy--Littlewood--Sobolev and Gagliardo--Nirenberg inequalities. This means that the initial data cannot be arbitrarily large in the $L^{p^*}(\R^d)$ norm, but we do {\em not} need to impose ``small'' initial data. Our result is natural, since equation \eqref{1.u} is aggregating for $\kappa=1$, and a restriction on the initial datum is expected. We split the proof of Lemma \ref{lem.Linfty} into two steps, first proving the bounds for any $p<\infty$ and then for $p=\infty$.

\begin{lemma}[$L^p(\R^d)$ estimate for $p<\infty$]\label{lem.p}
There exists a constant $C(p^*)>0$, only depending on $d$, $\lambda$, $p^*$, and $\sigma$ such that if $\|u_0\|_{L^{p^*}(\R^d)}\le C(p^*)$, then
\begin{align*}
  \sup_{0<t<T}\big(\|\bar{u}^\eta(t)\|_{L^{p^*}(\R^d)}
  + \|u(t)\|_{L^{p^*}(\R^d)}\big) \le \|u_0\|_{L^{p^*}(\R^d)}.
\end{align*}
Let $p<\infty$. If additionally $\|u_0\|_{L^{p^*}(\R^d)} < C(p^*)$ and $u_0\in L^p(\R^d)$ holds, then
\begin{align*}
  \sup_{0<t<T}\big(\|\bar{u}^\eta(t)\|_{L^p(\R^d)}
  + \|u(t)\|_{L^p(\R^d)}\big) \le C(T).
\end{align*}
\end{lemma}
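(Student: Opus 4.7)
The plan is to test equations \eqref{1.u} and \eqref{1.baru} against $pu^{p-1}$ and to absorb the resulting nonlinear drift using Hardy--Littlewood--Sobolev together with the Sobolev embedding. The key structural fact, valid precisely because $0<\lambda<d-2$, is that the Riesz kernel satisfies $-\Delta\Phi=c_\lambda|x|^{-\lambda-2}$ distributionally, with $c_\lambda=\lambda(d-\lambda-2)>0$ and no Dirac contribution; integration by parts in $\int u^{p-1}\na u\cdot(\na\Phi*u)\dd x$ then yields
\[
\frac{d}{dt}\|u\|_{L^p(\R^d)}^p+\frac{4\sigma(p-1)}{p}\|\na u^{p/2}\|_{L^2(\R^d)}^2 =\kappa(p-1)c_\lambda\int_{\R^d} u^p\bigl(|\cdot|^{-\lambda-2}*u\bigr)\dd x.
\]
Hardy--Littlewood--Sobolev with the admissible exponents $q_1=p^*=d/(d-\lambda)$ and $q_2=d/(d-2)$ (their reciprocals sum to $2-(\lambda+2)/d$), followed by Sobolev $\|v\|_{L^{2d/(d-2)}(\R^d)}\le C_S\|\na v\|_{L^2(\R^d)}$ applied to $v=u^{p/2}$, bounds the right-hand side by $C_{HLS}C_S^2\|u\|_{L^{p^*}(\R^d)}\|\na u^{p/2}\|_{L^2(\R^d)}^2$.

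Setting $C(p^*):=4\sigma/(p^*c_\lambda C_{HLS}C_S^2)$ and taking $p=p^*$ reduces the estimate to a closed differential inequality whose right-hand side is non-positive whenever $\|u\|_{L^{p^*}}\le C(p^*)$. A standard continuity (bootstrap) argument then shows that if $\|u_0\|_{L^{p^*}}\le C(p^*)$, the map $t\mapsto\|u(t)\|_{L^{p^*}}$ cannot cross this barrier and in fact $\|u(t)\|_{L^{p^*}}\le\|u_0\|_{L^{p^*}}$ for all $t\in(0,T)$, proving the first assertion.

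For the second assertion, the strict smallness $\|u_0\|_{L^{p^*}}<C(p^*)$ combined with the first part gives $M:=\sup_{t\in(0,T)}\|u(t)\|_{L^{p^*}}<C(p^*)$. The general-$p$ inequality then becomes $(d/dt)\|u\|_{L^p}^p\le(p-1)\bigl(c_\lambda C_{HLS}C_S^2 M-4\sigma/p\bigr)\|\na u^{p/2}\|_{L^2}^2$, which is non-positive and yields $L^p$-monotonicity for every $p\in[1,P_0]$ with $P_0:=p^*C(p^*)/M>p^*$. To reach arbitrary finite $p$, the plan is to iterate: once $u\in L^\infty((0,T);L^{p_k}(\R^d))$ is established for some $p_k\ge p^*$, redo the energy estimate applying HLS with the higher exponent $q_1=p_k$ (and a correspondingly smaller $q_2$), then use Gagliardo--Nirenberg interpolation between the conserved mass $\|u\|_{L^1}=1$ and the Sobolev control to close a Gronwall-type inequality for $\|u\|_{L^{p_{k+1}}}^{p_{k+1}}$ with some $p_{k+1}>p_k$. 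A finite number of such steps exhausts any prescribed $p$, with resulting constant $C(T)$ growing exponentially in $T$.

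The analysis of $\bar{u}^\eta$ is identical: since $V^\eta=\chi^\eta*\Phi$ with $\chi^\eta\ge 0$ and $\int\chi^\eta=1$, one has $-\Delta V^\eta=c_\lambda\chi^\eta*|\cdot|^{-\lambda-2}$, and $\chi^\eta$ can be absorbed into either factor of the resulting quadratic form via Young's convolution inequality, reproducing the same bound with $\eta$-independent constants. The main obstacle is the iterative bootstrap step: the direct energy identity only delivers monotonicity on $[1,P_0]$, so extending the $L^p$ control to arbitrary finite $p$ requires carefully pairing the Riesz HLS inequality at each new exponent with mass-conservation interpolation and tracking all constants through finitely many Moser-type steps.
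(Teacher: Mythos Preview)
Your treatment of the first assertion coincides with the paper's: both test with $pw^{p-1}$, apply Hardy--Littlewood--Sobolev with the pair $(p^*,d/(d-2))$ followed by the critical Sobolev embedding on $w^{p/2}$, and close by the obvious continuity argument with $C(p^*)=4\sigma/(p^*c_\lambda C_{HLS}C_S^2)$.

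For the second assertion you take a genuinely different route. The paper replaces the test function by $p(w-K)_+^{p-1}$ and uses the preliminary $L^{p_0}$ bound ($p_0>p^*$) together with the Chebyshev-type estimate $\|(w-K)_+\|_{L^{p^*}}\le\|w\|_{L^{p_0}}|\{w>K\}|^{1/p^*-1/p_0}\le CK^{-(1/p^*-1/p_0)}$ to make the coefficient in front of $\|\na(w-K)_+^{p/2}\|_{L^2}^2$ negative for $K$ large; the full $L^p$ norm is then reconstructed from $(w-K)_+$ and mass conservation. Your strategy instead keeps the original test function and feeds the $L^{p_k}$ bound back into HLS on the single factor $w$, which pushes the exponent on $w^p$ strictly below $d/(d-2)$ and hence yields a subcritical Gagliardo--Nirenberg power $\|\na w^{p/2}\|_{L^2}^{2\theta}$ with $\theta<1$, absorbed by Young. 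This is valid and arguably cleaner than the truncation argument. Two remarks: HLS requires the exponent on the single factor to satisfy $p_k<d/(d-\lambda-2)$, a constraint you should make explicit (it is harmless since $p^*<d/(d-\lambda-2)$, so any $p_k$ in the non-empty interval $(p^*,\min(P_0,d/(d-\lambda-2)))$ works); and once such a $p_k$ is fixed, your scheme already delivers $\tfrac{d}{dt}\|w\|_{L^p}^p\le C(p)$ for \emph{every} finite $p$, so no Moser iteration is needed and $C(T)$ grows linearly, not exponentially, in $T$.
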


\begin{proof}
We show the bounds only for $w:=\bar{u}^\eta$, since the computations for $u$ are similar. The idea is to use $pw^{p-1}$ as a test function in the weak formulation of \eqref{1.baru}. Writing $pw^{p-1}\na w=\na w^p$, integrating by parts, and recalling that $V^\eta=\chi^\eta*\Phi$, we arrive at
\begin{align*}
  \frac{\dd}{\dd t}&\int_{\R^d}w^p\dd x
  + \frac{4\sigma}{p}(p-1)\int_{\R^d}|\na w^{p/2}|^2\dd x
  = \kappa p(p-1)\int_{\R^d}w^{p-2}\na w\cdot(w\na V^\eta*w)\dd x \\
  &= -\kappa(p-1)\int_{\R^d}w^p(\Delta V^\eta*w)\dd x
  = \kappa(p-1)\int_{\R^d}w^p(\chi^\eta*\Delta\Phi)*w\dd x. \nonumber 
\end{align*}
From now on, $C>0$ denotes a generic constant with values changing from line to line. We apply the Hardy--Littlewood--Soblev inequality, with the exponents $1/p^* + (d-2)/d + (\lambda+2)/d=2$, and the Sobolev inequality:
\begin{align*}
  \int_{\R^d}&w^p(\chi^\eta*\Delta\Phi)*w\dd x
  \le C\|w^p\|_{L^{d/(d-2)}(\R^d)}\|w\|_{L^{p^*}(\R^d)} \\
  &= C\|w^{p/2}\|_{L^{2d/(d-2)}(\R^d)}^2\|w\|_{L^{p^*}(\R^d)}
  \le C'\|\na w^{p/2}\|_{L^2(\R^d)}^2\|w\|_{L^{p^*}(\R^d)}.
\end{align*}
This shows that
\begin{align*}
  \frac{\dd}{\dd t}\int_{\R^d}w^p\dd x
  + (p-1)\bigg(\frac{4\sigma}{p} - C'\|w\|_{L^{p^*}(\R^d)}\bigg)
  \int_{\R^d}|\na w^{p/2}|^2\dd x \le 0,
\end{align*}
and choosing $\|u_0\|_{L^{p^*}(\R^d)}\le C(p^*):=4\sigma/(p^*C')$, we infer with $p=p^*$ that $\|w(t)\|_{L^{p^*}(\R^d)}\le \|u_0\|_{L^{p^*}(\R^d)}\le C(p^*)$ for all $t>0$.

Under the slightly stronger condition $\|u_0\|_{L^{p^*}(\R^d)}<C(p^*)$, we can find some $p_0>p^*$ such that 
$4\sigma/p^*-C'\|w(t)\|_{L^{p^*}(\R^d)}>4\sigma/p_0-C'\|w(t)\|_{L^{p^*}(\R^d)}\ge 0$ for all $t>0$ such that we can improve the previous bound,
\begin{equation}\label{4.p0}
  \sup_{0<t<\infty}\|w(t)\|_{L^{p_0}(\R^d)}\le \|u_0\|_{L^{p_0}(\R^d)}.
\end{equation}

We extend this result to any $1\le p<\infty$. To this end, we use $p(w-K)_+^{p-1}$ as a test function in the weak formulation of \eqref{1.baru}, where $z_+=\max\{z,0\}$ for $z\in\R$, and $K>0$ will be determined later. Then, by integrating by parts,
\begin{align*}
  \frac{\dd}{\dd t}&\int_{\R^d}(w-K)_+^p\dd x 
  + \frac{4\sigma}{p}(p-1)\int_{\R^d}|\na(w-K)_+^{p/2}|^2\dd x \\
  &= -\kappa p\int_{\R^d}(w-K)_+^{p-1}\big(\na w\cdot(\na V^\eta*w)
  + w(\Delta V^\eta*w)\big)\dd x \\
  &= -\kappa\int_{\R^d}\na(w-K)_+^p\cdot(\na V^\eta*w)\dd x \\
  &\phantom{xx}
  - \kappa p\int_{\R^d}\big((w-K)_+^p + (w-K)_+^{p-1}K\big)
  (\Delta V^\eta*w)\dd x \\
  &= -\kappa(p-1)\int_{\R^d}(w-K)_+^p(\Delta V^\eta*w)\dd x
  - \kappa pK\int_{\R^d}(w-K)_+^{p-1}(\Delta V^\eta*w)\dd x \\
  &= -\kappa(p-1)\int_{\R^d}(w-K)_+^p(\chi^\eta*\Delta\Phi*w)\dd x \\
  &\phantom{xx}
  - \kappa pK\int_{\R^d}(w-K)_+^{p-1}(\chi^\eta*\Delta\Phi*w)\dd x.
\end{align*}
We decompose the space $\R^d$ into $\{|y|\le 1\}$ and $\{|y|>1\}$:
\begin{align*}
  (\chi^\eta&*\Delta\Phi*w)(x) \\
  &= \int_{\{|y|\le 1\}}\Delta\Phi(y)(\chi^\eta*w)(x-y)\dd y
  + \int_{\{|y|>1\}}\Delta\Phi(y)(\chi^\eta*w)(x-y)\dd y \\
  &\le \int_{\{|y|\le 1\}}\Delta\Phi(y)(\chi^\eta*w)(x-y)\dd y
  + \|\Delta\Phi\|_{L^\infty(B_1(0)^c)}
  \|\chi^\eta\|_{L^1(\R^d)}\|w\|_{L^1(\R^d)} \\
  &\le \int_{\{|y|\le 1\}}\Delta\Phi(y)(\chi^\eta*w)(x-y)\dd y
  + C\|w\|_{L^1(\R^d)}.
\end{align*}
where we used $|\Delta\Phi(y)|\le C|y|^{-\lambda-2}\le C$ for $|y|>1$ and Young's convolution inequality. It follows from $\|w\|_{L^1(\R^d)}=1$ that
\begin{align*}
  \frac{\dd}{\dd t}&\int_{\R^d}(w-K)_+^p\dd x 
  + \frac{4\sigma}{p}(p-1)\int_{\R^d}|\na(w-K)_+^{p/2}|^2\dd x \\
  &\le -(p-1)\int_{\R^d}(w(x)-K)_+^p\int_{\{|y|\le 1\}}
  \Delta\Phi(y)(\chi^\eta*w)(x-y)\dd y\dd x \\
  &\phantom{xx} -pK\int_{\R^d}(w(x)-K)_+^{p-1}\int_{\{|y|\le 1\}}
  \Delta\Phi(y)(\chi^\eta*w)(x-y)\dd y\dd x \\
  &\phantom{xx}+ (p-1)C\int_{\R^d}(w-K)_+^p\dd x
  + pCK\int_{\R^d}(w-K)_+^{p-1}\dd x.
\end{align*}
We add and subtract $K$ to obtain only expressions involving $w-K$. Then
\begin{align}\label{4.G}
  & \frac{\dd}{\dd t}\int_{\R^d}(w-K)_+^p\dd x 
  + \frac{4\sigma}{p}(p-1)\int_{\R^d}|\na(w-K)_+^{p/2}|^2\dd x 
  \le G_1 + G_2, \quad\mbox{where} \\
  & G_1 = (p-1)\int_{\R^d}(w(x)-K)_+^p\int_{\{|y|\le 1\}}
  \Delta\Phi(y)(\chi^\eta*(w-K)_+)(x-y)\dd y\dd x \nonumber \\
  &\phantom{G_1=} 
  + (p-1)K\int_{\R^d}(w(x)-K)_+^p\bigg(\int_{\{|y|\le 1\}}
  |\Delta\Phi(y)|\dd y\bigg)\dd x \nonumber \\
  &\phantom{G_1=} + (p-1)C\int_{\R^d}(w-K)_+^p\dd x, \nonumber \\
  & G_2 = pK\int_{\R^d}(w(x)-K)_+^{p-1}\int_{\{|y|\le 1\}}
  \Delta\Phi(y)(\chi^\eta*(w-K)_+)(x-y)\dd y\dd x \nonumber \\
  &\phantom{G_2=}
  + pK^2\int_{\R^d}(w(x)-K)_+^{p-1}\bigg(\int_{\{|y|\le 1\}}
  |\Delta\Phi(y)|\dd y\bigg)\dd x \nonumber \\
  &\phantom{G_2=}+ pCK\int_{\R^d}(w-K)_+^{p-1}\dd x. \nonumber 
\end{align}
Since $\int_{\{|y|\le 1\}}|\Delta \Phi(y)|\dd y\le C$ for $\lambda<d-2$, we can decompose $G_i\le G_{i1}+G_{i2}$ for $i=1,2$, where
\begin{align*}
  G_{11} &= (p-1)\int_{\R^d}(w(x)-K)_+^p\int_{\{|y|\le 1\}}
  \Delta\Phi(y)(\chi^\eta*(w-K)_+)(x-y)\dd y\dd x, \\
  G_{12} &= C(p-1)(K+1)\int_{\R^d}(w-K)_+^p\dd x, \\
  G_{21} &= pK\int_{\R^d}(w(x)-K)_+^{p-1}\int_{\{|y|\le 1\}}
  \Delta\Phi(y)(\chi^\eta*(w-K)_+)(x-y)\dd y\dd x, \\
  G_{22} &= CpK(K+1)\int_{\R^d}(w-K)_+^{p-1}\dd x.
\end{align*}

The term $G_{11}$ is handled by using the Hardy--Littlewood--Sobolev inequality with exponents $1/p^* + (d-2)/d + (\lambda+2)/d = 2$ and the Sobolev inequality:
\begin{align*}
  G_{11} &\le (p-1)\|(w-K)_+\|_{L^{p^*}(\R^d)}
  \|(w-K)_+^p\|_{L^{d/(d-2)}(\R^d)} \\
  &\le C(p-1)\|(w-K)_+\|_{L^{p^*}(\R^d)}
  \|\na(w-K)_+^{p/2}\|_{L^2(\R^d)}^2.
\end{align*}
Similarly, the Hardy--Littlewood--Sobolev inequality with the same exponents as before leads to
\begin{align*}
  G_{21} &\le Cp\|(w-K)_+\|_{L^{p^*}(\R^d)}
  \|(w-K)_+^{p-1}\|_{L^{d/(d-2)}(\R^d)} \\
  &= Cp\|(w-K)_+\|_{L^{p^*}(\R^d)}
  \|(w-K)_+^{p/2}\|_{L^{2d(p-1)/(d-2)p}(\R^d)}^{2(p-1)/p} \\
  &\le Cp\|(w-K)_+\|_{L^{p^*}(\R^d)}
  \|\na(w-K)_+^{p/2}\|_{L^{2}(\R^d)}^{2(p-1)\theta/p}
  \|(w-K)_+\|_{L^1(\R^d)}^{2(p-1)(1-\theta)/p},
\end{align*}
where we used in the last step the Gagliardo--Sobolev inequality with $\theta = 2d(1-1/r)/(d+2)$ and $r=2d(p-1)/((d-2)p)$. Notice that $r\ge 1$ if and only if $p\ge 2d/(d+2)$. Since $2(p-1)\theta/p<2$, we can apply Young's inequality, denoting $M^* := \|u_0\|_{L^{p^*}(\R^d)}$,
\begin{align*}
  G_{21} &\le CpM^*
  \|\na(w-K)_+^{p/2}\|_{L^2(\R^d)}^{2(p-1)\theta/p} \\
  &\le \frac{2\sigma}{p}(p-1)\|\na(w-K)_+^{p/2}\|_{L^2(\R^d)}^2 
  + C(M^*,p).
\end{align*}
Finally, we turn to the term $G_{22}$. By interpolation with $\theta=p(p-2)/(p-1)^2<1$ (for $p>2$) and Young's inequality, 
\begin{align*}
  G_{22} &\le C(K)p\|(w-K)_+\|_{L^p(\R^d)}^{\theta(p-1)}
  \|(w-K)_+\|_{L^1(\R^d)}^{(1-\theta)(p-1)} \\
  &\le \|(w-K)_+\|_{L^p(\R^d)}^p + C(K,p).
\end{align*}
We insert these estimates into \eqref{4.G} to infer that
\begin{align*}
  \frac{\dd}{\dd t}\int_{\R^d}&(w-K)_+^p\dd x
  + (p-1)\bigg(\frac{2\sigma}{p} - C\|(w-K)_+\|_{L^{p^*}(\R^d)}\bigg)
  \int_{\R^d}|\na(w-K)_+^{p/2}|^2\dd x \\
  &\le C(K)\int_{\R^d}(w-K)_+^p\dd x + C(K,p).
\end{align*}

Now, for any fixed $p>\max\{p^*,2\}$, we choose $K>0$ such that the coefficient of the second term on the left-hand side is positive. This is possible since, by interpolation and in view of \eqref{4.p0},
\begin{align*}
  C\|(w-K)_+\|_{L^{p^*}(\R^d)} &\le C\bigg(\int_{\{w>K\}}
  (w-K)^{p_0}\dd x\bigg)^{1/p_0}
  \bigg(\int_{\{w>K\}}\mathrm{1}\dd x\bigg)^{1/p^*-1/p_0} \\
  &\le C\|(w-K)_+\|_{L^{p_0}(\R^d)}\bigg(\int_{\{w>K\}}\frac{w}{K}\dd x
  \bigg)^{1/p^*-1/p_0} \\
  &\le C\|u_0\|_{L^{p_0}(\R^d)}
  \bigg(\frac{1}{K}\bigg)^{1/p^*-1/p_0} < \frac{2\sigma}{p},
\end{align*}
and the last inequality is true for sufficiently large $K$. Here, the fact that $p_0>p^*$ is crucial. We deduce from Gronwall's inequality that there exists $K(p)>0$ such that for any $K\ge K(p)$,
\begin{align*}
  \sup_{0<t<T}\int_{\R^d}(w(t)-K)_+^p\dd x
  \le C(K,T)\int_{\R^d}(u_0-K)_+^p\dd x + C(K,p,T).
\end{align*}
Thus, for any $0\le t\le T$ and for $K=K(p)$,
\begin{align*}
  \int_{\R^d}w(t)^p\dd x 
  &= \int_{\{w(t)>K\}}w(t)^p\dd x  + \int_{\{w(t)\le K\}}w(t)^p\dd x \\
  &\le C(p)\int_{\{w(t)>K\}}\big((w(t)-K)^p+K^p\big)\dd x 
  + K^{p-1}\int_{\{w(t)\le K\}}w(t)\dd x \\
  &\le C(p)\int_{\R^d}(w(t)-K)_+^p\dd x 
  + K^p\int_{\{w(t)>K\}}\frac{w(t)}{K}\dd x + K^{p-1} \\
  &\le C(K,p,T,\|u_0\|_{L^p(\R^d)}) = C(p,T,\|u_0\|_{L^p(\R^d)}).
\end{align*}
These estimates also hold for $u$, finishing the proof. 
\end{proof}

\begin{lemma}[$L^\infty(\R^d)$ estimate]\label{lem.wLinfty}
Let $\|u_0\|_{L^{p^*}(\R^d)}<C(p^*)$ for the constant $C(p^*)$ from Lemma \ref{lem.p} and let $u_0\in L^\infty(\R^d)$. Then
\begin{align*}
  \sup_{0<t<T}\big(\|\bar{u}^\eta(t)\|_{L^\infty(\R^d)}
  + \|u(t)\|_{L^\infty(\R^d)}\big) &\le C(T), \\
  \sup_{0<t<T}\|(\bar{u}^\eta-u)(t)\|_{L^1(\R^d)\cap L^\infty(\R^d)}
  &\le C(T)\eta.
\end{align*}
\end{lemma}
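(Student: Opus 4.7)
The plan is to combine the uniform $L^p$ bounds from Lemma \ref{lem.p} with Duhamel's formula and the smoothing estimates for the heat semigroup $(e^{\sigma t\Delta})_{t\ge 0}$, namely $\|e^{\sigma t\Delta}\diver f\|_{L^q(\R^d)}\le C(\sigma t)^{-1/2-d/2(1/p-1/q)}\|f\|_{L^p(\R^d)}$ for $1\le p\le q\le \infty$. First, I would establish the $L^\infty(\R^d)$ bound for $\bar u^\eta$ (the argument for $u$ is identical). From the mild formulation
\[
  \bar u^\eta(t) = e^{\sigma t\Delta}u_0 - \kappa\int_0^t e^{\sigma(t-s)\Delta}\diver\bigl(\bar u^\eta\na V^\eta*\bar u^\eta\bigr)(s)\dd s,
\]
the first term is bounded by $\|u_0\|_{L^\infty(\R^d)}$. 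For the second term, choose any $p>d$, so that $1/2+d/(2p)<1$, and estimate
\[
  \bigl\|\bar u^\eta\na V^\eta*\bar u^\eta\bigr\|_{L^p(\R^d)}
  \le \|\bar u^\eta\|_{L^p(\R^d)}\,\|\na V^\eta*\bar u^\eta\|_{L^\infty(\R^d)}.
\]
To avoid circularity with Lemma \ref{lem.DkVbaru} (whose bound depends on $\|\bar u^\eta\|_{L^\infty(\R^d)}$), I would estimate $\|\na V^\eta*\bar u^\eta\|_{L^\infty(\R^d)}$ directly by splitting the integral into $|x-y|\le 1$ and $|x-y|>1$: the first part is bounded by $\|\na V^\eta\|_{L^r(B_1)}\|\bar u^\eta\|_{L^{r'}(\R^d)}$ with $1<r<d/(\lambda+1)$, and the second by $\|\na V^\eta\|_{L^\infty(B_1^c)}\|\bar u^\eta\|_{L^1(\R^d)}$. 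Both factors involving $\bar u^\eta$ are controlled by Lemma \ref{lem.p}, and $\|\na V^\eta\|_{L^\infty(B_1^c)}$ is bounded uniformly in $\eta$ (for $\eta$ small) since $\chi^\eta$ has shrinking support. Taking $L^\infty$ norms in Duhamel's formula and integrating the resulting weakly singular kernel yields $\|\bar u^\eta(t)\|_{L^\infty(\R^d)}\le C(T)$ uniformly in $\eta$.

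Once $L^\infty$ bounds are available for both $\bar u^\eta$ and $u$, I would turn to the rate estimate for $w:=\bar u^\eta-u$, which solves
\[
  \pa_t w = \sigma\Delta w - \kappa\diver\bigl(w\,\na V^\eta*\bar u^\eta + u\,\na V^\eta*w + u(\na V^\eta-\na\Phi)*u\bigr).
\]
The first two terms are ``Lipschitz-type'' in $w$, while the third is a pure approximation error. Since $V^\eta=\chi^\eta*\Phi$ and $\int_{\R^d}|y|\chi^\eta(y)\dd y\le C\eta$, the elementary mollifier estimate $\|\chi^\eta*g-g\|_{L^\infty(\R^d)}\le C\eta\|\na g\|_{L^\infty(\R^d)}$ gives
\[
  \|(\na V^\eta-\na\Phi)*u\|_{L^\infty(\R^d)}
  \le C\eta\,\|\mathrm{D}^2\Phi*u\|_{L^\infty(\R^d)},
\]
and a splitting argument (as in the proof of Lemma \ref{lem.DkVbaru}) shows $\|\mathrm{D}^2\Phi*u\|_{L^\infty(\R^d)}\le C(\|u\|_{L^1(\R^d)}+\|u\|_{L^\infty(\R^d)})\le C(T)$, using precisely the sub-Coulomb hypothesis $\lambda<d-2$ to make $|\mathrm{D}^2\Phi|$ locally integrable near the origin.

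Finally, I would apply Duhamel's formula to $w$ and take the $L^1(\R^d)\cap L^\infty(\R^d)$ norm, exploiting $\|e^{\sigma t\Delta}\diver f\|_{L^q(\R^d)}\le C(\sigma t)^{-1/2}\|f\|_{L^q(\R^d)}$ for $q\in\{1,\infty\}$. Using $\|\na V^\eta*\bar u^\eta\|_{L^\infty(\R^d)}+\|\na V^\eta*u\|_{L^\infty(\R^d)}\le C$ (from the first part of Lemma \ref{lem.DkVbaru}, now available thanks to the $L^\infty$ bounds), as well as $\|\na V^\eta*w\|_{L^\infty(\R^d)}\le C(\|w\|_{L^1(\R^d)}+\|w\|_{L^\infty(\R^d)})$ by the same near/far splitting (which requires $\lambda+1<d$), one arrives at
\[
  \|w(t)\|_{L^1(\R^d)\cap L^\infty(\R^d)}
  \le C\int_0^t (t-s)^{-1/2}\bigl(\|w(s)\|_{L^1(\R^d)\cap L^\infty(\R^d)}+\eta\bigr)\dd s.
\]
A standard generalized Gronwall lemma for Volterra inequalities with weakly singular kernel then yields $\|w(t)\|_{L^1(\R^d)\cap L^\infty(\R^d)}\le C(T)\eta$ on $[0,T]$. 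The main obstacle, in my view, is the bookkeeping around circularity in the first step: one must carefully avoid invoking Lemma \ref{lem.DkVbaru} with $\|\bar u^\eta\|_{L^\infty(\R^d)}$ before that bound has been established, which is why the direct splitting of $\na V^\eta*\bar u^\eta$ via $L^{r'}$ bounds from Lemma \ref{lem.p} is essential.
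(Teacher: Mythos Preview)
Your proposal is correct and follows essentially the same approach as the paper: mild formulation with heat-kernel smoothing, the same near/far splitting of $\na V^\eta*\bar u^\eta$ to avoid circularity (using the $L^p$ bounds from Lemma~\ref{lem.p} rather than Lemma~\ref{lem.DkVbaru}), and the same three-term decomposition of $\bar u^\eta-u$ with the mollifier estimate on $(\na V^\eta-\na\Phi)*u$ producing the factor $\eta$ via $\|\mathrm{D}^2\Phi*u\|_{L^\infty}$ and the sub-Coulomb condition. The only cosmetic difference is that the paper writes the heat-kernel estimate via $\|\na G(t-s,\cdot)\|_{L^q}$ with $1<q<d/(d-1)$ and Young's inequality, whereas you phrase it as the semigroup smoothing $L^p\to L^\infty$ with $p>d$; these are equivalent.
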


\begin{proof}
We show the first estimate only for $w=\bar{u}^\eta$, a similar argument leads to the bounds for $u$. The solution $w$ to \eqref{1.baru} can be written in mild formulation as 
\begin{align}\label{4.mild}
  w(t,x) = G(t,x)*u_0 - \int_0^t\int_{\R^d}G(t-s,x-y)\diver\big(
  w(\na V^\eta*w)\big)(s,y)\dd y\dd s,
\end{align}
where $G(t,x)=(4\pi t)^{-d/2}\exp(-|x|^2/(4t))$ is the heat kernel. This formulation is possible for sufficiently smooth solutions $w$, for instance in the framework of an approximated problem. Integrating by parts and taking the supremum over $\R^d$ on both sides and Young's inequality gives
\begin{align}\label{4.winfty}
  \sup_{0<t<T}\|w(t)\|_{L^\infty(\R^d)}
  &\le \sup_{0<t<T} \|G(t,\cdot)\|_{L^1(\R^d)}\|u_0\|_{L^\infty(\R^d)} 
  + \sup_{0<t<T}\int_0^t\|\na G(t-s,\cdot)\|_{L^q(\R^d)} \\
  &\phantom{xx}\times\|w(s)\|_{L^{q'}(\R^d)}
  \|(\na V^\eta*w)(s)\|_{L^\infty(\R^d)}\dd s, \nonumber
\end{align}
where $q,q'>1$ and $1/q+1/q'=1$. We estimate the three norms in the time integral. Lemma \ref{lem.p} shows that any $L^{q'}(\R^d)$ norm of $w$ is bounded uniformly in $(0,T)$ such that $\|w(s)\|_{L^{q'}(\R^d)}\le C(T)$ for $0<s<T$. Furthermore, since $|\na\Phi(y)|\le C$ for $|y|>1$ and $|\na\Phi(y)|\le C|y|^{-\lambda-1}$ for $|y|\le 1$, for $1<p<d/(\lambda +1)$ and $1/p+1/p'=1$,
\begin{align*}
  |(&\na V^\eta*w)(s,x)| \\
  &\le \bigg|\int_{\{|y|>1\}}\na\Phi(y)(\chi^\eta*w)(s,x-y)\dd y\bigg| 
  + \bigg|\int_{\{|y|\le 1\}}\na\Phi(y)(\chi^\eta*w)(s,x-y)\dd y\bigg| \\
  &\le C\bigg|\int_{\{|y|>1\}}(\chi^\eta*w)(s,x-y)\dd y\bigg| 
  + C\bigg(\int_{\{|y|\le 1\}}|y|^{(-\lambda-1)p}\dd y\bigg)^{1/p}
  \|\chi^\eta*w(s)\|_{L^{p'}(\R^d)} \\
  &\le C\|\chi^\eta\|_{L^1(\R^d)}\|w\|_{L^1(\R^d)}
  + C\|\chi^\eta\|_{L^1(\R^d)}\|w\|_{L^{p'}(\R^d)}
  \le C(\|u_0\|_{L^{p'}(\R^d)}).
\end{align*}
Moreover, the heat kernel satisfies for all $q \geq 1$ the estimate
\begin{equation}\label{4.naG}
  \|\na G(t-s,\cdot)\|_{L^q(\R^d)}
  \le \frac{C(d)}{|t-s|^{d(q-1)/(2q)+1/2}}.
\end{equation}
To estimate \eqref{4.winfty}, we choose $1<q<d/(d-1)$, which implies that $d(q-1)/2q+1/2<1$, such that the time integral over $\|\na G(t-s,\cdot)\|_{L^q(\R^d)}$ is finite. We conclude from \eqref{4.winfty} that
\begin{align*}
  \sup_{0<t<T}\|w(t)\|_{L^\infty(\R^d)}
  \le C\|u_0\|_{L^\infty(\R^d)} + C(T,\|u_0\|_{L^{p'}(\R^d)}).
\end{align*}

It remains to estimate the difference $v:=\bar{u}^\eta-u$. It satisfies the mild formulation $v(t)=P_1+P_2+P_3$, where
\begin{align*}
  P_1 &= \int_0^t\na G(t-s,\cdot)v\na V^\eta*\bar{u}^\eta(s)\dd s,\\
  P_2 &= \int_0^t\na G(t-s,\cdot)u\na V^\eta*v(s)\dd s, \\
  P_3 &= \int_0^t\na G(t-s,\cdot)u\na(\chi^\eta*\Phi-\Phi)*u(s)\dd s.
\end{align*}
Let $1\le p\le\infty$. Then, by estimate \eqref{4.naG} and Lemma \ref{lem.DkVbaru} (here we need $\lambda<d-2$),
\begin{align*}
  \|P_1\|_{L^p(\R^d)} &\le C(t)\int_0^t
  \|\na G*(v\na V^\eta*\bar{u}^\eta)\|_{L^p(\R^d)}\dd s \\
  &\le C(T)\int_0^t\|\na G\|_{L^1(\R^d)}\|v\|_{L^p(\R^d)}
  \|\na V^\eta*\bar{u}^\eta\|_{L^\infty(\R^d)}\dd s \\
  &\le C(T)\int_0^t(t-s)^{-1/2}\|v(s)\|_{L^p(\R^d)}\dd s \\
  &\le C(T)\int_0^t(t-s)^{-1/2}\big(
  \|v(s)\|_{L^1(\R^d)}+\|v(s)\|_{L^\infty(\R^d)}\big)\dd s,
\end{align*}
and the last step follows by interpolation. The constant $C(T)$ also depends on the $L^1(\R^d)$ and $L^\infty(\R^d)$ norms of $u_0$. Similarly, by interpolation and since $\|u\|_{L^p(\R^d)}$ is bounded,
\begin{align*}
  \|P_2\|_{L^p(\R^d)} &\le C(T)\int_0^t
  \|\na G\|_{L^1(\R^d)}\|u\|_{L^p(\R^d)}
  \|\na V^\eta*v\|_{L^\infty(\R^d)}\dd s \\
  &\le C(T)\int_0^t(t-s)^{-1/2}
  \|v(s)\|_{L^1(\R^d)\cap L^\infty(\R^d)}\dd s.
\end{align*}
Finally, using the mean-value estimate $\|\chi^\eta*w-w\|_{L^\infty(\R^d)}\le \eta\|\na w\|_{L^\infty(\R^d)}$ ,
\begin{align*}
  \|P_3\|_{L^p(\R^d)} &\le C(T)\int_0^t
  \|\na G\|_{L^1(\R^d)}\|u\|_{L^p(\R^d)}
  \|\chi^\eta*(\na\Phi*u)-(\na\Phi*u)\|_{L^\infty(\R^d)}\dd s \\
  &\le C(T)\eta\int_0^t(t-s)^{-1/2}
  \|\mathrm{D}^2\Phi*u(s)\|_{L^\infty(\R^d)}\dd s.
\end{align*}

Similarly as in the proof of Lemma \ref{lem.DkVbaru}, we have
\begin{align*}
  \|\mathrm{D}^2\Phi*u\|_{L^\infty(\R^d)}
  \le \|\mathrm{D}^2\Phi\|_{L^1(B_1(0))}\|u\|_{L^\infty(B_1(0))}
  + \|\mathrm{D}^2\Phi\|_{L^\infty(B_1(0)^c)}\|u\|_{L^1(B_1(0)^c)}
  \le C,
\end{align*}
again using the condition $\lambda<d-2$. Hence, $\|P_3\|_{L^p(\R^d)}\le C(T)\eta$. It follows that
\begin{align*}
  \|v(t)\|_{L^p(\R^d)} \le C(T)\eta + C(T)\int_0^t(t-s)^{-1/2}
  \|v(s)\|_{L^1(\R^d)\cap L^\infty(\R^d)}\dd s.
\end{align*}
Adding this inequality for $p=1$ and $p=\infty$ yields
\begin{align*}
  \|v(t)\|_{L^1(\R^d)\cap L^\infty(\R^d)}
  &= \|v(t)\|_{L^1(\R^d)}+\|v(t)\|_{L^\infty(\R^d)} \\
  &\le  C(T)\eta + C(T)\int_0^t(t-s)^{-1/2}
  \|v(s)\|_{L^1(\R^d)\cap L^\infty(\R^d)}\dd s,
\end{align*}
and the result follows after applying Gronwall's inequality, observing that $\int_0^t(t-s)^{-1/2}\dd s$ is integrable for $t\ge 0$.
\end{proof}

Finally, we show some higher-order estimates.

\begin{lemma}[Higher-order estimates]\label{lem.higher}
Let $k\in\N$, $\|u_0\|_{L^{p^*}(\R^d)}< C(p^*)$ with the constant $C(p^*)$ from Lemma \ref{lem.p}, and let $u_0\in X_k:=W^{k,1}(\R^d)\cap W^{k,\infty}(\R^d)$. Then
$$
  \sup_{0<t<T}\big(\|\bar{u}^\eta(t)\|_{X_k} + \|u(t)\|_{X_k}\big)
  \le C(k,T).
$$
Moreover, if $k=1$, then it holds that
$$
  \sup_{0<t<T}\|(\bar{u}^\eta-u)(t)\|_{X_1}\le C(T)\eta.
$$
\end{lemma}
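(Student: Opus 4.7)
The plan is to proceed by induction on $k$, using the mild formulation \eqref{4.mild}. The base case $k=0$ is covered by Lemma \ref{lem.wLinfty}, since $X_0 = L^1(\R^d) \cap L^\infty(\R^d)$. Assume the bounds hold up to order $k-1$. Writing $w = \bar{u}^\eta$ (the argument for $u$ is analogous), I would apply a multi-index derivative $\mathrm{D}^\alpha$ with $|\alpha|=k$ to \eqref{4.mild}, obtaining
$$
  \mathrm{D}^\alpha w(t) = G(t) * \mathrm{D}^\alpha u_0 - \int_0^t \nabla G(t-s) * \mathrm{D}^\alpha\big(w\,(\nabla V^\eta * w)\big)(s) \, \dd s.
$$
The Leibniz rule expands the integrand as a sum of products $\mathrm{D}^\beta w \cdot (\nabla V^\eta * \mathrm{D}^{\alpha - \beta} w)$ with $\beta \leq \alpha$. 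Crucially, the convolution commutes with differentiation, so no derivatives ever fall on $\nabla V^\eta$ itself; this is the mechanism that keeps every estimate uniform in $\eta$.

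Next, I would take the $L^p$ norm for $p \in \{1, \infty\}$ and apply Young's inequality together with $\|\nabla G(t-s)\|_{L^1(\R^d)} \leq C(t-s)^{-1/2}$. The key uniform-in-$\eta$ bound I will invoke is
$$
  \|\nabla V^\eta * \mathrm{D}^{\gamma} w\|_{L^\infty(\R^d)} \leq C \|\mathrm{D}^\gamma w\|_{L^1(\R^d) \cap L^\infty(\R^d)} \quad \mbox{for } |\gamma| \leq k,
$$
which follows from the near-field/far-field decomposition of $\nabla\Phi$ already used in the proof of Lemma \ref{lem.DkVbaru}. Cross terms in the Leibniz sum with $|\beta| < k$ and $|\alpha - \beta| < k$ are controlled by the inductive hypothesis, so the only contributions that need to be absorbed by Gronwall are the two extreme terms $\beta = 0$ and $\beta = \alpha$. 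Summing over $|\alpha| = k$ and $p \in \{1,\infty\}$ yields
$$
  \|w(t)\|_{X_k} \leq C(k,T) + C(k,T) \int_0^t (t-s)^{-1/2} \|w(s)\|_{X_k}\,\dd s,
$$
and the singular Gronwall lemma (the kernel $(t-s)^{-1/2}$ being integrable) closes the estimate.

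For the quantitative $X_1$ bound on $v := \bar{u}^\eta - u$, I would differentiate the mild decomposition $v = P_1 + P_2 + P_3$ from the proof of Lemma \ref{lem.wLinfty} once in space and apply $L^p$ estimates for $p \in \{1,\infty\}$ exactly as above. The gradients of $P_1$ and $P_2$ produce Gronwall-type terms involving $\|v(s)\|_{X_1}$ via Young's inequality and the uniform bound on $\nabla V^\eta * v$, while the gradient of the source $P_3$ is estimated via
$$
  \|\nabla(\chi^\eta * \Phi - \Phi) * u\|_{L^p(\R^d)} = \|(\chi^\eta - \delta_0) * (\nabla\Phi * u)\|_{L^p(\R^d)} \leq C\eta \|\mathrm{D}^2\Phi * u\|_{L^p(\R^d)},
$$
which, together with the sub-Coulomb bound $\|\mathrm{D}^2\Phi * u\|_{L^1 \cap L^\infty} \leq C$ from Lemma \ref{lem.DkVbaru} and the $X_1$-bound on $u$ just established, yields the desired factor $O(\eta)$.

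The main obstacle I expect is the combinatorial bookkeeping of the Leibniz expansion at high $k$: one must check carefully that every term other than the two extreme ones is absorbable into the constant via the induction hypothesis and that the resulting constants do not degenerate in $\eta$. The central technical ingredient---uniform-in-$\eta$ control of $\nabla V^\eta * \mathrm{D}^\gamma w$ in $L^\infty$---is in essence a rerun of the proof of Lemma \ref{lem.DkVbaru}, and the sub-Coulomb condition $\lambda < d - 2$ enters only through the final estimate of $P_3$, exactly as in Lemma \ref{lem.wLinfty}.
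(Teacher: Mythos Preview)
Your proposal is correct and follows essentially the same route as the paper: mild formulation, induction on $k$, Leibniz expansion with all derivatives pushed onto $w$ (never onto $V^\eta$) so that only the uniform-in-$\eta$ bound $\|\na V^\eta*g\|_{L^\infty}\le C\|g\|_{L^1\cap L^\infty}$ is used, and the singular Gronwall lemma to close.

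One small slip to fix: your claim $\|\mathrm{D}^2\Phi*u\|_{L^1\cap L^\infty}\le C$ is false for the $L^1$ part, since $|x|^{-\lambda-2}\notin L^1(B_1(0)^c)$ precisely when $\lambda<d-2$, and Lemma~\ref{lem.DkVbaru} only asserts the $L^\infty$ bound. Fortunately this is never needed: in the $P_3$ term you always have the product $u\cdot\big(\na(V^\eta-\Phi)*u\big)$, so for $p=1$ place the $L^1$ norm on the factor $u$ (or $\na u$) and keep the convolution in $L^\infty$, exactly as the paper does. With that pairing the estimate $\|\na P_3\|_{L^1\cap L^\infty}\le C(T)\eta$ goes through.
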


It is possible to achieve the estimate $C(T)\eta^2$ by exploiting the radial symmetry of the potential and using the boundedness of $\|\mathrm{D}^3\Phi* u\|_{L^\infty(\R^d)}$; see \cite[(22)]{CHH23} for a computation.

\begin{proof}
The proof follows from the mild formulation \eqref{4.mild} of the solution $w=\bar{u}^\eta$ and an iteration argument. We need to estimate the drift term. For this, let first $k=1$ and let $1\le p\le\infty$. Then
\begin{align*}
  \|\diver(w\na V^\eta*w)\|_{L^p(\R^d)}
  &\le \|\na w\cdot(\na V^\eta*w)\|_{L^p(\R^d)}
  + \|w(\Delta V^\eta*w)\|_{L^p(\R^d)} \\
  &\le \|\na w\|_{L^p(\R^d)}\|\na V^\eta*w\|_{L^\infty(\R^d)}
  + \|w\|_{L^p(\R^d)}\|\Delta V^\eta*w\|_{L^\infty(\R^d)} \\
  &\le C(T)\|\na w\|_{L^p(\R^d)} + C(T),
\end{align*}
where we used Lemmas \ref{lem.DkVbaru} and \ref{lem.DkVbaru} to estimate the convolutions as well as the bound $\|w\|_{L^1(\R^d)}+\|w\|_{L^\infty(\R^d)}\le C(T)$ from Lemma \ref{lem.wLinfty}. Then, applying the gradient to the mild formulation \eqref{4.mild} and using Young's convolution inequality,
\begin{align*}
  \|\na w(t)\|_{L^p(\R^d)}
  &\le \|\na u_0\|_{L^p(\R^d)} + C(T)\int_0^t
  \|\na G(t-s,\cdot)*\diver(w\na V^\eta*w)(s)\|_{L^p(\R^d)}\dd s \\
  &\le \|\na u_0\|_{L^p(\R^d)} + C(T)\int_0^t(t-s)^{-1/2}
  \|\na w(s)\|_{L^p(\R^d)}\dd s  + C(T).
\end{align*}
An application of Gronwall's inequality yields the claim for $k=1$.

Next, we prove the estimates by an induction argument. Assume that for some $k\in\N$ and for any $1\le p\le\infty$,
$$
  \sup_{0<t<T}\|w(t)\|_{W^{k,p}(\R^d)}\le C(T).
$$ 
Let $\alpha\in\N_0^d$ with $|\alpha|=k+1$ be a multi-index. We need to estimate the mild formulation
\begin{align}\label{4.alphaw}
  \|\mathrm{D}^\alpha w(t)\|_{L^p(\R^d)} 
  \le \|\mathrm{D}^\alpha u_0\|_{L^p(\R^d)}
  + C(T)\int_0^t\|\na G*\mathrm{D}^\alpha
  (w\na V^\eta*w)\|_{L^p(\R^d)}\dd s.
\end{align}
We deduce from the induction assumption and Lemma \ref{lem.DkVbaru} for $\lambda<d-2$ that
\begin{align*}
  \|\mathrm{D}^\alpha(w\na V^\eta*w)\|_{L^p(\R^d)}
  &\le \|\mathrm{D}^\alpha w\|_{L^p(\R^d)}
  \|\na V^\eta*w\|_{L^\infty(\R^d)} \\
  &\phantom{xx}
  + C(k)\sum_{\ell=0}^k\|\mathrm{D}^\ell w\|_{L^p(\R^d)}
  \|\mathrm{D}^2 V^\eta*\mathrm{D}^{k-\ell}w\|_{L^\infty(\R^d)} \\
  &\le C(T)\|\mathrm{D}^\alpha w\|_{L^p(\R^d)} + C(k,T).
\end{align*}
Then \eqref{4.alphaw} shows that
\begin{align*}
  \|\mathrm{D}^\alpha w(t)\|_{L^p(\R^d)} 
  \le \|\mathrm{D}^\alpha u_0\|_{L^p(\R^d)}
  + C(T)\int_0^t(t-s)^{-1/2}\|\mathrm{D}^\alpha w(s)\|_{L^p(\R^d)}
  \dd s + C(k,T),
\end{align*} 
and Gronwall's inequality implies an estimate for $w(t)$ in $W^{k+1,\infty}(\R^d)$ uniformly in $t\in(0,T)$. The bound for $\mathrm{D}^\alpha u(t)$ is proved similarly.

The bound for the difference $\bar{u}^\eta-u$ is derived in the same way as in Lemma \ref{lem.wLinfty} by performing an induction argument as above, taking into account the estimate
$$
  \|\mathrm{D}^{\alpha}(\chi^\eta*\na\Phi-\na\Phi)*u\|_{L^\infty(\R^d)}
  \le C\eta\|\mathrm{D^2}\Phi*\mathrm{D}^{k} u\|_{L^\infty(\R^d)}
  \le C\eta, 
$$
where $|\alpha| \leq k+1$ and $C>0$ depends on $\|u\|_{X_k}$, which is bounded by the induction hypothesis. This finishes the proof. 
\end{proof}


\subsection{Estimates for the solution to a linear dual evolution problem}
\label{sec.dual}

In the study of fluctuations, we need the dual of the operator associated to the linearized version of \eqref{1.u}, 
\begin{align*}
  \mathcal{L}f := \pa_t f  - \sigma\Delta f 
  + \kappa\diver(f\na\Phi*u + u\na\Phi*f),
\end{align*}
where $u$ solves \eqref{1.u}. The dual operator $\mathcal{L}^*$ reads as
\begin{align*}
  \mathcal{L}^*\psi = -\pa_{t}\psi - \sigma\Delta\psi 
  - \kappa\big((\na\Phi*u)\cdot\na\psi - \na\Phi*(u\na\psi)\big),
\end{align*}
since, by the antisymmetry of $\na\Phi$,
$$
  \langle u\na\Phi*\phi,\na f\rangle
  =\langle\na\Phi*\phi,u\na f\rangle
  =-\langle f,\na\Phi*(u\na\psi)\rangle.
$$
We need a priori estimates for the corresponding backward dual evolution problem 
\begin{align}\label{4.back}
  -\pa_s v - \sigma\Delta v 
  = \kappa(\na\Phi*u)\cdot\na v - \kappa\na\Phi*(u\na v)
  \quad\mbox{in }\R^d,\ s\in(0,t), \quad v(t)=\phi,
\end{align}
where $\phi$ is the end datum. We show the existence of a solution $T_\phi^t(s,x):=v(s,x)$ to \eqref{4.back} and some a priori estimates. Let $u\in L^\infty(0,T;X_{k+1})$ be the solution to \eqref{1.u} or $u_0 \in X_{k+1}$ and $k\in \N$, recalling the definition $X_k:=W^{k,1}(\R^d)\cap W^{k,\infty}(\R^d)$.

\begin{lemma}\label{lem.estT}
Let $k\in\N$, $0<t<T$, and $\phi\in W^{k,\infty}(\R^d)$. Then there exists a unique solution $T_\phi^t$ to \eqref{4.back} satisfying
$$
  \|\mathrm{D}^k T_\phi^t\|_{L^\infty(\R^d)}
  \le C(k,T)\|\phi\|_{W^{k,\infty}(\R^d)},
$$
where $C(k,T)>0$ depends on $L^\infty(0,T;X_{k+1})$ norm of $u$.
\end{lemma}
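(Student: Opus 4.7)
The plan is to reduce \eqref{4.back} to a forward parabolic Cauchy problem via the time reversal $\tau=t-s$, $w(\tau,x):=T_\phi^t(t-\tau,x)$, which satisfies
$$
  \pa_\tau w - \sigma\Delta w = \kappa(\na\Phi*u(t-\tau))\cdot\na w - \kappa\na\Phi*(u(t-\tau)\na w), \quad w(0)=\phi,
$$
on $(0,t)\times\R^d$, and then to work with the mild formulation $w(\tau)=G(\tau)*\phi+\int_0^\tau G(\tau-\rho)*L[w](\rho)\dd\rho$, where $L[w]$ denotes the right-hand side and $G$ is the heat kernel. Existence and uniqueness will follow from a standard Banach fixed-point argument in $C([0,T];L^\infty(\R^d))$ once the a priori $L^\infty$-estimate below is established; uniqueness is automatic because the equation is linear.

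The main step is the $L^\infty$-bound on $w$, which is where the singular structure of $\Phi$ must be handled. The idea is to rewrite the first term via the product rule,
$$
  (\na\Phi*u)\cdot\na w = \diver\big((\na\Phi*u)w\big) - (\Delta\Phi*u)w,
$$
and to use $\Delta\Phi*u=\na\Phi*\na u$, which is legitimate because $u\in X_{k+1}$ by Lemma \ref{lem.higher}. Convolution with $G(\tau-\rho)$ then transfers the divergence onto the heat kernel, producing the factor $\|\na G(\tau-\rho)\|_{L^1(\R^d)}\le C(\tau-\rho)^{-1/2}$, while $\|\na\Phi*u\|_{L^\infty(\R^d)}$ and $\|\na\Phi*\na u\|_{L^\infty(\R^d)}$ are finite by the argument of Lemma \ref{lem.DkVbaru} applied to $\Phi$ itself (splitting into $B_1(0)$ and its complement and using $u,\na u\in L^1(\R^d)\cap L^\infty(\R^d)$; here $\lambda<d-2<d$ suffices for these two bounds). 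For the nonlocal term, I will use the integration-by-parts identity
$$
  \na\Phi*(u\na w) = (\Delta\Phi)*(uw) - \na\Phi*(w\na u),
$$
and bound the first summand by $\||\mathrm{D}^2\Phi|*(uw)\|_{L^\infty(\R^d)}\le C\|uw\|_{L^1(\R^d)\cap L^\infty(\R^d)}\le C\|w\|_{L^\infty(\R^d)}$, which is exactly the sub-Coulomb estimate of Lemma \ref{lem.DkVbaru} used without mollification, and which critically requires $\lambda<d-2$. Combining these bounds yields
$$
  \|w(\tau)\|_{L^\infty(\R^d)} \le \|\phi\|_{L^\infty(\R^d)} + C(T)\int_0^\tau\big((\tau-\rho)^{-1/2}+1\big)\|w(\rho)\|_{L^\infty(\R^d)}\dd\rho,
$$
and a singular Gronwall inequality gives $\|w\|_{L^\infty(0,T;L^\infty(\R^d))}\le C(T)\|\phi\|_{L^\infty(\R^d)}$.

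For the higher-order estimate, I will proceed by induction on $k$. Differentiating the equation $k$ times in space, the top-order term still has the structure $(\na\Phi*u)\cdot\na\mathrm{D}^\alpha w - \na\Phi*(u\na\mathrm{D}^\alpha w)$ and is treated exactly as above, while commutator terms distribute at most $k$ derivatives onto $u$ and the remaining derivatives onto $w$ of order at most $k-1$; these lower-order contributions are controlled via the induction hypothesis and via $u\in L^\infty(0,T;X_{k+1})$, which is Lemma \ref{lem.higher}. Applying $\mathrm{D}^\alpha$ inside the mild formulation and repeating the same reformulation/Gronwall scheme produces
$$
  \|\mathrm{D}^\alpha w(\tau)\|_{L^\infty(\R^d)} \le \|\mathrm{D}^\alpha\phi\|_{L^\infty(\R^d)} + C(k,T)\int_0^\tau\big((\tau-\rho)^{-1/2}+1\big)\|\mathrm{D}^\alpha w(\rho)\|_{L^\infty(\R^d)}\dd\rho + C(k,T)\|\phi\|_{W^{k-1,\infty}(\R^d)},
$$
from which the claim follows by singular Gronwall and the induction hypothesis.

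The main obstacle I anticipate is precisely the nonlocal term $\na\Phi*(u\na w)$: a naive $L^\infty$-estimate would require $\|\na w\|_{L^\infty(\R^d)}$ at the first step, which is not yet available. The integration-by-parts identity above together with the sub-Coulomb bound on $|\mathrm{D}^2\Phi|*(\cdot)$ (the same mechanism that dictates $\lambda<d-2$ in Theorem \ref{thm.prob}) is what breaks the circularity and reduces everything to $\|w\|_{L^\infty(\R^d)}$. All remaining steps are routine heat-semigroup calculus and induction.
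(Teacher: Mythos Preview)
Your proof is correct and follows essentially the same route as the paper: time reversal to a forward problem, mild formulation, the product-rule rewriting $(\na\Phi*u)\cdot\na w=\diver((\na\Phi*u)w)-(\Delta\Phi*u)w$ together with the identity $u\na w=\na(uw)-w\na u$ for the nonlocal term, and a singular Gronwall. The only tactical differences are that the paper shifts the extra derivative in the nonlocal term onto the heat kernel via $\na G$ rather than onto $\Phi$ (so it only uses $\lambda<d-1$ there, whereas your $\Delta\Phi*(uw)$ bound invokes the sub-Coulomb condition $\lambda<d-2$), and that the paper treats all multi-indices $|\alpha|\le k$ simultaneously instead of by induction.
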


\begin{proof}
We transform the backward problem via $w(s,x):=T_\phi^t(t-s,x)$ into the initial-value problem
\begin{align*}
  \pa_s w - \sigma\Delta w - \kappa \big((\na\Phi*u(t-\boldsymbol\cdot))
  \cdot\na w - \na\Phi*(u(t-\boldsymbol\cdot)\na w)\big) &= 0
  \quad \mbox{in }\R^d\times(0,t), \\
  w(0) &= \phi\quad\mbox{in }\R^d.
\end{align*}
We prove only the a priori estimates, since the existence and uniqueness of the linear nonlocal problem can be deduced from them by using standard fixed-point arguments and the superposition principle. Let $\alpha\in\N_0^d$ with $|\alpha|\le k$ be a multi-index. We obtain from the mild formulation that
\begin{align*}
  &\|\mathrm{D}^\alpha w(s)\|_{L^\infty(\R^d)}
  \le \|\mathrm{D}^\alpha\phi\|_{L^\infty(\R^d)} 
  + C(t)\int_0^s (P_4(r) + P_5(r))\dd r,
  \quad\mbox{where}  \\
  & P_4(r) = \|G(s-r,\cdot)*\mathrm{D}^\alpha
  (\na\Phi*u(t-r)\cdot\na w(r))\|_{L^\infty(\R^d)}, \\
  & P_5(r) = \|G(s-r,\cdot)*\na\mathrm{D}^\alpha\Phi
  * (u(t-r)\na w(r))\|_{L^\infty(\R^d)}.
\end{align*}

For the estimate of $P_4(r)$, we observe that
$$
  \na\Phi*u(t-\boldsymbol\cdot)\cdot\na w 
  = \diver\big(\na\Phi*u(t-\boldsymbol\cdot)w\big)
  - (\Delta\Phi*u(t-\boldsymbol\cdot))w.
$$
Then, integrating by parts to remove the divergence, we have
\begin{align*}
  P_4(r) &\le \|\na G(s-r,\boldsymbol\cdot)*\mathrm{D}^\alpha(\na\Phi*u(t-r)w(r))
  \|_{L^\infty(\R^d)} \\
  &\phantom{xx}
  + \|G(s-r,\boldsymbol\cdot)*\mathrm{D}^\alpha(\Delta\Phi*u(t-r)
  w(r))\|_{L^\infty(\R^d)} \\
  &\le \frac{C(|\alpha|)}{|s-r|^{1/2}}\sum_{\ell=0}^{|\alpha|}
  \|\mathrm{D}^\ell\na\Phi*u(t-r)\|_{L^\infty(\R^d)}
  \|\mathrm{D}^{|\alpha|-\ell}w(r)\|_{L^\infty(\R^d)} \\
  &\phantom{xx} + C(|\alpha|)\sum_{\ell=0}^{|\alpha|}
  \|\mathrm{D}^\ell\Delta\Phi*u(t-r)\|_{L^\infty(\R^d)}
  \|\mathrm{D}^{|\alpha|-\ell}w(r)\|_{L^\infty(\R^d)}.
\end{align*}
We infer from
\begin{align*}
  \|\mathrm{D}^\ell\na\Phi*u(t-r)\|_{L^\infty(\R^d)}
  &\le C\|u\|_{L^\infty(0,T;X_{\ell-1})} \le C, \\
  \|\mathrm{D}^\ell\Delta\Phi*u(t-r)\|_{L^\infty(\R^d)}
  &\le C\|u\|_{L^\infty(0,T;X_{\ell})} \le C,
\end{align*}
which can be obtained by similar computations as in Lemma \ref{lem.DkVbaru}, that
\begin{align*}
  P_4(r) \le C(|\alpha|)(|s-r|^{-1/2}+1)
  \|w(r)\|_{W^{|\alpha|,\infty}(\R^d)}.
\end{align*}

Next, taking into account that
$$
  \|\Phi*f\|_{L^\infty(\R^d)} + \|\na\Phi*f\|_{L^\infty(\R^d)}
  \le C(\|f\|_{L^1(\R^d)} + \|f\|_{L^\infty(\R^d)}),
$$
by Lemma \ref{lem.DkVbaru} and $u(t-r)\na w(r) = \na (u(t-r)w(r)) - \na u(t-r) w(r)$, we find that
\begin{align*}
  P_5(r) &\le \|\na G(s-r,\boldsymbol\cdot)*\mathrm{D}^\alpha\Phi
  *\na(u(t-r)w(r))\|_{L^\infty(\R^d)} \\
  &\phantom{xx}+ \|\na G(s-r,\boldsymbol\cdot)*\mathrm{D}^\alpha\Phi
  *(\na u(t-r)w(r))\|_{L^\infty(\R^d)} \\
  &\le \frac{C(|\alpha|)}{|s-r|^{1/2}}\big(\|\mathrm{D}\Phi
  * \mathrm{D}^{|\alpha|}(u(t-r)w(r))\|_{L^\infty(\R^d)} \\
  &\phantom{xx}
  + \|\Phi*\mathrm{D}^{|\alpha|}(\na u(t-r)w(r))\|_{L^\infty(\R^d)}\big)
  \\
  &\le \frac{C(|\alpha|)}{|s-r|^{1/2}}
  \|\mathrm{D}^{|\alpha|}(u(t-r)w(r))\|_{L^1(\R^d)\cap L^\infty(\R^d)} \\
  &\phantom{xx}+ \frac{C(|\alpha|)}{|s-r|^{1/2}}
  \|\mathrm{D}^{|\alpha|}(\na u(t-r)w(r))\|_{L^1(\R^d)
  \cap L^\infty(\R^d)} \\
  &\le \frac{C(|\alpha|)}{|s-r|^{1/2}}
  \|w(r)\|_{W^{|\alpha|,\infty}(\R^d)},
\end{align*}
where $C(|\alpha|)$ also depends on the $L^\infty(0,T;X_{k+1})$ norm of $u$. We sum the previous estimates over $|\alpha|\le k$, which yields
\begin{align*}
  \|w(s)\|_{W^{k,\infty}(\R^d)} \le \|\phi\|_{W^{k,\infty}(\R^d)}
  + C(k)\int_0^s\big(|s-r|^{-1/2}+1\big)
  \|w(r)\|_{W^{k,\infty}(\R^d)}\dd r.
\end{align*}
Gronwall's inequality implies that
$$
  \sup_{0<s<t}\|w(s)\|_{W^{k,\infty}(\R^d)}
  \le C(k,T)\|\phi\|_{W^{k,\infty}(\R^d)},
$$
which ends the proof.
\end{proof}


\end{document}